\pdfoutput=1
\documentclass[a4paper]{article}
\usepackage[T1]{fontenc}
\usepackage[utf8]{inputenc}
\usepackage{graphicx}
\usepackage{authblk}
\usepackage{geometry}
\usepackage{setspace}
\usepackage{verbatim}
\usepackage{ragged2e} 
\usepackage[english]{babel}
\usepackage{amsfonts}
\usepackage{cancel}
\usepackage{latexsym}
\usepackage{amsmath}
\usepackage{amssymb}
\usepackage{caption}
\usepackage{subcaption}
\usepackage{bm}
\usepackage{stmaryrd}
\usepackage{multirow}
\usepackage{indentfirst}
\usepackage{xcolor}
\usepackage{csquotes}
\usepackage{enumerate}
\usepackage[utf8]{inputenc}
\usepackage{enumitem}
\usepackage{siunitx}
\usepackage{amsthm}
\usepackage{float}
\newtheorem{theorem}{Theorem}[section]
\newtheorem{Corollary}{Corollary}[theorem]
\newtheorem{lemma}[theorem]{Lemma}
\newtheorem{Problem}{Abstract Problem}
\newtheorem{Assumption}{Assumption}
\newtheorem{remark}{Remark}
\usepackage[sorting=none]{biblatex}
\addbibresource{references0.bib}

\usepackage{mismath}

\newcommand{\bbeta}{\bm{\beta}}
\newcommand{\bnu}{\bm{\nu}}

\newcommand{\Piti}{{\mathcal{P}_i t_i}}
\newcommand{\Pmiti}{{\mathcal{P}^m_i t_i}}

\newcommand{\chimi}{\chi^{m-1,m}_i}

\newcommand{\Rglo}{\mathcal{R}^\mathup{glo}}
\newcommand{\Rmm}{\mathcal{R}^{m}}
\newcommand{\Rgloi}{\mathcal{R}^\mathup{glo}_i}
\newcommand{\Rmmi}{\mathcal{R}^{m}_i}

\DeclareMathOperator{\Int}{int}
\DeclareMathOperator{\Cl}{cl}
\DeclareMathOperator{\Argmin}{argmin}
\DeclareMathOperator{\Span}{span}

\setcounter{page}{1}
\setlength{\paperwidth}{19cm}\setlength{\paperheight}{29cm}
\setlength{\textwidth}{16cm}\setlength{\textheight}{23cm}
\setlength{\oddsidemargin}{2cm}
\setlength{\headheight}{\baselineskip}
\setlength{\topmargin}{3cm}
\setlength{\footskip}{2cm}\addtolength{\footskip}{.5\baselineskip}
\addtolength{\topmargin}{-1in}
\addtolength{\oddsidemargin}{-1in}
\setlength{\evensidemargin}{\oddsidemargin}

\setlength{\parindent}{1cm}

\newenvironment{Abstract}{
        
	\begin{abstract}
}{
        \end{abstract}
}

\newenvironment{keywords}{
        
	\begin{abstract}
}{
        \end{abstract}
}

\title {Constraint Energy Minimizing Generalized Multiscale Finite Element Method for Convection Diffusion Equations with Inhomogeneous Boundary Conditions}
\author[1]{Po Chai WONG}
\author[1]{Eric T. CHUNG}
\author[1]{Changqing YE}
\author[2]{Lina ZHAO}

\affil[1]{\small Department of Mathematics, The Chinese University of Hong Kong, Hong Kong SAR, China. \textit{\{pcwong, tschung, cqye\}@math.cuhk.edu.hk}}
\affil[2]{\small Department of Mathematics, City University of Hong Kong, Hong Kong SAR, China. \textit{linazha@cityu.edu.hk}}
\usepackage{fancyhdr}
\fancyhf{}

\fancyhead[L]{\small CEM-GMsFEM for Convection Diffusion with Inhomogeneous Boundary Conditions}

\fancyhead[R]{\thepage}
\pagestyle{fancy}
\usepackage{hyperref}
\hypersetup{
  setpagesize  = false,
  colorlinks   = true,    
  urlcolor     = blue,    
  linkcolor    = black,   
  citecolor    = black    
}
\urlstyle{same} 
\begin{document}

\maketitle
\vspace{6pt}

\begin{Abstract}
    
In this paper, we develop the constraint energy minimizing generalized multiscale finite element method (CEM-GMsFEM) for convection-diffusion equations with inhomogeneous Dirichlet, Neumann and Robin boundary conditions, along with high-contrast coefficients. For time independent problems, boundary correctors $\mathcal{D}^m$ and $\mathcal{N}^{m}$ for Dirichlet, Neumann, and Robin conditions are designed. For time dependent problems, a scheme to update the boundary correctors is formulated. Error analysis in both cases is given to show the first-order convergence in energy norm with respect to the coarse mesh size $H$ and second-order convergence in $L^2-$norm, as verified by numerical examples, with which different finite difference schemes are compared for temporal discretization. Nonlinear problems are also demonstrated in combination with Strang splitting. 
\end{Abstract}

\begin{keywords}
constraint energy minimization, multiscale finite element methods, inhomogeneous boundary value problem, convection-diffusion equation
\end{keywords}

\vspace{6pt}
\section{Introduction}
Convection diffusion equation is involved in many physical applications of partial differential equations. Computational difficulty may arise in two-fold: (1) coefficients in high contrast and multiple scales and (2) demanding discretization for a high Péclet number. A lot of multiscale effort has contributed to the problems such as multiscale finite element method \cite{doi:10.1142/S0219876204000071},
variational multiscale method \cite{doi:10.1137/050645646,JOHN20064594,SONG20102226,XIE2021107077}, multiscale discontinuous Galerkin method \cite{KIM20142251,Chung_Leung_2013}, multiscale stablization \cite{chung2018multiscale,CALO2016359}.
In particular, the Generalized Multiscale Finite Element Method (GMsFEM) aims to create multiscale basis to apply Galerkin approximation and it has been applied to an array of partial differential equations \cite{CHUNG201669,doi:10.1137/130926675,CHUNG201454,CHUNG2019298,EFENDIEV2013116,https://doi.org/10.1002/nme.5958,CHEN2020109133}.
Similar to the finite element method, it consists of two stages: first the offline stage where basis functions are generated and used to span the approximation solution manifold; and second the online stage where the actual approximation is found in the generated space. However, due to the complexity of the problem, it is necessary to further reduce the computational cost of the offline stage. 

A spectral decomposition method, Constraint Energy Minimization Generalized Multiscale Finite Element Method (CEM-GMsFEM), is then applied to the scheme to generate a sufficiently large solution manifold by using a few basis functions \cite{chung2023multiscale,cheung2018constraint,CHEUNG2020112960,Chung_2018,FU2020109569,Fu2018ConstraintEM,doi:10.1137/18M1193128,VASILYEVA2019660}.
Each basis function is formed by the eigenfunction in a local spectral problem and captures some information about the medium and velocity, and thereby dependent on the Péclet number. One main important property of such eigenfunction is exponential decay. The span of such basis therefore can capture the cell decaying part of the solution while the non-decaying part is left to be dealt with in the online stage. However, it is only common to see the above results conducted in a homogeneous setting whilst, in practice, inhomogeneous boundary conditions are necessary. We propose a numerical scheme to apply CEM-GMsFEM to solve the convection-diffusion equation with inhomogeneous boundary conditions. There are two versions of CEM-GMsFEM: constrained and relaxed. The relaxed-CEM-GMsFEM is used to develop the error analysis, which takes advantage of the elliptic projection of the solution as a bridge between our multiscale space and the continuous space. An account of relaxed-CEM-GMsFEM applied to some inhomogeneous Boundary Value Problems was given already in \cite{doi:10.1137/21M1459113}. Our goal is to extend this idea to the convection-diffusion equation. We will study three main cases, Dirichlet, Neumann, and Robin conditions. We also give a theoretical account of the convergence analysis and prove the scheme has first-order convergence in energy norm and second-order in $L^2-$norm, as verified by numerical examples.

For non-time independent problems, some work of applying CEM-GMsFEM has been done for parabolic equations \cite{doi:10.1137/18M1193128} on homogeneous conditions. Our goal is to extend our method to the convection-diffusion equation with inhomogeneous boundary conditions. Since we assume the medium and velocity are independent of time, we can reuse the multiscale space in the time-independent case. For time-invariant boundary conditions, the corrector can be pre-computed once. However, for time-variant boundary conditions, the time derivative of the corrector and the boundary conditions need to be taken into account. Therefore, we give a new formulation to update the corrector at each time step in a relaxed CEM-GMsFEM fashion. The error of such approximation is also proved to be exponentially decaying in space. In addition, there are two versions of Backward Euler schemes to compute the next steps of the solution: the diffusion approach (D-approach) and the convection-diffusion approach (CD-approach). The former is less expensive but can be shown to bear a lower accuracy. We compare them via convection diffusion IBVPs and verify that our proposal gives a more accurate result at a higher computational expense. 

Moreover, we introduce a nonlinearity into the IBVPs, which greatly increases the difficulty. A classical approach is to apply the Strang splitting method which considers convection, diffusion, and the nonlinearity terms in separate intermediate steps. This method utilizes the symmetric property of the algorithm to split operators in intermediate timesteps. This has been applied to a variety of problems including parabolic equations \cite{POVEDA2024112796},
diffusion-reaction equations \cite{splitting,GERISCH2002159} 
and diffusion-reaction-advection equations with a homogeneous boundary conditions \cite{10.5555/166337.166347}. 
However, it was numerically tested that the inhomogeneous boundary conditions would drag the overall accuracy of the algorithm \cite{HUNDSDORFER1995191,10.5555/166337.166347,10.1093/imanum/drx047}.
To resolve this, some classical approaches can be found in \cite{10.1093/imanum/drx047,LeVeque1981NumericalMB}.
More recent approaches involve designing a time-dependent boundary corrector in the intermediate steps \cite{doi:10.1137/140994204,doi:10.1137/16M1056250,Einkemmer_2018,doi:10.1137/19M1257081},
which is aligned with our previous sections. In particular, in the classical Strang splitting method applied to convection diffusion equations, the convection and diffusion operators are split and considered separately with their corresponding nonlinear terms \cite{CONNORS2014181}.
Now with our scheme, we can consider them in the same step. We tested our choice of boundary corrector, the same as the previous part, to attain both spatial and temporal convergence. 

The paper is organized as follows: we first give the problem setting and some preliminaries in section 2. The convergence analysis of its application to the time-independent convection diffusion equation is given in section 3, along with numerical results on the Dirichlet, Neumann, and Robin conditions. In section 4, both time-variant and time-invariant IBVPs are presented with analysis and numerical results, along with the comparison of different finite difference schemes for temporal discretization. A demonstration of applying this to nonlinear problems with Strang splitting is presented in this section as well.  
\section{Preliminaries}
\subsection{Problem setting}

We consider the following convection diffusion initial boundary value problem:
\[
\begin{cases}
    \partial_t u + \bbeta(x) \cdot \nabla u = \nabla\cdot(\boldsymbol{A}(x)\nabla u)+f, &\text{ in }\Omega\times (0,T],\\
    u = g(x,t), &\text{ on }\Gamma_D\times (0,T],\\
    b(x) u + \bnu \cdot(\boldsymbol{A}(x)\nabla u -\bbeta(x) u) = q(x,t), &\text{ on }\Gamma_N\times (0,T],\\
    u(\cdot,0)=u_\mathup{init}, &\text{ in }\Omega,
\end{cases}
\]
where $\Omega\subset \mathbb{R}^d$ is the computational domain, $\bbeta\in L^{\infty}(\Omega)^2$, and $0<T<\infty$. 
The medium $\boldsymbol{A}\in L^{\infty}(\Omega;\mathbb{R}^{d\times d})$ and the velocity $\bbeta$ are heterogeneous coefficients with multiple scales and potentially high contrast. $\boldsymbol{A}$ is a positive definite matrix. There exist $\kappa_0$ and $\kappa_1$ such that $0<\kappa_0\leq\lambda_{\min} (\boldsymbol{A}(x))\leq\lambda_{\max}(\boldsymbol{A}(x))\leq\kappa_1$ and $\kappa_1/\kappa_0$ can be large.
We denote $\bnu$ the outward unit normal vectors to $\partial\Omega$ and $\Gamma_D$ and $\Gamma_N$ two nonempty disjoint part of $\partial\Omega$.
We assume the velocity flows inward on the Neumann boundary $\Gamma_N$ and $\bbeta$ is incompressible, i.e.,
$\nabla\cdot\bbeta=0$. i.e., $\bbeta\cdot\bnu \leq 0$ on $\Gamma_N$. Denote $\beta_0\geq 1$ and $\beta_1$ as the infimum and supremum of $|\bbeta|$ respectively. 
The function $b(x)\geq 0$ a.e.\ $x\in\partial\Omega$ and there exists a positive constant $b_0>0$ and a subset $\Gamma \subset \Gamma_N$ with positive measure such that $b(x)\geq b_0$ for a.e.\ $x\in \Gamma$. 
The Dirichlet boundary value term $g\in\mathcal{H}^{1/2}(\Gamma_D\times[0,T])$ and the Neumann boundary value term $q\in L^2(\Gamma_N\times[0,T])$. 

From now on, we denote
\begin{align*}
a(w,v)&=\int_\Omega \boldsymbol{A}\nabla w\cdot \nabla v  + \int_{\Gamma_N}(b-\bbeta\cdot\bnu) wv \di \sigma,\\
\mathcal{A}(w,v)&=\int_\Omega \boldsymbol{A}\nabla w \cdot \nabla v +\int_{\Gamma_N}(b-\bbeta\cdot\bnu) wv \di \sigma + \int_{\Omega}(\bbeta\cdot\nabla w) v . \end{align*}

So the variational form of the problem is: for $t\in (0,T]$, find $u_0(\cdot,t)\in V\coloneqq\{v\in\mathcal{H}^1(\Omega)\colon v=0 \text{ on }\Gamma_D\}$ such that at $t$, 
\[
\begin{aligned}
(\partial_t u_0,v)+\mathcal{A}(u_0,v) &= (f,v) - \mathcal{A}(\widetilde{g},v) + (q,v)_{\Gamma_N} &\text{ for }v\in V\\
(u_0(\cdot,0),v)&=(u_\mathup{init}-\widetilde{g}(\cdot,0),v) &\text{ for }v\in V \end{aligned}
\]
where $\widetilde{g}\in\mathcal{H}^1(\Omega\times[0,T])$ with $\widetilde{g}=g$ on $\Gamma_D$. The solution $u$ to equation \eqref{eqn:BVP} is then $u=u_0+\widetilde{g}$.
\vspace{0.2cm}
\noindent
Denote the energy norms on $V$
\[
\|v\|_{a(\omega)}=\left(\int_\omega \boldsymbol{A}\nabla v\cdot \nabla v  + \int_{\Gamma_N\cap \partial\omega}(b-\bbeta\cdot\bnu) v^2 \di \sigma \right)^{1/2}\text{ and } \|v\|_a = \|v\|_{a(\Omega)}.\]
Also, notice that under the assumptions on  $\bbeta$ and $b$, for $v\in V$,
\begin{align*}
    \mathcal{A}(v,v) &= \int_\Omega \boldsymbol{A}\nabla v\cdot \nabla v + \int_\Omega (\bbeta\cdot \nabla v) v + \int_{\Gamma_N} (b-\bbeta\cdot \bnu ) v^2 \di \sigma\\
    &= \int_\Omega \boldsymbol{A}\nabla v\cdot\nabla v+ \int_{\Gamma_N} \left(b-\frac{1}{2}\bbeta\cdot\bnu\right) v^2 \di \sigma \geq 0.
\end{align*}
So, the following quasi-norms are also well defined on $V$:
\[\|v\|^2_{\mathcal{A}(\omega)} = \int_\omega \boldsymbol{A} \nabla v \cdot \nabla v + \int_{\partial\omega\cap\Gamma_N}(b - \frac{1}{2}\bbeta\cdot\bnu) v^2\di \sigma,\]
for $\omega \subset \Omega$ and $\|v\|_\mathcal{A}=\|v\|_{\mathcal{A}(\Omega)}$.
It is also easy to show that $\|\cdot\|_\mathcal{A}$ and $\|\cdot\|_a$ are equivalent on $V$.

\subsection{Constraint energy minimizing generalized multiscale finite element method}
\subsubsection{Auxiliary basis functions}

We first introduce $\mathcal{T}^H$ to be a conforming partition of $\Omega$ into rectangular elements. 
The resulting elements are called the coarse grids.
Let $H$ be the meshsize of the coarse grid and $N$ be the number of elements. 
For each $K_i\in \mathcal{T}^H$ with $1\leq i \leq N$, we define an oversampled domain $K^m_i$ \cite{doi:10.1137/21M1459113} by
$$K^m_i\coloneqq\Int \left\{\left(\bigcup_{K\in\mathcal{T}^H,
\Cl(K)\cap \Cl(K^{m-1}_i)\neq \emptyset} \Cl(K)\right)\cup \Cl(K^{m-1}_i)\right\}$$
where $\Int(S)$ and $\Cl(S)$ are the interior and the closure of a set $S$. 
We also set $K^0_i\coloneqq K_i$ for the consistency of notations.
Let $N_\nu$ be the number of vertices contained in an element. We can construct a set of Lagrange bases $\{\eta^1_i,\dots, \eta^{N_\nu}_i\}$ of the element $K_i\in\mathcal{T}^H$.
\begin{figure}[ht]
    \centering
    \includegraphics[scale=0.3]{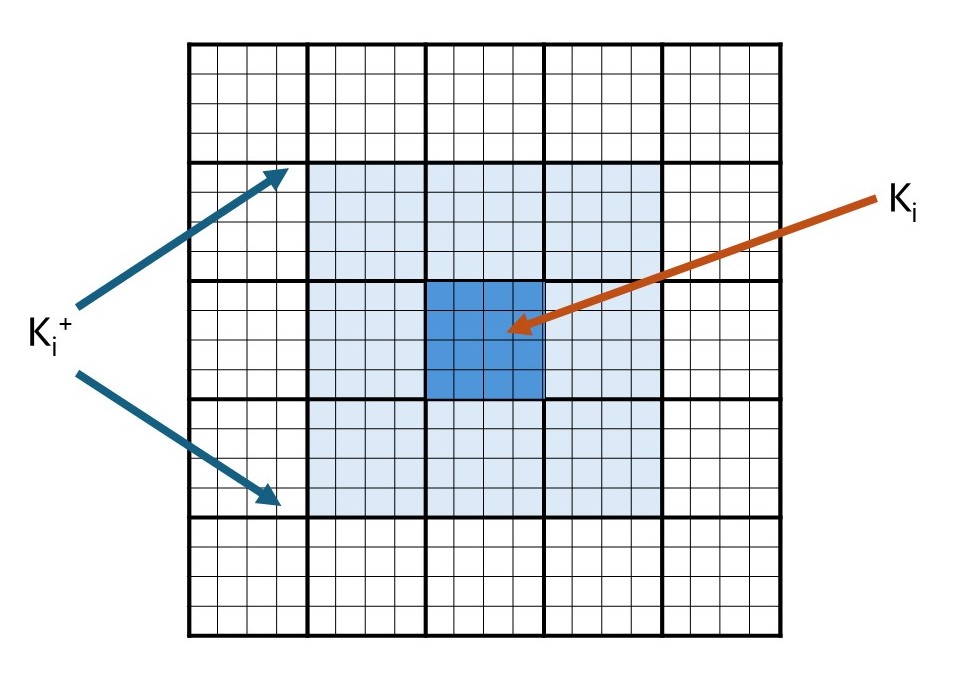}
    \caption{Domain $K_i$ and oversampled domain $K^+_i$}
    \label{fig:domain}
\end{figure}
On each $K_i$, we solve a local spectral problem:
find $\lambda^{j}_i\geq 0$, $\phi^{j}_i\in \mathcal{H}^1(K_i)$
such that for all $v\in\mathcal{H}^1(K_i)$,
\begin{equation}
\mathcal{A}_{(K_i)}(\phi^{j}_i,v)=\lambda^{j}_i s_i(\phi^{j}_i,v),
\label{eq:eigenvalueProblem}\end{equation}
where 
\[ s_i(w,v) = CH^{-2}\int_{K_i} \kappa_1 |\bbeta|^2 wv \coloneqq \int_{K_i}\widetilde{\kappa}wv
\] and $C$ is a constant that depends on the choice of basis $\{\eta_i\}$ and for the local auxiliary space on a structured mesh such that $|\nabla \eta_i|^2 \leq CH^{-2}$.
We take $C = 24$ since we will use the Lagrange polynomials as the basis.  For simplicity, we will omit the constant $C$ in the analysis.
Arrange the eigenvalues $\{\lambda^j_i\}^{\infty}_{j=0}$ in an ascending order.
Define the local auxiliary space $V^\mathup{aux}_i\coloneqq\Span\{\phi^{1}_i,\dots,\phi^{l_i}_i\}$ for some $l_i$. 
Let $s(w,v) = \sum^N_{i=1}s_i(w,v)$ on $V\times V$.
Denote 
$\|w\|_{s(\omega)}=\sqrt{\int_{\omega} \widetilde{\kappa}w^2\di x }$
and 
$\|w\|_{s}=\sqrt{s(w,w)}$.

Now, 
define the orthogonal projection $\pi_i:L^2(K_i)\rightarrow V^\mathup{aux}_i$ by
$$\pi_i(v)\coloneqq\sum_{j=0}^{l_i}\frac{s(\phi^{j}_i,v)}{s(\phi^{j}_i,\phi^{j}_i)}\phi^{j}_i.$$

Let $V^\mathup{aux}\coloneqq\bigoplus^N_{i=1}V^\mathup{aux}_i$. Then $s(\cdot,\cdot)$ and $\|\cdot\|_s$ are an inner product and a norm on $V^\mathup{aux}$ respectively. 
Also $\pi \coloneqq\sum^N_{i=1}\pi_i$ maps $L^2(\Omega)$ to $V^\mathup{aux}$.
Now, we can derive the following lemma \cite{CHUNG2018298}:
\begin{lemma}\label{lemma:eigen_ineq}
    For $v\in\mathcal{H}^1(K_i)$,
    \begin{equation}
    \|v-\pi_i(v)\|_{s(K_i)}^2\leq \frac{\|v\|^2_{a(K_i)}}{\lambda^{l_i+1}_i},
    \label{lemma:eigen}
    \end{equation}
    \begin{equation}
        \|\pi_i(v)\|_{s(K_i)}^2=\|v\|_{s(K_i)}^2
        -\|v-\pi_i(v)\|_{s(K_i)}^2\leq \|v\|_{s(K_i)}^2.
    \label{lemma:auxmap}
    \end{equation}
\end{lemma}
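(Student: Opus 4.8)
The plan is to derive both estimates from the spectral decomposition associated with the local eigenproblem \eqref{eq:eigenvalueProblem}. First I would record the structural facts that make this possible: the form $\mathcal{A}_{(K_i)}(\cdot,\cdot)$ is symmetric and non-negative, with $\mathcal{A}_{(K_i)}(v,v)=\|v\|_{\mathcal{A}(K_i)}^2\le\|v\|_{a(K_i)}^2$ (the last step because $b-\tfrac12\bbeta\cdot\bnu\le b-\bbeta\cdot\bnu$ on $\Gamma_N$, since $\bbeta\cdot\bnu\le0$ there), while $s_i(w,w)=\int_{K_i}\widetilde\kappa w^2$ is a genuine $L^2(K_i)$ inner product because $\widetilde\kappa\ge CH^{-2}\kappa_1\beta_0^2>0$. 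Together with the compact embedding $\mathcal{H}^1(K_i)\hookrightarrow L^2(K_i)$, this makes the solution operator of \eqref{eq:eigenvalueProblem} compact and $s_i$-self-adjoint, so after $s_i$-normalization the eigenfunctions $\{\phi_i^j\}_j$ form a complete $s_i$-orthonormal system in $L^2(K_i)$ that is simultaneously $\mathcal{A}_{(K_i)}$-orthogonal, with the $\lambda_i^j$ listed in ascending order and $\lambda_i^{l_i+1}>0$ as soon as the retained modes span the (at most one-dimensional) kernel of $\mathcal{A}_{(K_i)}$.

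Given this, for $v\in\mathcal{H}^1(K_i)$ I would expand $v=\sum_j c_j\phi_i^j$ with $c_j=s_i(v,\phi_i^j)$, the series converging in $L^2(K_i)$; by the defining formula of $\pi_i$ and $s_i$-orthonormality one has $\pi_i(v)=\sum_{j\le l_i}c_j\phi_i^j$ and $v-\pi_i(v)=\sum_{j>l_i}c_j\phi_i^j$. Then \eqref{lemma:auxmap} is nothing but the Pythagorean identity for $\|\cdot\|_{s(K_i)}$: $\|v\|_{s(K_i)}^2=\sum_j c_j^2=\|\pi_i(v)\|_{s(K_i)}^2+\|v-\pi_i(v)\|_{s(K_i)}^2$, which rearranges to the stated equality and then the inequality since $\|v-\pi_i(v)\|_{s(K_i)}^2\ge0$.

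For \eqref{lemma:eigen} I would bound the tail $\|v-\pi_i(v)\|_{s(K_i)}^2=\sum_{j>l_i}c_j^2$ without assuming that the eigenexpansion converges in the energy norm, using finite truncations instead. For $M>l_i$ put $v_M=\sum_{j\le M}c_j\phi_i^j$; since $\mathcal{A}_{(K_i)}(v,\phi_i^k)=\lambda_i^k s_i(v,\phi_i^k)=\lambda_i^k c_k$ for every $k$, one gets $\mathcal{A}_{(K_i)}(v-v_M,\phi_i^k)=\lambda_i^k c_k-\lambda_i^k c_k=0$ for $k\le M$, hence $v-v_M$ is $\mathcal{A}_{(K_i)}$-orthogonal to $\Span\{\phi_i^1,\dots,\phi_i^M\}$ and $\sum_{j\le M}\lambda_i^j c_j^2=\mathcal{A}_{(K_i)}(v_M,v_M)\le\mathcal{A}_{(K_i)}(v,v)$. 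By the ascending ordering $\lambda_i^j\ge\lambda_i^{l_i+1}$ for $j>l_i$, so $\sum_{l_i<j\le M}c_j^2\le(\lambda_i^{l_i+1})^{-1}\sum_{l_i<j\le M}\lambda_i^j c_j^2\le(\lambda_i^{l_i+1})^{-1}\mathcal{A}_{(K_i)}(v,v)$; letting $M\to\infty$ and using $\mathcal{A}_{(K_i)}(v,v)\le\|v\|_{a(K_i)}^2$ yields \eqref{lemma:eigen}.

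The only step I expect to require genuine care is the spectral-theory setup in the first paragraph — completeness of $\{\phi_i^j\}$ as an $s_i$-orthonormal basis and strict positivity of $\lambda_i^{l_i+1}$; everything afterwards is elementary bookkeeping with orthonormal expansions. It is worth emphasizing that no high-contrast constant enters: the bound is purely local and its only quantitative ingredient is $\lambda_i^{l_i+1}$, which is precisely why the statement has the same form as in the homogeneous-coefficient CEM-GMsFEM of \cite{CHUNG2018298}.
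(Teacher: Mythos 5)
The paper does not actually prove this lemma --- it is imported by citation from \cite{CHUNG2018298}, where the local spectral form is the symmetric diffusion form. So your reconstruction cannot be checked against an in-paper argument; on its own terms it is the standard spectral-decomposition proof, and the parts that are pure bookkeeping (the Pythagorean identity for \eqref{lemma:auxmap}, the truncation argument avoiding energy-norm convergence of the eigenexpansion, the tail bound via $\lambda_i^j\ge\lambda_i^{l_i+1}$ for $j>l_i$, and the final comparison $\|v\|_{\mathcal{A}(K_i)}^2\le\|v\|_{a(K_i)}^2$ using $\bbeta\cdot\bnu\le0$ on $\Gamma_N$) are all correct.

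The one genuine gap is your opening ``structural fact'' that $\mathcal{A}_{(K_i)}(\cdot,\cdot)$ is symmetric with $\mathcal{A}_{(K_i)}(v,v)=\|v\|_{\mathcal{A}(K_i)}^2$. In this paper $\mathcal{A}$ contains the convection term $\int(\bbeta\cdot\nabla w)\,v$, and the authors explicitly remark after Section~\ref{section:multiscale_basis} that $\mathcal{A}$ is \emph{not} symmetric; the same is true of its restriction to $K_i$. Everything downstream in your first paragraph --- real ascending eigenvalues, a complete $s_i$-orthonormal eigenbasis that is simultaneously $\mathcal{A}_{(K_i)}$-orthogonal, the Bessel inequality $\mathcal{A}_{(K_i)}(v_M,v_M)\le\mathcal{A}_{(K_i)}(v,v)$ --- is spectral theory for self-adjoint operators and does not apply to a non-self-adjoint generalized eigenproblem. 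Moreover, on a subdomain the identity $\mathcal{A}_{(K_i)}(v,v)=\|v\|_{\mathcal{A}(K_i)}^2$ fails: integrating $\int_{K_i}(\bbeta\cdot\nabla v)v$ by parts with $\nabla\cdot\bbeta=0$ produces $\tfrac12\int_{\partial K_i}(\bbeta\cdot\bnu)v^2$ over the \emph{entire} element boundary, including interior edges where the sign is indefinite, whereas $\|v\|_{\mathcal{A}(K_i)}^2$ only carries the $\Gamma_N\cap\partial K_i$ contribution. The repair is to read the eigenproblem \eqref{eq:eigenvalueProblem} as posed for the symmetric, non-negative part of the form (the diffusion plus Robin boundary term), which is what the appearance of $\|v\|_{a(K_i)}$ on the right of \eqref{lemma:eigen} and the citation to the symmetric setting of \cite{CHUNG2018298} indicate; under that reading your proof goes through verbatim. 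You should state this reinterpretation explicitly rather than asserting symmetry of the form as the paper defines it.
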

\subsubsection{Multiscale basis functions}
\label{section:multiscale_basis}

Although lemma \ref{lemma:eigen_ineq} has shown that the elliptic projection can approximate any vector $v$ close enough given sufficient number of eigenfunctions used, this function in $V^\mathup{aux}$ may not be continuous in $\Omega$. For this, We now construct the multiscale local basis functions using the auxiliary space $V^\mathup{aux}$. 
Define
\[\psi^{j}_i=\Argmin\left\{\|\psi\|_\mathcal{A}^2+\|\pi\psi-\phi^{j}_i\|_s^2 \colon \psi\in V\right\},\]
\[\psi^{j,m}_i=\Argmin\left\{\|\psi\|_\mathcal{A}^2+\|\pi\psi-\phi^{j}_i\|_s^2 \colon \psi\in V^m_i\right\}.\]
where
\[V^m_i\coloneqq\left\{v\in\mathcal{H}^1(K^m_i)\colon
v=0 \text{ on }\Gamma_D\cap\partial K^m_i\text{ or }\Omega\cap\partial K^m_i\right\}.\]

With these definitions, we can obtain
\begin{equation}
\mathcal{A}(\psi^j_i,v)+s(\pi\psi^j_i,\pi v)=s(\phi^j_i,\pi v),\text{ } \forall v\in V,
\label{eq:psi}\end{equation}
\begin{equation}
\mathcal{A}(\psi^{j,m}_i,v)+s(\pi\psi^{j,m}_i,\pi v)=s(\phi^j_i,\pi v), \text{ }\forall v\in V^m_i.
\label{eq:psi_m}\end{equation}

Denote $V^\mathup{glo}_\mathup{ms}=\Span\{\psi^j_i\colon 0\leq j\leq l_i,1\leq i\leq N\}$ and $V^{m}_\mathup{ms}\coloneqq\Span\{\psi^{j,m}_i\colon 0\leq j\leq l_i,1\leq i\leq N\}$.

\vspace{0.2cm}
\noindent

It is worth-mentioning that the bilinear form $\mathcal{A}$ is not symmetric, and therefore is not an inner product on $V$. However, the following lemma still holds, without stating the proof here for simplicity \cite{CHUNG2018298}.

\begin{lemma}
    Let $v\in V^\mathup{glo}_\mathup{ms}$. Then $\mathcal{A}(v,v^\prime)=0$ for any $v^\prime\in V$ with $\pi v^\prime =0$.
    If there exists $v\in V$ such that $\mathcal{A}(v,v^\prime)=0$ for any $v^\prime\in V^\mathup{glo}_\mathup{ms}$, then $\pi v=0$. 
    \label{lemma:orthogonal}
\end{lemma}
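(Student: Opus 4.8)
The whole argument rests on the variational characterisation \eqref{eq:psi}. By linearity of the minimisation problems defining the $\psi^j_i$, an arbitrary $v\in V^\mathup{glo}_\mathup{ms}$ may be written $v=\sum_{i,j}c^j_i\psi^j_i$, and taking the same linear combination of \eqref{eq:psi} shows that, with the auxiliary ``shadow'' $\bar v\coloneqq\sum_{i,j}c^j_i\phi^j_i\in V^\mathup{aux}$,
\[
\mathcal A(v,w)+s(\pi v,\pi w)=s(\bar v,\pi w)\qquad\text{for all }w\in V .
\]
I would use essentially only this identity; no further structure of $V^\mathup{glo}_\mathup{ms}$ enters. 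The first assertion is then immediate: given $v\in V^\mathup{glo}_\mathup{ms}$ and $v'\in V$ with $\pi v'=0$, setting $w=v'$ makes both $s(\pi v,\pi v')$ and $s(\bar v,\pi v')$ vanish, so $\mathcal A(v,v')=0$.

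For the second assertion, let $v\in V$ satisfy $\mathcal A(v,\psi)=0$ for all $\psi\in V^\mathup{glo}_\mathup{ms}$. Expand $\pi v=\sum_{i,j}\alpha^j_i\phi^j_i$ and put $\psi_v\coloneqq\sum_{i,j}\alpha^j_i\psi^j_i\in V^\mathup{glo}_\mathup{ms}$, whose shadow is precisely $\pi v$; the identity above then gives $\mathcal A(\psi_v,w)+s(\pi\psi_v,\pi w)=s(\pi v,\pi w)$ for all $w\in V$. Testing with $w=v$ and using $\mathcal A(\psi_v,v)=0$ yields $s(\pi\psi_v,\pi v)=\|\pi v\|_s^2$; testing with $w=\psi_v$ and using $\mathcal A(\psi_v,\psi_v)=\|\psi_v\|_\mathcal A^2$ yields $\|\psi_v\|_\mathcal A^2+\|\pi\psi_v\|_s^2=s(\pi v,\pi\psi_v)=\|\pi v\|_s^2$. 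Since $s$ is an inner product, Cauchy--Schwarz gives $\|\pi v\|_s^2=s(\pi\psi_v,\pi v)\le\|\pi\psi_v\|_s\|\pi v\|_s$, hence $\|\pi v\|_s\le\|\pi\psi_v\|_s$; comparing with the previous identity forces $\|\psi_v\|_\mathcal A=0$, so $\psi_v=0$ because $\|\cdot\|_\mathcal A$ is a norm on $V$, so $\pi\psi_v=0$, and finally $\|\pi v\|_s=0$, i.e.\ $\pi v=0$ since $\|\cdot\|_s$ is a norm on $V^\mathup{aux}$.

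The delicate point, which I expect to be the main obstacle, is the step ``testing with $w=v$'': the hypothesis provides $\mathcal A(v,\psi_v)=0$, whereas what is needed there is $\mathcal A(\psi_v,v)=0$, and $\mathcal A$ is not symmetric. Reconciling the two is exactly where the structural assumptions $\nabla\cdot\bbeta=0$ and $\bbeta\cdot\bnu\le0$ on $\Gamma_N$ — the same ones behind $\mathcal A(\zeta,\zeta)=\|\zeta\|_\mathcal A^2\ge0$ — must be invoked; this is handled as in \cite{CHUNG2018298}, and an equivalent device is to repeat the computation with $\psi_v$ replaced by the solution of the corresponding adjoint saddle-point problem. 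Everything else is routine constraint-energy-minimisation algebra.
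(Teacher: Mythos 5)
The paper does not actually prove this lemma (it defers to \cite{CHUNG2018298}), so the only thing to assess is your argument on its own terms. Your proof of the \emph{first} assertion is correct and complete: summing \eqref{eq:psi} over a representation $v=\sum_{i,j}c^j_i\psi^j_i$ and testing with $v'$ kills both $s$-terms when $\pi v'=0$, leaving $\mathcal{A}(v,v')=0$. No issue there.

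The \emph{second} assertion is where your argument has a genuine, unresolved gap --- and you have located it precisely yourself. The identity coming from \eqref{eq:psi} always places the multiscale function in the \emph{first} slot of $\mathcal{A}$, so testing with $w=v$ produces $\mathcal{A}(\psi_v,v)$, whereas the hypothesis controls only $\mathcal{A}(v,\psi_v)$. Neither of the two devices you invoke closes this. The reference \cite{CHUNG2018298} treats a symmetric form, for which the two orderings coincide, so nothing there addresses the convection term. And the assumptions $\nabla\cdot\bbeta=0$, $\bbeta\cdot\bnu\le 0$ only give $\mathcal{A}(w,v)+\mathcal{A}(v,w)=2\langle w,v\rangle_{\mathcal{A}}$ with a positive semidefinite symmetric part; from $\mathcal{A}(v,\psi_v)=0$ this yields $\mathcal{A}(\psi_v,v)=2\langle\psi_v,v\rangle_{\mathcal{A}}$, which is not known to vanish, and inserting it back destroys the chain of identities your Cauchy--Schwarz step relies on. Positive-definiteness of the symmetric part is simply not enough: already in a finite-dimensional model of \eqref{eq:psi} with a nonzero antisymmetric part one can produce $v$ with $\mathcal{A}(v,\cdot)=0$ on the image of the solution operator yet $\pi v\neq 0$, so no amount of ``routine algebra'' will derive the second assertion from \eqref{eq:psi} alone. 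The standard repair --- which you gesture at but do not carry out --- is to introduce adjoint multiscale functions $\psi^{j,*}_i$ solving $\mathcal{A}(w,\psi^{j,*}_i)+s(\pi w,\pi\psi^{j,*}_i)=s(\pi w,\phi^j_i)$ for all $w\in V$, and to state and prove the second assertion with $V^{\mathup{glo},*}_\mathup{ms}=\Span\{\psi^{j,*}_i\}$ as the test space; there your computation (test with $w=v$ and with $w=\psi^*_v$, then Cauchy--Schwarz in $s$) goes through verbatim because $v$ now sits in the first slot throughout. But that proves a modified statement, not the lemma as written, so the gap is substantive rather than cosmetic.
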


\section{Time-independent convection diffusion boundary value problems}
\label{chapter:1}
As the multiscale space $V^{m}_{ms}$ is independent of $t$, the method is applicable to time independent problems. To illustrate the idea, consider
Find $u\in \mathcal{H}^1(\Omega)$ such that
\begin{equation}
\begin{cases}
    -\nabla \cdot(\boldsymbol{A} \nabla u)+ \bbeta\cdot\nabla u = f, &\text{ in }\Omega,\\
    u=g, &\text{ on }\Gamma_D,\\
    b u + \bnu\cdot (\boldsymbol{A} \nabla u- \bbeta u)=q, &\text{ on }\Gamma_N,
\end{cases}
\label{eqn:BVP}
\end{equation}
where $f\in L^2(\Omega)$ is the source term independent of $u$.
\subsection{Derivation of the Method}
\label{section:method_1}

The methods are the following steps:
\begin{enumerate}
    \item Find $\mathcal{D}^m_i \widetilde{g}\in V^m_i$ and $\mathcal{N}^m_i q\in V^m_i$ such that for all $v\in V^m_i$,
    \[\mathcal{A}(\mathcal{D}^m_i\widetilde{g},v)+s(\pi \mathcal{D}^m_i\widetilde{g},\pi v)=\mathcal{A}_{(K_i)}(\widetilde{g},v),\]
    \[\mathcal{A}(\mathcal{N}_i^m q,v)+s(\pi\mathcal{N}_i^m q,\pi v)=\int_{\partial K_i\cap \Gamma_N}qv \di \sigma.\]
    Further denote $\mathcal{D}^m\widetilde{g}=\sum^N_{i=1}\mathcal{D}^m_i\widetilde{g}$ and 
    $\mathcal{N}^m q=\sum^N_{i=1}\mathcal{N}_i^m q$.

    \item Construct the multiscale function space $V^m_\mathup{ms}$ according to equation \eqref{eq:psi_m}.
    \item Solve $w^m\in V^m_\mathup{ms }$ such that for all $v\in V^m_\mathup{ms}$,
    \begin{equation}
        \mathcal{A}(w^m,v)=(f,v)-\mathcal{A}(\widetilde{g},v)+\int_{\Gamma_N}qv \di \sigma
        +\mathcal{A}(\mathcal{D}^m\widetilde{g},v)-\mathcal{A}(\mathcal{N}^m q,v).
        \label{eq:w^m}
    \end{equation}

    \item Construct the numerical solution $u^\mathup{ms}$ to approximate the actual solution $u$ of equation \eqref{eqn:BVP} by
    \[u^\mathup{ms}_0 = w^m-\mathcal{D}^m\widetilde{g}+\mathcal{N}^m q\text{  and  } u^\mathup{ms}\approx u^\mathup{ms}_0+\widetilde{g}.\]
\end{enumerate}

\subsection{Analysis}
\label{section:analysis_1}

We will use the following notation:
for $w,v\in V$ and $\omega \subset \Omega$,
\[\mathcal{B}_{(\omega)}(w,v) = \mathcal{A}_{(\omega)}(w,v)+s_{(\omega)}(\pi w,\pi v)\]
and 
$\|v\|_{\mathcal{B}(\omega)} \coloneqq \sqrt{\mathcal{B}_{(\omega)}(v,v)}$.
Before we give an account of the analysis, we will summarize all the quasi-norms in the following lemma,
\begin{lemma} For any $v\in V$,
    \[
    \|v\|_{L^2}\leq Hc_\#\|v\|_\mathcal{B},
    \]
    \[
    \mathcal{A}(w,v)\leq \overline{C}\|w\|_\mathcal{A}\|v\|_\mathcal{B},
    \]
    \[
    \mathcal{B}(w,v)\lesssim\overline{C}\|w\|_\mathcal{B}\|v\|_\mathcal{B},
    \]
    where $\Lambda =\min_{i}\lambda^{l_i+1}_i$, $
    c_\# \coloneqq \beta_0^{-1}\kappa_1^{-1/2}\sqrt{1+\Lambda^{-1}}
    \text{, }\overline{C}\coloneqq \sqrt{2}(1+\Lambda^{-1/2}) \max\left\{1, \frac{H}{\sqrt{C\kappa_0\kappa_1}}\right\}$\text{ and }$\lesssim$ is "not greater than up to a positive constant".
    \label{lemma:overline_C}
\end{lemma}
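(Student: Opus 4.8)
The plan is to reduce all three bounds, by a term-by-term Cauchy--Schwarz, to a single auxiliary estimate: for every $v\in V$ the $s$-mass is controlled by the $\mathcal{B}$-norm, i.e.\ $\|v\|_s^2\lesssim (1+\Lambda^{-1})\,\|v\|_{\mathcal{B}}^2$. I would obtain this from Lemma~\ref{lemma:eigen_ineq}: on each coarse block split $\|v\|_{s(K_i)}^2=\|\pi_i v\|_{s(K_i)}^2+\|v-\pi_i v\|_{s(K_i)}^2$, bound the second summand by $\Lambda^{-1}\|v\|_{a(K_i)}^2$ (first inequality of the lemma together with $\lambda^{l_i+1}_i\ge\Lambda$), and sum over $i$; the cross terms vanish because the $V^{\mathup{aux}}_i$ are supported on disjoint blocks, so $\sum_i\|\pi_i v\|_{s(K_i)}^2=\|\pi v\|_s^2$, while $\sum_i\|v\|_{a(K_i)}^2=\|v\|_a^2\simeq\|v\|_{\mathcal{A}}^2$. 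Since $\|v\|_{\mathcal{B}}^2=\|v\|_{\mathcal{A}}^2+\|\pi v\|_s^2$, this is exactly the auxiliary estimate.

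The first inequality is then immediate: pointwise $\widetilde{\kappa}=CH^{-2}\kappa_1|\bbeta|^2\ge CH^{-2}\kappa_1\beta_0^2$ (recall $\beta_0\ge 1$), hence $\|v\|_{L^2(K_i)}^2\le \tfrac{H^2}{C\kappa_1\beta_0^2}\|v\|_{s(K_i)}^2$; summing over $i$ and inserting the auxiliary estimate gives $\|v\|_{L^2}^2\le \tfrac{H^2}{C\kappa_1\beta_0^2}(1+\Lambda^{-1})\|v\|_{\mathcal{B}}^2$, which is the claim with $c_\#=\beta_0^{-1}\kappa_1^{-1/2}\sqrt{1+\Lambda^{-1}}$ once $C$ is suppressed.

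For the second inequality I would expand $\mathcal{A}(w,v)=\int_\Omega\boldsymbol{A}\nabla w\cdot\nabla v+\int_{\Gamma_N}(b-\bbeta\cdot\bnu)wv\,\di\sigma+\int_\Omega(\bbeta\cdot\nabla w)v$ and estimate the three pieces separately, since $\mathcal{A}$ is not symmetric and cannot be handled by a single Cauchy--Schwarz. The first two pieces make up $a(w,v)$, which Cauchy--Schwarz in the $\boldsymbol{A}$-weighted volume inner product and in the $(b-\bbeta\cdot\bnu)$-weighted surface inner product bounds by $\|w\|_a\|v\|_a\lesssim\|w\|_{\mathcal{A}}\|v\|_{\mathcal{A}}$. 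For the convection piece, write $\bbeta\cdot\nabla w=(\boldsymbol{A}^{-1/2}\bbeta)\cdot(\boldsymbol{A}^{1/2}\nabla w)$, so that $\int_\Omega(\bbeta\cdot\nabla w)v\le\big(\int_\Omega\boldsymbol{A}\nabla w\cdot\nabla w\big)^{1/2}\big(\int_\Omega\boldsymbol{A}^{-1}\bbeta\cdot\bbeta\,v^2\big)^{1/2}\le\kappa_0^{-1/2}\|w\|_{\mathcal{A}}\,\||\bbeta|v\|_{L^2}$; then note $\||\bbeta|v\|_{L^2}^2=\sum_i\int_{K_i}|\bbeta|^2v^2=\tfrac{H^2}{C\kappa_1}\|v\|_s^2$, so this piece is at most $\tfrac{H}{\sqrt{C\kappa_0\kappa_1}}\|w\|_{\mathcal{A}}\|v\|_s$. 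Substituting the auxiliary estimate $\|v\|_s\lesssim\sqrt{1+\Lambda^{-1}}\,\|v\|_{\mathcal{B}}$ and collecting constants (using $\sqrt{1+\Lambda^{-1}}\le 1+\Lambda^{-1/2}$ and the $\|\cdot\|_a\simeq\|\cdot\|_{\mathcal{A}}$ comparison, with $\max\{1,H/\sqrt{C\kappa_0\kappa_1}\}$ recording which of the two contributions dominates) yields the second inequality with the stated $\overline{C}$.

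The third inequality follows from the second: $\mathcal{B}(w,v)=\mathcal{A}(w,v)+s(\pi w,\pi v)$, where $\mathcal{A}(w,v)\le\overline{C}\|w\|_{\mathcal{A}}\|v\|_{\mathcal{B}}\le\overline{C}\|w\|_{\mathcal{B}}\|v\|_{\mathcal{B}}$ (using $\|w\|_{\mathcal{A}}\le\|w\|_{\mathcal{B}}$) and $s(\pi w,\pi v)\le\|\pi w\|_s\|\pi v\|_s\le\|w\|_{\mathcal{B}}\|v\|_{\mathcal{B}}$ (Cauchy--Schwarz for $s$ together with $\|\pi v\|_s\le\|v\|_{\mathcal{B}}$); adding and absorbing the extra additive $1$ into $\overline{C}\ge\sqrt 2$ gives the result. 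The only genuinely delicate point in the whole argument is the convection term in the second inequality: it has no coercivity of its own and must be absorbed by trading the $|\bbeta|$-weighted $L^2$-mass of $v$ (an $s$-norm up to the factor $H^2/(C\kappa_1)$) against $\|v\|_{\mathcal{B}}$ through the spectral gap $\Lambda$, and tracking the $\kappa_0,\kappa_1,\beta_0,H,\Lambda$ dependence carefully through that trade and through the $a$-versus-$\mathcal{A}$ comparison on $\Gamma_N$ is where the real bookkeeping lies.
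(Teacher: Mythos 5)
Your proposal is correct and follows essentially the same route as the paper's own proof: both reduce everything to the spectral estimate $\|v\|_s^2=\|\pi v\|_s^2+\|v-\pi v\|_s^2\le(1+\Lambda^{-1})\|v\|_{\mathcal B}^2$ from Lemma~\ref{lemma:eigen_ineq}, split $\mathcal A(w,v)$ into the symmetric part $a(w,v)$ plus the convection term handled by an $\boldsymbol{A}$-weighted Cauchy--Schwarz against the $s$-mass of $v$, and obtain the $\mathcal B$-bound by adding $s(\pi w,\pi v)\le\|\pi w\|_s\|\pi v\|_s$. Your write-up is simply more explicit about the $\kappa_0,\kappa_1,\beta_0$ bookkeeping (and correctly uses $\Lambda^{-1/2}$ where the paper's displayed chain has a typo $\Lambda^{1/2}$), but there is no substantive difference in approach.
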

\begin{proof}
    First,
    \begin{align*}
        \|v\|_{L^2}\leq H\beta_0^{-1}\kappa_1^{-1/2}\|v\|_s
        \leq H\beta_0^{-1}\kappa_1^{-1/2}(1+\Lambda^{-1})^{1/2}\|v\|_\mathcal{B}.
    \end{align*}
    On the other hand,
    \begin{align*}
        \mathcal{A}(w,v) &= \int\boldsymbol{A}\nabla w\cdot\nabla v + \int\bbeta\cdot\nabla w v + \int_{\partial \Omega} (b-\frac{1}{2}\bbeta\cdot\bnu)wv\di \sigma\\
        &\leq \|w\|_a \|v\|_a + H\sqrt{\int \boldsymbol{A}\nabla w \cdot\nabla w}
        \sqrt{\int \widetilde{\kappa}v^2}\\
        &\leq \|w\|_a\|v\|_a + H\|w\|_a(\|\pi v\|_s + \|v-\pi v\|_s)\\
        &\leq \|w\|_a\|v\|_a + H\|w\|_a (\|\pi v\|_s + \Lambda^{1/2}\|v\|_a)\\
        &\leq \overline{C}\|w\|_\mathcal{A}\|v\|_\mathcal{B}.
    \end{align*}
    Finally,
    \begin{align*}
        \mathcal{B}(w,v) &= \mathcal{A}(w,v)+ s(\pi w, \pi v)\\
        &\leq \overline{C}\|w\|_\mathcal{A}\|v\|_\mathcal{B}+\|\pi w\|_s \|\pi v\|_s\\
        &\lesssim \overline{C}\|w\|_\mathcal{B}\|v\|_\mathcal{B}.
    \end{align*}
\end{proof}

With the above lemmas, we can now start our error analysis.

\subsubsection{Global Approximations}

We will approximate the error using the global approximation $w^\mathup{glo}$ of the real solution via $V^\mathup{glo}_\mathup{ms}$.
Define $\mathcal{D}^\mathup{glo}\widetilde{g}=\sum^N_{i=1}\mathcal{D}^\mathup{glo}_i\widetilde{g}$ and $\mathcal{N}^\mathup{glo} q=\sum^N_{i=1}\mathcal{N}^\mathup{glo}_i q$ where $\mathcal{D}^\mathup{glo}_i\widetilde{g}\in V^\mathup{glo}_\mathup{ms}$ satisfies that for all $v\in V$,
\begin{equation}\mathcal{B}(\mathcal{D}^\mathup{glo}_i\widetilde{g},v) = \mathcal{A}(\widetilde{g},v), 
\label{eq:Dgtg}\end{equation}
and $\mathcal{N}^\mathup{glo}_i q\in V^\mathup{glo}_\mathup{ms}$ satisfies that for all $v\in V$,
\begin{equation}\mathcal{B}(\mathcal{N}^\mathup{glo}_i q,v)=\int_{\partial K_i\cap \Gamma_N} q v \di \sigma.
\label{eq:Ngq}\end{equation}

Also define $w^\mathup{glo}\in V^\mathup{glo}_\mathup{ms}$ such that,
\begin{equation}
    \mathcal{A}(w^\mathup{glo},v) - \mathcal{A}(\mathcal{D}^\mathup{glo}\widetilde{g}, v) + \mathcal{A}(\mathcal{N}^\mathup{glo} q, v) = \mathcal{A}(u_0,v) \text{ for } v\in V^\mathup{glo}_\mathup{ms};
    \label{eq:w^glo_u0}
\end{equation}

We first show the convergence of the elliptic projection defined using $V^\mathup{glo}_\mathup{ms}$.
\begin{theorem}
    Let $\mathcal{D}^\mathup{glo}_i\widetilde{g}$, $\mathcal{N}^\mathup{glo}_i q$ and $w^\mathup{glo}$ defined by equations \eqref{eq:Dgtg}, \eqref{eq:Ngq} and \eqref{eq:w^glo_u0}. 
    Let $u$ be the actual solution of the problem \eqref{eqn:BVP}. 
    Let $\widetilde{u}_0$ $=w^\mathup{glo}$ $-\mathcal{D}^\mathup{glo}\widetilde{g}$ $+\mathcal{N}^\mathup{glo} q$ and $\widetilde{u} = \widetilde{u}_0+\widetilde{g}$. 
    Then
    \begin{equation}
        \|\widetilde{u}-u\|_\mathcal{A}\leq\Lambda^{-1/2}\kappa_1^{-1/2}H\left(\||\bbeta|^{-1}f\|_{L^2(\Omega)}+\|\nabla u_0\|_{L^2(\Omega)}\right).
        \label{eq:global bound}
    \end{equation}
    \label{thm:1}
\end{theorem}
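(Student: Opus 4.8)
The plan is to express the error $\widetilde u - u = \widetilde u_0 - u_0$ as an element of $V$ whose $\pi$-image vanishes, and then invoke the orthogonality structure (Lemma \ref{lemma:orthogonal}) together with the spectral estimate of Lemma \ref{lemma:eigen_ineq} to control its energy norm. First I would record that, by construction, $\mathcal{D}^\mathup{glo}\widetilde g$, $\mathcal{N}^\mathup{glo} q$ and $w^\mathup{glo}$ all lie in $V^\mathup{glo}_\mathup{ms}$, so $\widetilde u_0 \in V^\mathup{glo}_\mathup{ms}$; subtracting \eqref{eq:w^glo_u0} appropriately combined with \eqref{eq:Dgtg} and \eqref{eq:Ngq} should give that $\widetilde u_0$ is exactly the elliptic-type projection of $u_0$ onto $V^\mathup{glo}_\mathup{ms}$, i.e.\ $\mathcal{A}(\widetilde u_0, v) = \mathcal{A}(u_0, v)$ for all $v \in V^\mathup{glo}_\mathup{ms}$ (after using that $\mathcal{B}$ restricted against test functions with $\pi v = 0$ reduces to $\mathcal{A}$, and that $s(\pi(\cdot),\pi v)$ terms cancel). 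Hence $\mathcal{A}(\widetilde u_0 - u_0, v) = 0$ for all $v \in V^\mathup{glo}_\mathup{ms}$.

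Next I would show $\pi(\widetilde u_0 - u_0) = 0$. Since $u_0$ itself need not satisfy this, the argument instead goes the other way: by the second part of Lemma \ref{lemma:orthogonal}, from $\mathcal{A}(\widetilde u_0 - u_0, v) = 0$ for all $v \in V^\mathup{glo}_\mathup{ms}$ we would like to conclude $\pi(\widetilde u_0 - u_0) = 0$ — but that requires $\widetilde u_0 - u_0 \in V$, which holds, so the lemma applies directly and yields $\pi(\widetilde u_0 - u_0) = 0$. With $e := \widetilde u_0 - u_0$ satisfying $\pi e = 0$, I then apply \eqref{lemma:eigen} on each coarse block $K_i$: $\|e\|_{s(K_i)}^2 = \|e - \pi_i e\|_{s(K_i)}^2 \le \lambda_i^{-(l_i+1)} \|e\|_{a(K_i)}^2 \le \Lambda^{-1}\|e\|_{a(K_i)}^2$, and summing over $i$ gives $\|e\|_s^2 \le \Lambda^{-1}\|e\|_a^2$, equivalently $\|e\|_a \le \Lambda^{1/2}\|e\|_s$ is the wrong direction — rather $\|e\|_s \le \Lambda^{-1/2}\|e\|_a$.

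To close the estimate I would test the error equation against $e$ itself. Using $\mathcal{A}(e,e) = \|e\|_\mathcal{A}^2$ (the skew part drops since $\nabla\cdot\bbeta = 0$ and the boundary sign condition), and the Galerkin orthogonality $\mathcal{A}(\widetilde u_0 - u_0, v) = 0$ for $v\in V^\mathup{glo}_\mathup{ms}$, plus the fact that $u_0$ solves the continuous variational problem so $\mathcal{A}(u_0, v) = (f,v) - \mathcal{A}(\widetilde g, v) + (q,v)_{\Gamma_N}$, I would rewrite $\|e\|_\mathcal{A}^2 = \mathcal{A}(e, e) = -\mathcal{A}(u_0, e) + \mathcal{A}(\widetilde u_0, e)$ and, since $e$ has $\pi e = 0$ I can insert any element of $V^\mathup{glo}_\mathup{ms}$ freely; the cleanest route is $\|e\|_\mathcal{A}^2 = \mathcal{A}(u_0 - \widetilde u_0, u_0 - e)$ type manipulation, reducing to bounding $(f, e) + (\text{lower order})$ against $\|e\|_\mathcal{A}$. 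Then Cauchy–Schwarz with the weighted $s$-norm, $|(f,e)| \le \||\bbeta|^{-1} f\|_{L^2} \cdot \||\bbeta| e\|_{L^2} \lesssim \kappa_1^{-1/2} H \||\bbeta|^{-1}f\|_{L^2} \|e\|_s$ (using the definition of $\widetilde\kappa = C H^{-2}\kappa_1|\bbeta|^2$), followed by $\|e\|_s \le \Lambda^{-1/2}\|e\|_a \lesssim \Lambda^{-1/2}\|e\|_\mathcal{A}$, and a parallel bound for the $\nabla u_0$ contribution, would yield \eqref{eq:global bound} after dividing by $\|e\|_\mathcal{A}$.

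The main obstacle I anticipate is the bookkeeping in the middle step: verifying precisely that $\widetilde u_0 - u_0$ is $\mathcal{A}$-orthogonal to $V^\mathup{glo}_\mathup{ms}$ requires carefully reconciling the three defining relations \eqref{eq:Dgtg}, \eqref{eq:Ngq}, \eqref{eq:w^glo_u0} (which are posed with the $\mathcal{B}$-form or $\mathcal{A}$-form against different test spaces) and tracking where the $s(\pi\cdot,\pi\cdot)$ terms cancel — in particular, that the correctors $\mathcal{D}^\mathup{glo}\widetilde g$ and $\mathcal{N}^\mathup{glo} q$, defined via $\mathcal{B}$ against all of $V$, still interact correctly when only tested against $V^\mathup{glo}_\mathup{ms}$. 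A secondary technical point is getting the constant $\kappa_1^{-1/2}H$ exactly right from $s_i$'s definition, and confirming the direction of all inequalities coming out of Lemma \ref{lemma:eigen_ineq}.
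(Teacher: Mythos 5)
Your proposal is correct and follows essentially the same route as the paper: Galerkin orthogonality from \eqref{eq:w^glo_u0} together with the second part of Lemma \ref{lemma:orthogonal} gives $\pi e=0$, after which the first part of that lemma eliminates $\mathcal{A}(w^\mathup{glo},e)$, the corrector relations \eqref{eq:Dgtg}--\eqref{eq:Ngq} (whose $s$-terms vanish because $\pi e=0$) cancel against the continuous weak form, and what remains is bounded by the weighted Cauchy--Schwarz estimate and Lemma \ref{lemma:eigen_ineq} exactly as you describe. The one caution is that your remark about ``inserting any element of $V^\mathup{glo}_\mathup{ms}$ freely'' is only valid in the \emph{first} argument of the nonsymmetric form $\mathcal{A}$ (via Lemma \ref{lemma:orthogonal}, part one), which happens to be precisely the slot where it is needed here.
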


\begin{proof}
    Let $e=u-\widetilde{u}$. By the definition of $\widetilde{u}$, we have $\mathcal{A}(u-\widetilde{u},v)=\mathcal{A}(u_0-\widetilde{u}_0,v)=0$ for $v\in V^\mathup{glo}_\mathup{ms}$, giving $\pi e = \pi(u-\widetilde{u})=0$ by equations \eqref{eq:Dgtg} and \eqref{eq:Ngq}. This leads to
    \[\mathcal{A}(\mathcal{D}^\mathup{glo}\widetilde{g},e)=\mathcal{A}(\widetilde{g},e) \text{ and } \mathcal{A}(\mathcal{N}^\mathup{glo} q,e)=\int_{\Gamma_N}qe\di \sigma.\]
    Since $\widetilde{w}\in V^\mathup{glo}_\mathup{ms}$, then $\mathcal{A}(\widetilde{w},v) =0$ for $\pi v =0$.
    \begin{align*}
        \|e\|_\mathcal{A}^2 =\mathcal{A}(e,e) 
        &= -\mathcal{A}(\widetilde{w},e) + \mathcal{A}(\mathcal{D}^\mathup{glo}\widetilde{g},e) - \mathcal{A}(\mathcal{N}^\mathup{glo} q,e) + \mathcal{A}(u_0,e)\\
        &= \mathcal{A}(\mathcal{D}^\mathup{glo}\widetilde{g},e) - \mathcal{A}(\mathcal{N}^\mathup{glo} q,e) + \left\{ \int_\Omega fe- \mathcal{A}(\widetilde{g},e)-\int_\Omega\bbeta\cdot\nabla u_0 e\right.
        \left.+\int_{\Gamma_N} qv \di \sigma\right\}\\
        &=  \int_\Omega fe - \int_\Omega \bbeta\cdot\nabla u_0 e\\
        &\leq \|\widetilde{\kappa}^{-1/2}f\|_{L^2(\Omega)}\|e\|_s
        +\kappa_1^{-1/2}H \|\nabla u_0\|_{L^2(\Omega)}\|e\|_s\\
        &\leq \Lambda^{-1/2}\left(\|\widetilde{\kappa}^{-1/2}f\|_{L^2(\Omega)}+\kappa_1^{-1/2}H \|\nabla u_0\|_{L^2(\Omega)}\right)\|e\|_\mathcal{A}.
    \end{align*}

    Hence, we obtain
    \[\|u-\widetilde{u}\|_\mathcal{A} \leq \Lambda^{-1/2}\kappa_1^{-1/2}H\left(\||\bbeta|^{-1}f\|_{L^2(\Omega)}+\|\nabla u_0\|_{L^2(\Omega)}\right).\]
\end{proof}
\noindent
\subsubsection{Abstract Problem}

Following the approach in \cite{doi:10.1137/21M1459113}, we summarise the analysis of CEM-GMsFEM by considering the following abstract problem:
\begin{Problem}
    Let $K_i\in \mathcal{T}^H$ and $t_i\in V^\prime$ such that $\left<t_i,v\right>=0$ for any $v\in V$ with $supp(v)\subset \Omega\backslash K_i$.
    Define $\mathcal{P}_i:V^\prime\rightarrow V$ such that for all $v\in V$,
    \begin{equation}
        \mathcal{B}(\Piti,v)=\left<t_i,v\right>
        \label{eq:piti}
    \end{equation}
    and $\mathcal{P}^m_i:V^\prime\rightarrow V^m_i$ with
    \begin{equation}
        \mathcal{B}(\Pmiti,v)=\left<t_i,v\right>.
        \label{eq:pmiti}
    \end{equation}

We aim to estimate 
    \[\left\|\sum^N_{i=1}\Piti-\Pmiti\right\|_\mathcal{B}^2=\left\|\sum^N_{i=1}\Piti-\Pmiti\right\|_\mathcal{A}^2
    +\left\|\sum^N_{i=1}\Piti-\Pmiti\right\|_s^2.\]
\end{Problem}

To solve this problem, we prepare ourselves with the following lemmas \ref{lemma:part1}, \ref{lemma:part2} and \ref{lemma:part3}. 
To prove them, we need to define cutoff functions $\{\chi_i^{n,m}\}$.
Let $V^H$ be the Lagrange basis function space of $\mathcal{T}^H$. For $K_i\in\mathcal{T}^H$, 
a cutoff function $\chi^{n,m}_i\in V^H$ with $n<m$ satisfies that:
\[\chi^{n,m}_i(x)=1\text{ in }K^n_i;\]
\[\chi^{n,m}_i(x)=0\text{ in }\Omega\backslash K^m_i;\]
\[0\leq\chi^{n,m}_i\leq 1\text{ in }K^m_i\backslash K^n_i.\]

\begin{lemma}
    Let $m\geq 1$. Then there exists $0<\theta<1$ such that
    \[\|\Piti\|_{\mathcal{B}(\Omega\backslash K^m_i)}^2
    \leq \theta^m\|\Piti\|_{\mathcal{B}}^2\]
    where 
    $\theta=\frac{c_*}{c_*+1}$
    and
    \[c_*(\Lambda,\beta_0)=
    \max_{x\in[0,\pi/2]}((\beta_0^{-1}+\kappa_0^{-1/2}\kappa_1^{-1/2}H)\cos(x)+\sin(x))
    \left(\Lambda^{-1/2}\cos(x)+\sin(x)\right).\]
    \label{lemma:part1}
\end{lemma}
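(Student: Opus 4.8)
The plan is to establish the exponential decay of $\Piti$ by the standard iteration-over-oversampling-layers argument, using the cutoff functions $\chi^{n,m}_i$ to localize. Fix $i$ and $m\geq 1$. The key idea is to compare $\Piti$, which solves the global problem \eqref{eq:piti}, against a function supported in $K^m_i$ built from $\Piti$ itself via a cutoff. First I would set $\xi = \chi^{m-1,m}_i$ and consider the test function $v = (1-\xi)\,\Piti \in V$. Since $\xi = 1$ on $K^{m-1}_i$, this test function vanishes on $K^{m-1}_i$; and since $\mathrm{supp}(v)\cap K_i = \emptyset$ (as $m\geq 1$ means $K_i\subset K^{m-1}_i$), the functional acts trivially: $\langle t_i, v\rangle = 0$. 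Plugging into \eqref{eq:piti} gives $\mathcal{B}(\Piti, (1-\xi)\Piti) = 0$, hence
\[
\mathcal{B}(\Piti,\Piti) = \mathcal{B}(\Piti, \xi\,\Piti).
\]

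Next I would control the right-hand side. Writing out $\mathcal{B} = \mathcal{A} + s(\pi\cdot,\pi\cdot)$ and using that $\xi$ is supported in $K^m_i\setminus K^{m-2}_i$ (more precisely, $\nabla\xi$ is supported in $K^m_i\setminus K^{m-1}_i$), one localizes: the bulk terms involving $\nabla(\xi\Piti) = \xi\nabla\Piti + \Piti\nabla\xi$ and the product-rule cross terms from $\bbeta\cdot\nabla$ are all supported in $\Omega\setminus K^{m-2}_i$, except for a piece that is genuinely $\|\cdot\|_{\mathcal{B}(\Omega\setminus K^{m-1}_i)}^2$ up to the cutoff. The boundary and $\pi$-projection terms need the observation that $\pi$ is local (each $\pi_j$ sees only $K_j$), so $\pi(\xi\Piti)$ differs from $\xi\,\pi(\Piti)$ only on coarse elements meeting the support of $\nabla\xi$. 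The gradient-of-cutoff terms are estimated by $|\nabla\xi|\leq CH^{-1}$ together with the definitions of $\widetilde\kappa = CH^{-2}\kappa_1|\bbeta|^2$ and the energy norms; this is exactly where the constants $\beta_0^{-1}$, $\kappa_0^{-1/2}\kappa_1^{-1/2}H$, and $\Lambda^{-1/2}$ (via Lemma \ref{lemma:eigen_ineq}, bounding $\|v-\pi v\|_s$ by $\Lambda^{-1/2}\|v\|_a$) enter and get packaged into $c_*$. The outcome should be an inequality of the shape
\[
\|\Piti\|_{\mathcal{B}}^2 \;\leq\; c_*\,\bigl(\|\Piti\|_{\mathcal{B}(\Omega\setminus K^{m-1}_i)}^2\bigr)^{1/2}\,\|\Piti\|_{\mathcal{B}}
\quad\text{or, after Cauchy--Schwarz,}\quad
\|\Piti\|_{\mathcal{B}(\Omega\setminus K^{m-1}_i)}^2 \;\geq\; \frac{1}{c_*}\,\|\Piti\|_{\mathcal{B}(\Omega\setminus K^{m}_i)}^2 .
\]
Here the trigonometric $\max_{x\in[0,\pi/2]}$ form of $c_*$ comes from optimizing the split of a sum $a\cos x + b\sin x$ type bound when balancing the $\nabla\xi$-term against the remaining energy — a Young's-inequality-with-angle trick.

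From the one-step recursion I would derive $\|\Piti\|_{\mathcal{B}(\Omega\setminus K^m_i)}^2 \leq \frac{c_*}{c_*+1}\|\Piti\|_{\mathcal{B}(\Omega\setminus K^{m-1}_i)}^2$: indeed, setting $X_m = \|\Piti\|_{\mathcal{B}(\Omega\setminus K^m_i)}^2$ and $Y_{m} = \|\Piti\|_{\mathcal{B}(K^m_i\setminus K^{m-1}_i)}^2$ so that $X_{m-1} = X_m + Y_m$, the estimate above reads $X_{m-1}\geq c_*^{-1}X_m$, equivalently $X_m \leq \frac{c_*}{c_*+1}(X_m+Y_m) = \frac{c_*}{c_*+1}X_{m-1}$. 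Iterating $m$ times and bounding $X_0 \leq \|\Piti\|_{\mathcal{B}}^2$ yields $\|\Piti\|_{\mathcal{B}(\Omega\setminus K^m_i)}^2 \leq \theta^m \|\Piti\|_{\mathcal{B}}^2$ with $\theta = \frac{c_*}{c_*+1}\in(0,1)$, which is the claim.

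I expect the main obstacle to be the bookkeeping in the localization step: the bilinear form $\mathcal{A}$ is nonsymmetric, so $\mathcal{A}(\Piti,\xi\Piti)\neq\mathcal{A}(\xi\Piti,\Piti)$, and the convective term $\int_\Omega(\bbeta\cdot\nabla\Piti)(\xi\Piti)$ does not obviously reduce to a sign-definite boundary term after the cutoff is inserted — one must carefully integrate by parts, use $\nabla\cdot\bbeta = 0$ and $\bbeta\cdot\bnu\leq 0$ on $\Gamma_N$, and track that the "error" from $\nabla\xi$ lives only in the annulus $K^m_i\setminus K^{m-1}_i$. Combined with the non-locality of $\pi$ at the cutoff interface, getting the clean constant $c_*$ in exactly the stated trigonometric form is the delicate part; everything else is the routine Caccioppoli/iteration machinery.
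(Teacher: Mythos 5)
Your proposal is essentially the paper's own proof: same test function $v=(1-\chimi)\Piti$ in \eqref{eq:piti} with $\langle t_i,v\rangle=0$, same sign arguments for the boundary and $(\chimi-1)$ terms, the same three Cauchy--Schwarz estimates producing the constants $\beta_0^{-1}$, $\kappa_0^{-1/2}\kappa_1^{-1/2}H$ and $\Lambda^{-1/2}$, the same trigonometric packaging into $c_*$, and the same $X_{m-1}=X_m+Y_m$ iteration. (One small note: the paper does not integrate by parts on the convective term; $I_5$ is bounded directly by Cauchy--Schwarz against $\|\cdot\|_s$ on the annulus, which is where the $\kappa_0^{-1/2}\kappa_1^{-1/2}H$ factor comes from.)

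The one thing to repair is your displayed one-step estimate. As written, both versions
\[
\|\Piti\|_{\mathcal{B}}^2 \leq c_*\bigl(\|\Piti\|_{\mathcal{B}(\Omega\setminus K^{m-1}_i)}^2\bigr)^{1/2}\|\Piti\|_{\mathcal{B}}
\quad\text{and}\quad
\|\Piti\|_{\mathcal{B}(\Omega\setminus K^{m-1}_i)}^2 \geq \tfrac{1}{c_*}\|\Piti\|_{\mathcal{B}(\Omega\setminus K^{m}_i)}^2
\]
compare the tail at level $m$ with the (larger) tail at level $m-1$; since $c_*$ need not be below $1$, neither yields a contraction. The inequality the localization actually produces, and the one your subsequent algebra silently uses, is
\[
\|\Piti\|_{\mathcal{B}(\Omega\setminus K^{m}_i)}^2 \leq c_*\,\|\Piti\|_{\mathcal{B}(K^{m}_i\setminus K^{m-1}_i)}^2,
\]
with the \emph{annulus} on the right. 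Only then does $X_{m-1}=X_m+Y_m\geq(1+c_*^{-1})X_m$ give $X_m\leq\frac{c_*}{c_*+1}X_{m-1}$ and the stated $\theta^m$ decay. With that display corrected, the argument matches the paper's.
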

\begin{proof}
    
    Note that $1-\chimi\equiv 0$ in $K^{m-1}_i$
    and $1-\chimi\equiv 1$ in $\Omega\backslash K_i$,
    implying that $supp(1-\chimi)\subset \Omega\backslash K_i^{m-1}$.
    Put $v\coloneqq\left(1-\chimi\right)\Piti$ in equation \eqref{eq:piti}, namely
    \begin{align*}
        \mathcal{A}(\Piti,(1-\chimi)\Piti)+ s(\pi\Piti,\pi(1-\chimi)\Piti) &= 0.\\
        \end{align*}
    Then,
    \begin{align*}
        &\quad \|\Piti\|_{\mathcal{A}(\Omega\backslash K^m_i)}^2+\|\pi\Piti\|_{s(\Omega\backslash K^m_i)}^2 \\
        &=
        \int_{K^m_i\backslash K^{m-1}_i}\left(\chimi-1\right)\boldsymbol{A} \nabla\Piti\cdot\nabla\Piti \di x+\int_{K^m_i\backslash K^{m-1}_i}\Piti\boldsymbol{A}\nabla\Piti\cdot\nabla\chimi \di x\\
        &\quad +\int_{K^m_i\backslash K^{m-1}_i} H^{-2}\kappa_1|\bbeta|^2 \pi(\Piti)\cdot
        \pi\left(\left(\chimi-1\right)\Piti\right) \di x +\int_{\Gamma_N\backslash\partial K^m_i}(-b+\bbeta\cdot\bnu) \Piti^2 \\
        &\quad + \int_{K^m_i\backslash K^{m-1}_i}\bbeta\cdot\nabla \Piti (\chimi-1)\Piti\\
        &\eqqcolon I_1 + I_2 + I_3 + I_4 + I_5.
    \end{align*}
    Since $\chimi-1\leq 0$ in $K^m_i\backslash K^{m-1}_i$, then $I_1\leq 0$.
    Also, since $\bbeta\cdot\bnu < b$ on $\Gamma_N$, we have $I_4\leq 0$.\\
    For $I_2$, by using the Cauchy Schwartz inequality,
    \begin{align*}
        &\quad \int_{K^m_i\backslash K^{m-1}_i}\Piti\boldsymbol{A}\nabla\Piti\cdot\nabla\chimi\\
        &\leq \sqrt{\int_{K^m_i\backslash K^{m-1}_i}\boldsymbol{A}\nabla\Piti\cdot\nabla\Piti} \, \sqrt{\int_{K^m_i\backslash K^{m-1}_i}\left(\Piti\right)^2 \boldsymbol{A}\nabla\chimi\cdot\nabla\chimi \, \di x}\\
        &\leq \beta_0^{-1}\left\|\Piti\right\|_{a(K^m_i\backslash K^{m-1}_i)}
        \|\Piti\|_{s(K^m_i\backslash K^{m-1}_i)}.
\end{align*}
    The last inequality is due to the assumptions on $\boldsymbol{A}$ and $\bbeta$,
    \begin{align*}
    \beta_0^2\int_{K^m_i\backslash K^{m-1}_i} \Piti^2 \boldsymbol{A}\nabla\chimi\cdot\nabla\chimi
    \leq
    \int_{K^m_i\backslash K^{m-1}_i} \Piti^2 |\bbeta|^2 CH^{-2}\kappa_1
    = \|\Piti\|_{s(K^m_i\backslash K^{m-1}_i)}^2.    
    \end{align*}
    For $I_3$,
    \begin{align*}
        &\quad \int_{K^m_i\backslash K^{m-1}_i}H^{-2}\kappa_1|\bbeta|^2 \pi(\Piti)\cdot
        \pi\left(\left(\chimi-1\right)\Piti\right) \di x\\
        &\leq \|\pi\Piti\|_{s(K^m_i\backslash K^{m-1}_i)}\|\pi((\chimi-1)\Piti)\|_{s(K^m_i\backslash K^{m-1}_i)}\\
        &\leq \|\pi\Piti\|_{s(K^m_i\backslash K^{m-1}_i)}\|(\chimi-1)\Piti\|_{s(K^m_i\backslash K^{m-1}_i)}\\
        &\leq \|\pi\Piti\|_{s(K^m_i\backslash K^{m-1}_i)}\|\Piti\|_{s(K^m_i\backslash K^{m-1}_i)}.
    \end{align*}
    For $I_5$,
    \begin{align*}
        &\quad \int_{K^m_i\backslash K^{m-1}_i} \bbeta\cdot\nabla\Piti (\chimi-1)\Piti\\
        &\leq \kappa_0^{-1/2}\kappa_1^{-1/2}H\sqrt{\int_{K^m_i\backslash K^{m-1}_i} \boldsymbol{A}\nabla \Piti\cdot\nabla\Piti} \sqrt{\int_{K^m_i\backslash K^{m-1}_i}|\bbeta|^2 H^{-2}\kappa_1 |\Piti|^2}\\
        &\leq \kappa_0^{-1/2}\kappa_1^{-1/2}H \|\Piti\|_{a(K^m_i\backslash K^{m-1}_i)}\|\Piti\|_{s(K^m_i\backslash K^{m-1}_i)}.
    \end{align*}
    By inequality \eqref{lemma:eigen},
    \begin{align*}
        \|\Piti\|_{s(K^m_i\backslash K^{m-1}_i)}
        &\leq \|\Piti-\pi\Piti\|_{s(K^m_i\backslash K^{m-1}_i)}+\|\pi\Piti\|_{s(K^m_i\backslash K^{m-1}_i)}\\
        &\leq \Lambda^{-1/2}\|\Piti\|_{a(K^m_i\backslash K^{m-1}_i)}+\|\pi\Piti\|_{s(K^m_i\backslash K^{m-1}_i)}.
    \end{align*}
    So, in summary,
    \begin{align*}
        \|\Piti\|_{\mathcal{B}(\Omega\backslash K^m_i)}^2
        &\leq \left((\beta_0^{-1}+\kappa_0^{-1/2}\kappa_1^{-1/2}H)\left\|\Piti\right\|_{a(K^m_i\backslash K^{m-1}_i)}+\|\pi\Piti\|_{s(K^m_i\backslash K^{m-1}_i)}\right)\|\Piti\|_{s(K^m_i\backslash K^{m-1}_i)}\\
        &\leq c_*(\Lambda,\beta_0)\|\Piti\|^2_{\mathcal{B}{(K^m_i\backslash K^{m-1}_i)}}.
    \end{align*}
    The last inequality comes from considering
    \[\cos(\theta) = \frac{\|\Piti\|_{\mathcal{A}(K^m_i\backslash K^{m-1}_i)}}{\|\Piti\|_{\mathcal{B}(K^m_i\backslash K^{m-1}_i)}} \text{ and }\sin(\theta)=\frac{\|\pi\Piti\|_{s(K^m_i\backslash K^{m-1}_i)}}{\|\Piti\|_{\mathcal{B}(K^m_i\backslash K^{m-1}_i)}}\]
    for some $\theta\in(0,\pi/2]$. Now,
    \begin{align*}
        \|\Piti\|^2_{\mathcal{B}{(K^{m-1}_i)}}
        &= \|\Piti\|^2_{\mathcal{B}(K^m_i\backslash K^{m-1}_i)}+\|\Piti\|^2_{\mathcal{B}(\Omega\backslash K^{m}_i)}\\
        &\geq \left(1+\frac{1}{c_*}\right)
        \|\Piti\|_{\mathcal{B}(\Omega\backslash K^{m-1}_i)}^2.
    \end{align*}
    Then, iteratively we can obtain
    \[\|\Piti\|_{\mathcal{B}(\Omega\backslash K^m_i)}^2
    \leq \theta^m\|\Piti\|_{\mathcal{B}}^2.\]
\end{proof}
\begin{lemma}
    With the notations in lemma \ref{lemma:part1},
    then
    \[\|\Piti-\Pmiti\|_{\mathcal{B}}^2
    \leq \overline{C}^2 c_\star\theta^{m-1}\|\Piti\|_{\mathcal{B}}^2,\]
    where
    \begin{align*}&c_\star(\Lambda,\beta_0)
    \coloneqq\max_{x\in[0,\pi/2]}
    \left[(1+\beta^{-1}_0\Lambda^{-1/2})\cos(x)
    +\beta^{-1}_0\sin(x)\right]^2
    +\left[\Lambda^{-1/2}\cos(x)+\sin(x)\right]^2.
    \end{align*}
    \label{lemma:part2}
\end{lemma}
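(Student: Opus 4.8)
The plan is to bound the difference $\Piti - \Pmiti$ by testing the defining equations \eqref{eq:piti} and \eqref{eq:pmiti} against a suitable function, following the standard CEM-GMsFEM device of introducing a cutoff. First I would subtract the two equations: for $v \in V^m_i$ we have $\mathcal{B}(\Piti - \Pmiti, v) = 0$, so $\Pmiti$ is the $\mathcal{B}$-orthogonal projection of $\Piti$ onto $V^m_i$ (note $\mathcal{B}$ \emph{is} symmetric and positive definite even though $\mathcal{A}$ is not, since the skew part of $\mathcal{A}$ vanishes on the diagonal — this is exactly the point of working with $\mathcal{B}$ rather than $\mathcal{A}$). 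Hence $\|\Piti - \Pmiti\|_{\mathcal{B}}^2 \le \|\Piti - z\|_{\mathcal{B}}^2$ for \emph{any} $z \in V^m_i$. The natural choice is $z = \chi^{m-1,m}_i \Piti$, which lies in $V^m_i$ because the cutoff vanishes outside $K^m_i$ (and on the relevant boundary pieces, matching the definition of $V^m_i$).

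Then I would estimate $\|\Piti - \chimi\Piti\|_{\mathcal{B}}^2 = \|(1-\chimi)\Piti\|_{\mathcal{B}}^2$. Since $1-\chimi \equiv 0$ on $K^{m-1}_i$ and $\equiv 1$ on $\Omega \setminus K^m_i$, this splits as a contribution on $\Omega\setminus K^m_i$, where the factor is $\Piti$ itself, plus a contribution on the annulus $\Kmipart$, where one pays derivative terms of $\chimi$. The first contribution is exactly $\|\Piti\|_{\mathcal{B}(\Omega\setminus K^m_i)}^2$, which by Lemma \ref{lemma:part1} is $\le \theta^m \|\Piti\|_{\mathcal{B}}^2$. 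For the annulus term I would expand $\|(1-\chimi)\Piti\|_{\mathcal{B}(\Kmipart)}^2$ using $\nabla((1-\chimi)\Piti) = (1-\chimi)\nabla\Piti - \Piti\nabla\chimi$, then control $|\nabla\chimi|$ by $CH^{-2}$ (the $\widetilde\kappa$/$s$-weight, as in the proof of Lemma \ref{lemma:part1}), the projection term via \eqref{lemma:auxmap}, and combine the $a$- and $s$-seminorm pieces on the annulus using the same $\cos\theta/\sin\theta$ parametrization that produced $c_*$; this is where the constant $c_\star$ with its optimization over $x\in[0,\pi/2]$ and the factor $\overline{C}^2$ will emerge, with the $\Lambda^{-1/2}$ coming from \eqref{lemma:eigen} to pass between $\|\cdot\|_s$ and $\|\cdot\|_a$ on the annulus. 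Finally I would observe $\|\Piti\|_{\mathcal{B}(\Kmipart)}^2 \le \|\Piti\|_{\mathcal{B}(\Omega\setminus K^{m-1}_i)}^2 \le \theta^{m-1}\|\Piti\|_{\mathcal{B}}^2$ by Lemma \ref{lemma:part1} again, and absorb the $\theta^m$ term from $\Omega\setminus K^m_i$ into the (larger) $\theta^{m-1}$ term, yielding the claimed bound.

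The main obstacle I anticipate is the bookkeeping in the annulus estimate: getting precisely the stated $c_\star$ requires carefully isolating which terms are $a$-seminorm of $\Piti$ on $\Kmipart$, which are $s$-norm of $\pi\Piti$ there, and which are the full $s$-norm of $\Piti$ (convertible to $a$-seminorm plus $\pi$-part via \eqref{lemma:eigen}), then recognizing the resulting quadratic form in $(\cos\theta,\sin\theta)$ and maximizing. One must also be slightly careful that the convective term $\int \bbeta\cdot\nabla\Piti\,(1-\chimi)^2\Piti$ and the Neumann boundary term on $\Gamma_N\setminus\partial K^m_i$ are handled with the right signs — but since $\|\cdot\|_{\mathcal{B}}$ only sees the symmetric parts (the $\mathcal{A}(v,v)$ identity from Section 2 kills the convective skew part, and $b - \tfrac12\bbeta\cdot\bnu \ge 0$), these are nonnegative and pose no sign trouble; the factor $\overline{C}$ accounts for the cross terms via Lemma \ref{lemma:overline_C}. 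The rest is routine Cauchy–Schwarz and triangle-inequality manipulation.
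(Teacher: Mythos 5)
Your overall architecture matches the paper's: the decomposition $\Piti-\Pmiti=(1-\chimi)\Piti+\bigl[(\chimi-1)\Pmiti+\chimi(\Piti-\Pmiti)\bigr]$ (your choice $z=\chimi\Piti$ reproduces exactly this splitting into $z_i'$ and $z_i''$), the Galerkin orthogonality on $V^m_i$, the annulus estimate producing $c_\star$, and the final appeal to Lemma \ref{lemma:part1}. But there is a genuine error at the pivotal step: you assert that $\mathcal{B}$ is symmetric because ``the skew part of $\mathcal{A}$ vanishes on the diagonal,'' and from this you conclude that $\Pmiti$ is the $\mathcal{B}$-orthogonal projection of $\Piti$ and hence that $\|\Piti-\Pmiti\|_{\mathcal{B}}^2\leq\|\Piti-z\|_{\mathcal{B}}^2$ with constant $1$. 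The vanishing of the skew part on the diagonal only gives $\mathcal{B}(v,v)\geq 0$, i.e.\ that $\|\cdot\|_{\mathcal{B}}$ is a quasi-norm; it does not make $\mathcal{B}(w,v)=\mathcal{B}(v,w)$. The convective term $\int_\Omega(\bbeta\cdot\nabla w)\,v$ is genuinely asymmetric, and the paper explicitly notes that $\mathcal{A}$ is not an inner product. Consequently neither the orthogonal-projection characterization of $\Pmiti$ nor the Cauchy--Schwarz inequality $\mathcal{B}(w,v)\leq\|w\|_{\mathcal{B}}\|v\|_{\mathcal{B}}$ is available, and the constant-$1$ quasi-optimality fails.

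The repair is the C\'ea-type argument the paper actually uses: Galerkin orthogonality gives $\mathcal{B}(z_i,z_i'')=0$, so $\|z_i\|_{\mathcal{B}}^2=\mathcal{B}(z_i,z_i')\leq\overline{C}\|z_i\|_{\mathcal{B}}\|z_i'\|_{\mathcal{B}}$ by the continuity bound of Lemma \ref{lemma:overline_C}, whence $\|z_i\|_{\mathcal{B}}^2\leq\overline{C}^2\|z_i'\|_{\mathcal{B}}^2$. This is where the factor $\overline{C}^2$ in the statement originates --- not from the annulus bookkeeping, where your sketch places it. The annulus estimate alone yields $\|z_i'\|_{\mathcal{B}}^2\leq c_\star\|\Piti\|_{\mathcal{B}(\Omega\backslash K^{m-1}_i)}^2$, which combined with Lemma \ref{lemma:part1} supplies the $c_\star\theta^{m-1}$ factor. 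With this accounting corrected, the remainder of your plan (the $\cos$/$\sin$ parametrization, the use of inequality \eqref{lemma:eigen} to trade $\|\cdot\|_s$ for $\|\cdot\|_a$ plus $\|\pi\cdot\|_s$, and the sign observations for the convective and Robin terms) goes through essentially as you describe.
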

\begin{proof}
    Let $z_i\coloneqq\Piti-\Pmiti$ and decompose it as 
    \[z_i=\left\{\left(1-\chimi\right)\Piti\right\}
    +\left\{\left(\chimi-1\right)\Pmiti+\chimi z_i\right\}
    \eqqcolon z^\prime_i+z^{\prime\prime}_i.\]
    By definition, $z^{\prime\prime}_i\in V^m_i$, so by equations \eqref{eq:piti} and \eqref{eq:pmiti},
    $\mathcal{B}(z_i,z_i^{\prime\prime})=0$.
    Then,
    \begin{align*}
        \|z_i\|_\mathcal{B}^2
        =\mathcal{B}(z_i,z_i^\prime)
        \leq \overline{C}\|z_i\|_\mathcal{B}\|z_i^\prime\|_{\mathcal{B}}
        \leq \overline{C}^2 \|z_i^\prime\|_\mathcal{B}^2.
    \end{align*}
    To compute $\|z^\prime_i\|_\mathcal{B}$, we investigate $\|z^\prime_i\|_\mathcal{A}$ and $\|z_i^\prime\|_s$.
    \begin{align*}
        \|z_i^\prime\|_\mathcal{A}^2
        &=\|\left(1-\chimi\right)\Piti\|_\mathcal{A}^2 \lesssim \|\left(1-\chimi\right)\Piti\|_a^2\\
        &\leq \int_{\Omega\backslash K^{m-1}_i} (1-\chimi)^2 \boldsymbol{A}\nabla \Piti\cdot\nabla\Piti - 2\int_{\Omega\backslash K^{m-1}_i} \Piti (1-\chimi)\boldsymbol{A}\nabla\Piti\cdot\nabla\chimi \\
        &\quad +\int_{K^m_i\backslash K^{m-1}_i}\Piti^2 \boldsymbol{A}\nabla\chimi\cdot\nabla\chimi +\int_{\Gamma_N\cap \partial K^m_i} (b-\bbeta\cdot\bnu) \Piti^2 \di \sigma \\
        &\leq \beta^{-2}_0\|\Piti\|_{s(K^m_i\backslash K^{m-1}_i)}^2
        +2\beta^{-1}_0\|\Piti\|_{s(\Omega\backslash K^{m-1}_i)}\|\Piti\|_{a(\Omega\backslash K^{m-1}_i)}
        +\|\Piti\|_{a(\Omega\backslash K^{m-1}_i)}^2\\
        &=\left(\beta^{-1}_0\|\Piti\|_{s(K^m_i\backslash K^{m-1}_i)}+
        \|\Piti\|_{a(\Omega\backslash K^{m-1}_i)}\right)^2\\
        &=\left(\beta^{-1}_0\|\Piti\|_{s(K^m_i\backslash K^{m-1}_i)}+
        \|\Piti\|_{\mathcal{A}(\Omega\backslash K^{m-1}_i)}\right)^2.
    \end{align*}
    Again, by inequality \eqref{lemma:eigen},    
    \begin{align*}
        \|z_i^\prime\|_\mathcal{A}&\leq \beta^{-1}_0\|\Piti\|_{s(K^m_i\backslash K^{m-1}_i)}+
        \|\Piti\|_{\mathcal{A}(\Omega\backslash K^{m-1}_i)}\\
        &=\left(\beta^{-1}_0\Lambda^{-1/2}+1\right)\|\Piti\|_{\mathcal{A}(\Omega\backslash K^{m-1}_i)}+\beta^{-1}_0\|\pi \Piti\|_{s(\Omega\backslash K^{m-1}_i)}.
    \end{align*}
    Also,
    \begin{align*}
        \|\pi z_i^\prime\|_s^2&=
        \left\|\pi\left(\left(1-\chimi\right)\Piti\right)\right\|_s \leq\left\|\left(1-\chimi\right)\Piti\right\|_s\\
        &\leq \left\|\Piti\right\|_{s(\Omega\backslash K^{m-1}_i)} \leq \Lambda^{-1/2}\|\Piti\|_{\mathcal{A}(\Omega\backslash K^{m-1}_i)}
        +\|\pi \Piti\|_{s(\Omega\backslash K^{m-1}_i)}.
    \end{align*}
    Therefore, we can obtain
    \begin{align*}
        \|z_i\|_{\mathcal{A}}^2+\|\pi z_i\|_s^2
        &\leq \overline{C}^2 \|z_i^\prime\|_\mathcal{B}^2\\
        &\leq \overline{C}^2\left\{\left[\left(\beta^{-1}_0\Lambda^{-1/2}+1\right)\|\Piti\|_{\mathcal{A}(\Omega\backslash K^{m-1}_i)}+\beta^{-1}_0\|\pi \Piti\|_{s(\Omega\backslash K^{m-1}_i)}\right]^2 \right.\\
        &\quad +\left.(\Lambda^{-1/2}\|\Piti\|_{\mathcal{A}(\Omega\backslash K^{m-1}_i)}
        +\|\pi \Piti\|_{s(\Omega\backslash K^{m-1}_i)})^2\right\}\\
        &\leq \overline{C}^2 c_\star \|\Piti\|_{\mathcal{B}(\Omega\backslash K^{m-1}_i)}^2.
    \end{align*}
    Thus by lemma \ref{lemma:part1},
    \begin{align*}
     \|\Piti-\Pmiti\|_\mathcal{B}^2
    \leq \overline{C}^2 c_\star\theta^{m-1}\|\Piti\|_\mathcal{B}^2.
    \end{align*}
\end{proof}
\begin{Assumption}
    There exists a constant $C_\mathup{ol}>0$ such that for all $K_i\in\mathcal{T}^H$ and $m>0$,
    \[\#\{K\in\mathcal{T}^H\colon K\subset K^m_i\}
    \leq C_\mathup{ol}m^d.\]
\end{Assumption}

\begin{lemma}
With the notations in lemmas \ref{lemma:part1} and \ref{lemma:part2}, then
\[
\begin{aligned}
    \left\|\sum^N_{i=1}\Piti-\Pmiti\right\|_\mathcal{B}^2\leq \overline{C}^4 C_\mathup{ol} c_\star^3 (m+1)^d \theta^{m-1}\sum^N_{i=1}\left<t_i,\Piti\right>.
\end{aligned}\]
\label{lemma:part3}
\end{lemma}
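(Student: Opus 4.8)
The plan is to estimate $\|R\|_{\mathcal{B}}^{2}$ for $R\coloneqq\sum_{i=1}^{N}(\Piti-\Pmiti)$ by reusing the decomposition already set up for Lemma~\ref{lemma:part2} and then summing over $i$ with a shell argument. First I would observe that taking $v=\Piti$ in \eqref{eq:piti} gives $\langle t_i,\Piti\rangle=\mathcal{B}(\Piti,\Piti)=\|\Piti\|_{\mathcal{B}}^{2}$, so the target is $\|R\|_{\mathcal{B}}^{2}\lesssim C_\mathup{ol}(m+1)^{d}\theta^{m-1}\sum_i\|\Piti\|_{\mathcal{B}}^{2}$ with the constant $\overline{C}^{4}c_\star^{3}$. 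Write $z_i\coloneqq\Piti-\Pmiti=z^{\prime}_i+z^{\prime\prime}_i$ with $z^{\prime}_i=(1-\chimi)\Piti$ (supported in $\Omega\setminus K^{m-1}_i$) and $z^{\prime\prime}_i=\chimi\Piti-\Pmiti\in V^m_i$ (supported in $K^m_i$), exactly as in the proof of Lemma~\ref{lemma:part2}; there one already has $\mathcal{B}(z_i,v)=0$ for all $v\in V^m_i$, together with $\|z^{\prime}_i\|_{\mathcal{B}}^{2}\le c_\star\|\Piti\|_{\mathcal{B}(\Omega\setminus K^{m-1}_i)}^{2}\le c_\star\theta^{m-1}\|\Piti\|_{\mathcal{B}}^{2}$ (the last step by Lemma~\ref{lemma:part1}) and $\|z_i\|_{\mathcal{B}}^{2}\le\overline{C}^{2}c_\star\theta^{m-1}\|\Piti\|_{\mathcal{B}}^{2}$, whence $\|z^{\prime\prime}_i\|_{\mathcal{B}}^{2}\lesssim\overline{C}^{2}c_\star\theta^{m-1}\|\Piti\|_{\mathcal{B}}^{2}$. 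Since $\|\cdot\|_{\mathcal{B}}$ is a norm on $V$, the triangle inequality reduces the task to bounding $\|\sum_i z^{\prime\prime}_i\|_{\mathcal{B}}$ and $\|\sum_i z^{\prime}_i\|_{\mathcal{B}}$ separately.

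For the first sum I would invoke finite overlap. The quadratic form $v\mapsto\|v\|_{\mathcal{B}}^{2}=\|v\|_{\mathcal{A}}^{2}+\|\pi v\|_{s}^{2}$ is symmetric, positive semidefinite, and assembled from element-local integrals except for $s(\pi\cdot,\pi\cdot)$, which is block-diagonal over the coarse elements; it therefore satisfies $\|\sum_i f_i\|_{\mathcal{B}}^{2}\le\kappa\sum_i\|f_i\|_{\mathcal{B}}^{2}$ whenever the supports of the $f_i$, enlarged by one coarse layer, have overlap at most $\kappa$. Each $z^{\prime\prime}_i$ is supported in $K^m_i$, and Assumption~1 together with the symmetry of the oversampling construction shows that every point of $\Omega$ lies in $\lesssim C_\mathup{ol}(m+1)^{d}$ of the $K^m_i$; hence $\|\sum_i z^{\prime\prime}_i\|_{\mathcal{B}}^{2}\lesssim C_\mathup{ol}(m+1)^{d}\sum_i\|z^{\prime\prime}_i\|_{\mathcal{B}}^{2}\lesssim C_\mathup{ol}(m+1)^{d}\,\overline{C}^{2}c_\star\theta^{m-1}\sum_i\|\Piti\|_{\mathcal{B}}^{2}$.

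The second sum is the heart of the matter and the step I expect to be the main obstacle: $z^{\prime}_i$ is \emph{not} compactly supported, so a crude overlap count would only replace $(m+1)^{d}$ by the useless factor $N$. The remedy is to peel off annular shells and use the exponential decay of Lemma~\ref{lemma:part1}. For $\ell\ge 0$ put $\rho^{\ell}_i\coloneqq\chi^{m-1+\ell,m+\ell}_i$ and $\rho^{-1}_i\coloneqq 0$, and set $\xi^{\ell}_i\coloneqq(\rho^{\ell}_i-\rho^{\ell-1}_i)z^{\prime}_i$; then $z^{\prime}_i=\sum_{\ell\ge 0}\xi^{\ell}_i$ (a finite sum, since $\rho^{\ell}_i\equiv 1$ once $K^{m-1+\ell}_i=\Omega$), $\xi^{\ell}_i$ is supported in the annulus $K^{m+\ell}_i\setminus K^{m+\ell-2}_i$, and the cutoff estimate underlying Lemma~\ref{lemma:part2} (applied with cutoff $\rho^{\ell}_i-\rho^{\ell-1}_i$) followed by Lemma~\ref{lemma:part1} gives $\|\xi^{\ell}_i\|_{\mathcal{B}}^{2}\lesssim c_\star\|z^{\prime}_i\|_{\mathcal{B}(\Omega\setminus K^{m+\ell-3}_i)}^{2}\lesssim c_\star^{2}\,\theta^{\,m+\ell-3}\|\Piti\|_{\mathcal{B}}^{2}$. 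For each fixed $\ell$ the family $\{\xi^{\ell}_i\}_{i}$ has bounded overlap $\lesssim C_\mathup{ol}(m+\ell+1)^{d}$ (Assumption~1 again), so $\|\sum_i\xi^{\ell}_i\|_{\mathcal{B}}^{2}\lesssim C_\mathup{ol}(m+\ell+1)^{d}c_\star^{2}\theta^{\,m+\ell-3}\sum_i\|\Piti\|_{\mathcal{B}}^{2}$; summing in $\ell$ with the triangle inequality, using $(m+\ell+1)^{d/2}\le(m+1)^{d/2}(\ell+1)^{d/2}$ and the convergence of $\sum_{\ell\ge 0}(\ell+1)^{d/2}\theta^{\ell/2}$, yields $\|\sum_i z^{\prime}_i\|_{\mathcal{B}}^{2}\lesssim C_\mathup{ol}(m+1)^{d}c_\star^{2}\theta^{m-1}\sum_i\|\Piti\|_{\mathcal{B}}^{2}$.

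Finally I would combine the two pieces, $\|R\|_{\mathcal{B}}^{2}\le 2\|\sum_i z^{\prime\prime}_i\|_{\mathcal{B}}^{2}+2\|\sum_i z^{\prime}_i\|_{\mathcal{B}}^{2}\lesssim\overline{C}^{2}c_\star^{2}C_\mathup{ol}(m+1)^{d}\theta^{m-1}\sum_i\|\Piti\|_{\mathcal{B}}^{2}$, and then absorb the remaining absolute and $\theta$-dependent constants (the overlap implicit constant, the geometric series, the fixed shift in the exponent of $\theta$) into the deliberately generous extra powers via $\overline{C},c_\star,C_\mathup{ol}\ge 1$, to land on the asserted bound $\|R\|_{\mathcal{B}}^{2}\le\overline{C}^{4}C_\mathup{ol}c_\star^{3}(m+1)^{d}\theta^{m-1}\sum_i\langle t_i,\Piti\rangle$. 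Apart from the shell argument for $\sum_i z^{\prime}_i$, the only other point requiring (routine) care is the mild non-locality of $s(\pi\cdot,\pi\cdot)$, handled uniformly throughout by enlarging every support by a single coarse layer.
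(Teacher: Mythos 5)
Your argument is sound in outline and would deliver an estimate of the same qualitative form, but it takes a genuinely different--and heavier--route than the paper, and the one step you flag as delicate is exactly where it fails to reproduce the stated constant. The paper avoids your ``main obstacle'' (the non-compact support of the far-field part $\sum_i z_i'$) altogether by a duality trick: it writes $\|z\|_\mathcal{B}^2=\sum_i\mathcal{B}(z_i,z)$ and applies a cutoff to the \emph{global} error $z$ rather than to each $z_i$, splitting $z=(1-\chimmi)z+\chimmi z$. Since $(1-\chimmi)z$ is supported in $\Omega\setminus K^m_i$, equation \eqref{eq:piti} gives $\mathcal{B}(\Piti,(1-\chimmi)z)=0$ and disjointness of supports gives $\mathcal{B}(\Pmiti,(1-\chimmi)z)=0$, hence $\mathcal{B}(z_i,z)=\mathcal{B}(z_i,\chimmi z)\le\overline{C}c_\star\|z\|_{\mathcal{B}(K^{m+1}_i)}\|z_i\|_\mathcal{B}$; a single Cauchy--Schwarz over $i$, the overlap count $\sum_i\|z\|^2_{\mathcal{B}(K^{m+1}_i)}\le C_\mathup{ol}(m+1)^d\|z\|^2_\mathcal{B}$, and Lemma \ref{lemma:part2} then yield exactly $\overline{C}^4C_\mathup{ol}c_\star^3(m+1)^d\theta^{m-1}\sum_i\langle t_i,\Piti\rangle$ after dividing once by $\|z\|_\mathcal{B}$. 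Your route---finite-overlap for the near field plus an annular shell decomposition of the far field summed as a geometric series---is a legitimate alternative and the shell estimates themselves are fine, but it leaves an extra factor of order $\bigl(\sum_{\ell\ge0}(\ell+1)^{d/2}\theta^{\ell/2}\bigr)^2\theta^{-2}$, which diverges as $\theta\to1$ (i.e.\ as $c_*\to\infty$); the claim that this can be absorbed into the ``generous extra powers'' $\overline{C}^4c_\star^3$ is plausible in the regimes where $c_\star$ itself is large, but it is asserted rather than proven and would require an explicit comparison of $(1-\theta^{1/2})^{-(d/2+1)}$ with powers of $c_\star$ and $\overline{C}$. In short: your proof establishes $C\,C_\mathup{ol}(m+1)^d\theta^{m-1}\sum_i\langle t_i,\Piti\rangle$ for some constant $C$ depending on $\theta$, $\overline{C}$, $c_\star$ and $d$, which suffices for every downstream use in the paper, but the localization-by-testing argument the authors use is both shorter and gives the lemma with the precise constant stated.
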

\begin{proof}
    Let $z_i\coloneqq\Piti-\Pmiti$ and $z\coloneqq\sum^N_{i=1}z_i$.
    Decompose $z$ as
    \[z=\left\{\left(1-\chi^{m,m+1}_i\right)z\right\}
    +\left\{\chi^{m,m+1}_i z\right\}=:z^\prime+z^{\prime\prime}.\]
    Notice that $supp(z^\prime)\subset \Omega\backslash K^m_i$ and thus $supp(\pi z^\prime)\subset \Omega\backslash K^m_i$. 
    Also, $supp(\Pmiti)\subset cl(K^m_i)$ and thus $supp(\pi\Pmiti)\subset cl(K^m_i)$.
    Then, by equations \eqref{eq:piti} and \eqref{eq:pmiti},
    \[
        \mathcal{B}(\Pmiti,z^\prime)=0 \text{ and }\mathcal{B}(\Piti,z^\prime)=0,
    \]
    granting us \[\mathcal{B}(z_i,z^\prime)=0.\]
    Now, similar to the proof of lemma \ref{lemma:part2},
    \begin{align*}
        \mathcal{B}(z_i,z) & = \mathcal{B}(z_i,z^{\prime\prime}) \leq \overline{C} \|z_i\|_\mathcal{B}\|z_i^{\prime\prime}\|_\mathcal{B}\\
        &\leq \overline{C} \left\{ \left((1+\beta_0^{-1}\Lambda^{-1/2})\|z\|_{\mathcal{A}(K^{m+1}_i)}+\beta_0^{-1}\|\pi z\|_{s(K^{m+1}_i)}\right)^2\right.\\
        &\quad \left. + \left(\Lambda^{-1/2}\|z\|_{\mathcal{A}(K^{m+1}_i)} + \|\pi z\|_{s(K^{m+1}_i)}\right)^2\right\}^{1/2}\|z_i\|_\mathcal{B}\\
        &\leq \overline{C} c_\star \|z\|_{\mathcal{B}{(K^{m+1}_i)}}\|z_i\|_\mathcal{B}.
    \end{align*}
    Also, by the definition of $C_\mathup{ol}$,
    \[\sum^N_{i=1}\|z\|_{\mathcal{A}(K^{m+1}_i)}^2+\|\pi z\|_{s(K^{m+1}_i)}^2
    \leq C_\mathup{ol}(m+1)^d\|z\|_\mathcal{B}^2,\]
    and recall by equation \eqref{eq:piti},
    \[\|\Piti\|_\mathcal{A}^2+\|\pi\Piti\|_s^2=\left<t_i,\Piti\right>.\]
    Hence, by the Cauchy--Schwartz inequality and lemma \ref{lemma:part2},
    \begin{align*}
        \|z\|_\mathcal{B}^2&=\|z\|_\mathcal{A}^2 + \|\pi z\|_s^2 =\sum^N_{i=1}\mathcal{B}(z_i,z) \leq \sum^N_{i=1}\overline{C}c_\star \|z\|_{\mathcal{B}(K^{m+1}_i)}\|z_i\|_\mathcal{B}\\
        &\leq\overline{C}c_\star\left[\sum^N_{i=1}\|z\|_{\mathcal{B}(K^{m+1}_i)}^2\right]^{1/2}
        \left[\sum^N_{i=1}\|z_i\|_\mathcal{B}^2\right]^{1/2}\\
        &\leq \overline{C}c_\star \left[C_\mathup{ol}(m+1)^d\|z\|_{\mathcal{B}(K^{m+1}_i)}^2\right]^{1/2}
        \left[\sum^N_{i=1}\|z_i\|_\mathcal{B}^2\right]^{1/2}\\
        &\leq \overline{C}c_\star \left[ C_\mathup{ol}(m+1)^d\|z\|_{\mathcal{B}}^2\right]^{1/2}
        \left[\overline{C}^2 c_\star \theta^{m-1}\sum^N_{i=1}\left<t_i,\Piti\right>\right]^{1/2}\\
        &\leq \overline{C}^4 C_\mathup{ol} c_\star^3 (m+1)^d \theta^{m-1}\sum^N_{i=1}\left<t_i,\Piti\right>.
    \end{align*}
\end{proof}
\subsubsection{Error Estimates for Boundary Correctors}

We now directly apply these results to estimate $\mathcal{D}^\mathup{glo}\widetilde{g}$ $-\mathcal{D}^m\widetilde{g}$ and $\mathcal{N}^\mathup{glo} q$ $-\mathcal{N}^m q$.
\begin{Corollary}
    With the notations in lemmas \ref{lemma:part1}, \ref{lemma:part2} and \ref{lemma:part3}, 
    \begin{equation}
    \begin{aligned}
        \|\mathcal{D}^\mathup{glo}\widetilde{g}-\mathcal{D}^m\widetilde{g}\|_\mathcal{B}^2
        \leq  \overline{C}^5 C_\mathup{ol} c_\star^3 (m+1)^d \theta^{m-1}\|\widetilde{g}\|^2_{\mathcal{A}}  ;
        \end{aligned}
        \label{eq:dgtg-dmtg}
    \end{equation}
    \begin{equation}
        \|\mathcal{N}^\mathup{glo} q-\mathcal{N}^m q\|_\mathcal{B}^2
        \leq \overline{C}^4 C_\mathup{ol} c_\star^3 (m+1)^d \theta^{m-1} C^2_\mathup{tr} \|q\|_{L^2(\Gamma_N)}^2,
        \label{eq:ngq-nmq}
    \end{equation}
    where \[C_\mathup{tr}\coloneqq\sup_{v\in V,v\neq 0}\frac{\|v\|_{L^2(\Gamma_N)}}{\|v\|_\mathcal{A}}.\]
    
    \label{corollary:1}
\end{Corollary}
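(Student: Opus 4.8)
The plan is to read both estimates off the Abstract Problem: once the correct linear functionals $t_i$ are identified and shown to satisfy its support hypothesis, Lemma~\ref{lemma:part3} supplies the decay factor $\overline{C}^4 C_\mathup{ol} c_\star^3 (m+1)^d \theta^{m-1}$ directly, and only the energy term $\sum_{i=1}^N\langle t_i,\mathcal{P}_i t_i\rangle$ remains to be bounded by the relevant data norm.

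For the Dirichlet corrector, set $\langle t_i^D,v\rangle\coloneqq\mathcal{A}_{(K_i)}(\widetilde g,v)$. Since this is an integral over $K_i$ (and $\partial K_i\cap\Gamma_N$), it vanishes whenever $\operatorname{supp}(v)\subset\Omega\setminus K_i$, so the hypothesis of the Abstract Problem holds; boundedness of $t_i^D$ on $V$ follows from the localized analogue of the continuity bound in Lemma~\ref{lemma:overline_C}, namely $\mathcal{A}_{(K_i)}(w,v)\le\overline C\|w\|_{\mathcal{A}(K_i)}\|v\|_{\mathcal{B}(K_i)}$, which is that proof restricted to $K_i$. Comparing \eqref{eq:Dgtg} with \eqref{eq:piti}, and the first equation of Step~1 in Section~\ref{section:method_1} with \eqref{eq:pmiti}, identifies $\mathcal{D}^\mathup{glo}_i\widetilde g=\mathcal{P}_i t_i^D$ and $\mathcal{D}^m_i\widetilde g=\mathcal{P}^m_i t_i^D$, so $\mathcal{D}^\mathup{glo}\widetilde g-\mathcal{D}^m\widetilde g=\sum_{i=1}^N(\mathcal{P}_i t_i^D-\mathcal{P}^m_i t_i^D)$ and Lemma~\ref{lemma:part3} applies verbatim. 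Testing \eqref{eq:piti} with $v=\mathcal{P}_i t_i^D$ gives $\langle t_i^D,\mathcal{P}_i t_i^D\rangle=\|\mathcal{D}^\mathup{glo}_i\widetilde g\|_\mathcal{B}^2$, and the localized continuity bound yields $\|\mathcal{D}^\mathup{glo}_i\widetilde g\|_\mathcal{B}\le\overline C\|\widetilde g\|_{\mathcal{A}(K_i)}$; summing over $i$ and using that $\{K_i\}$ tiles $\Omega$ gives $\sum_i\langle t_i^D,\mathcal{P}_i t_i^D\rangle\lesssim\|\widetilde g\|_\mathcal{A}^2$ with constant a fixed power of $\overline C$, which substituted into Lemma~\ref{lemma:part3} is \eqref{eq:dgtg-dmtg}.

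For the Neumann corrector the argument is the same with $\langle t_i^N,v\rangle\coloneqq\int_{\partial K_i\cap\Gamma_N}qv\,\di\sigma$: locality is immediate, \eqref{eq:Ngq} and the second equation of Step~1 identify $\mathcal{N}^\mathup{glo}_i q=\mathcal{P}_i t_i^N$ and $\mathcal{N}^m_i q=\mathcal{P}^m_i t_i^N$, and Lemma~\ref{lemma:part3} reduces the claim to bounding $\sum_i\langle t_i^N,\mathcal{P}_i t_i^N\rangle=\sum_i\|\mathcal{N}^\mathup{glo}_i q\|_\mathcal{B}^2$. Here the role played by $\mathcal{A}$-continuity in the Dirichlet case is taken over by the trace constant: $\langle t_i^N,v\rangle\le\|q\|_{L^2(\partial K_i\cap\Gamma_N)}\|v\|_{L^2(\Gamma_N)}\le C_\mathup{tr}\|q\|_{L^2(\partial K_i\cap\Gamma_N)}\|v\|_\mathcal{A}$, so $\|\mathcal{N}^\mathup{glo}_i q\|_\mathcal{B}\le C_\mathup{tr}\|q\|_{L^2(\partial K_i\cap\Gamma_N)}$, and summing produces the factor $C_\mathup{tr}^2\|q\|_{L^2(\Gamma_N)}^2$ in \eqref{eq:ngq-nmq}.

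I do not expect a genuine obstacle; the argument is essentially a dictionary translation into the Abstract Problem. The points that need care are: (i) checking that $t_i^D$ and $t_i^N$ obey the support condition so that Lemmas~\ref{lemma:part1}--\ref{lemma:part3} carry over unchanged; (ii) recording the localized version on $K_i$ of the $\mathcal{A}$-continuity estimate; and (iii) tracking the powers of $\overline C$ and the $C_\mathup{tr}$ factor that accumulate when the energy term is estimated — this is the only delicate bookkeeping, and it is what fixes the precise exponents in \eqref{eq:dgtg-dmtg} and \eqref{eq:ngq-nmq}.
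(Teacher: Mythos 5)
Your proposal is correct and follows essentially the same route as the paper: identify $t_i^D=\mathcal{A}_{(K_i)}(\widetilde g,\cdot)$ and $t_i^N=\int_{\partial K_i\cap\Gamma_N}q(\cdot)\,\di\sigma$ as instances of the Abstract Problem, invoke Lemma~\ref{lemma:part3}, and bound $\sum_i\langle t_i,\mathcal{P}_i t_i\rangle$ by testing \eqref{eq:Dgtg} (resp.\ \eqref{eq:Ngq}) with the global corrector itself, using the localized $\mathcal{A}$-continuity bound in the Dirichlet case and $C_\mathup{tr}$ in the Neumann case. The only looseness is that you leave the power of $\overline C$ in \eqref{eq:dgtg-dmtg} as ``a fixed power,'' whereas the stated result pins it at $\overline C^5$; making that explicit requires writing out $\mathcal{A}_{(K_i)}(\widetilde g,\mathcal{D}^\mathup{glo}_i\widetilde g)=\|\mathcal{D}^\mathup{glo}_i\widetilde g\|_\mathcal{B}^2\le\overline C\|\widetilde g\|_{\mathcal{A}(K_i)}\|\mathcal{D}^\mathup{glo}_i\widetilde g\|_\mathcal{B}$ exactly as the paper does.
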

\begin{proof}
    With lemma \ref{lemma:part3}, it suffices to estimate
    \[\sum^N_{i=1}\mathcal{A}_{(K_i)}(\widetilde{g},\mathcal{D}^\mathup{glo}_i\widetilde{g})\text{ and } 
    \sum^N_{i=1}\int_{\Gamma_N\cap \partial K_i}q \mathcal{N}^\mathup{glo}_i q \di \sigma.\]
    \noindent
   Put $v=\mathcal{D}^\mathup{glo}_i\widetilde{g}$ in equation \eqref{eq:Dgtg} and since $\mathcal{D}^\mathup{glo}_i\widetilde{g}\in V^\mathup{glo}_\mathup{ms}$,
    \begin{align*}
        \|\mathcal{D}^\mathup{glo}_i\widetilde{g}\|_\mathcal{B}^2 = \mathcal{A}_{(K_i)}(\widetilde{g},\mathcal{D}^\mathup{glo}_i\widetilde{g}) \leq \overline{C}\|\widetilde{g}\|_{\mathcal{A}(K_i)}\|\mathcal{D}^\mathup{glo}_i\widetilde{g}\|_\mathcal{B} \leq \overline{C}^2 \|\widetilde{g}\|_{\mathcal{A}(K_i)}^2.
    \end{align*}
    Now, by lemma \ref{lemma:part3},
    \begin{align*}
        \|\mathcal{D}^\mathup{glo}\widetilde{g}-\mathcal{D}^m\widetilde{g}\|_\mathcal{B}^2&\leq \overline{C}^4 C_\mathup{ol} c_\star^3 (m+1)^d \theta^{m-1}\sum^N_{i=1}\mathcal{A}_{(K_i)}(\widetilde{g},\mathcal{D}^\mathup{glo}_i\widetilde{g})\\
        &\leq \overline{C}^5 C_\mathup{ol} c_\star^3 (m+1)^d \theta^{m-1}\sum^N_{i=1}\|\widetilde{g}\|^2_{\mathcal{A}{(K_i)}}\\
        &\leq \overline{C}^5 C_\mathup{ol} c_\star^3 (m+1)^d \theta^{m-1}\|\widetilde{g}\|^2_{\mathcal{A}}.
    \end{align*}
    Similarly we start from the equation \eqref{eq:Ngq},
    \begin{align*}
        \|\mathcal{N}^\mathup{glo}_i q\|_\mathcal{A}^2&\leq\|\mathcal{N}^\mathup{glo}_i q\|_\mathcal{A}^2 + \|\pi\mathcal{N}^\mathup{glo}_i q\|_s^2 =\int_{\partial K_i\cap \Gamma_N}q\mathcal{N}^\mathup{glo}_i q \di \sigma\\
        &\leq \|q\|_{L^2(\partial K_i\cap \Gamma_N)}\|\mathcal{N}^\mathup{glo}_i q\|_{L^2(\partial K_i\cap \Gamma_N)}\\
        &\leq C_\mathup{tr}\|q\|_{L^2(\partial K_i\cap \Gamma_N)}\|\mathcal{N}^\mathup{glo}_i q\|_\mathcal{A},
    \end{align*}
    which yields $\|\mathcal{N}^\mathup{glo}_i q\|_\mathcal{A}\leq C_\mathup{tr}\|q\|_{L^2(\partial K_i\cap\Gamma_N)}$.
    Then,
    \begin{align*}
        &\quad \sum^N_{i=1}\int_{\partial K_i\cap\Gamma_N}q\mathcal{N}^\mathup{glo}_i q \di \sigma
        \leq \sum^N_{i=1}\|q\|_{L^2(\partial K_i\cap \Gamma_N)}\left\|\mathcal{N}^\mathup{glo}_i q\right\|_{L^2(\Gamma_N)}\\
        &\leq \sum^N_{i=1}C_\mathup{tr}\|q\|_{L^2(\partial K_i\cap \Gamma_N)}\left\|\mathcal{N}^\mathup{glo}_i q\right\|_{\mathcal{A}} \leq \sum^N_{i=1}C_\mathup{tr}^2\|q\|_{L^2(\partial K_i\cap \Gamma_N)}^2\\
        &= C_\mathup{tr}^2\|q\|_{L^2(\Gamma_N)}^2.
    \end{align*}
\end{proof}

To further analyse function spaces $V^\mathup{glo}_\mathup{ms}$ and $V^m_\mathup{ms}$, 
we define operators 
$\mathcal{R}^\mathup{glo}\coloneqq\sum^N_{i=1}\mathcal{R}^\mathup{glo}_i$
$\colon L^2(\Omega)\rightarrow V^\mathup{glo}_\mathup{ms}$ and 
$\Rmm\colon= \sum^N_{i=1}\Rmmi\colon L^2(\Omega)\rightarrow V^m_\mathup{ms}$
where
\begin{equation}
    \mathcal{A}(\Rgloi\varphi,v)+s(\pi\Rgloi\varphi,\pi v)= s(\pi_i\varphi,\pi v),\text{ for } v\in V;
    \label{eq:Rgloi}
\end{equation}
\begin{equation}
    \mathcal{A}(\Rmmi\varphi,v)+s(\pi\Rmmi\varphi,\pi v)=s(\pi_i\varphi,\pi v),\text{ for } v\in V^m_i.
    \label{eq:Rmmi}
\end{equation}

We also remark another lemma \cite{CHUNG2018298}.
\begin{lemma}
    There exists a positive constant $C_\mathup{inv}$ such that
    for any $v\in L^2(\Omega)$, 
    there exists $\widehat{v}\in V$ with $\pi \widehat{v}=\pi v$ 
    such that $\|\widehat{v}\|_\mathcal{A}\leq C_\mathup{inv}\|\pi v\|_s.$
    \label{lemma:Cinv}
\end{lemma}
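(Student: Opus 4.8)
The goal is to prove Lemma~\ref{lemma:Cinv}: for any $v\in L^2(\Omega)$ there is $\widehat v\in V$ with $\pi\widehat v=\pi v$ and $\|\widehat v\|_{\mathcal A}\le C_{\mathup{inv}}\|\pi v\|_s$. Since $\pi v=\sum_i\pi_i v$ and each $\pi_i v\in V^{\mathup{aux}}_i$, it suffices to build, for each $i$ and each auxiliary basis function $\phi_i^j$ supported on $K_i$, a function in $V$ whose $\pi$-image equals $\phi_i^j$ (up to the component on $K_i$) and to control its energy; then assemble by linearity. The natural construction is local: for each coarse block $K_i$ and each target $\phi\in V^{\mathup{aux}}_i$, solve the local problem of finding $\widehat v_i\in V_0(K_i):=\mathcal H^1_0(K_i)$ (extended by zero to $\Omega$) minimizing $\|\widehat v_i\|_{a(K_i)}$ subject to $\pi_i\widehat v_i=\phi$. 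The zero-extension means $\pi_k\widehat v_i=0$ for $k\neq i$, so summing over the components of $\pi v$ does what we want.

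\textbf{Plan.} First I would reduce to a single block: fix $K_i$ and $\phi=\pi_i v\in V^{\mathup{aux}}_i$, and seek $\widehat v_i\in \mathcal H^1_0(K_i)$ with $\pi_i\widehat v_i=\phi$ of minimal $a(K_i)$-energy. Existence follows from the direct method once we know the constraint set is nonempty; nonemptiness is the crux and I address it below. Given existence, the map $\phi\mapsto\widehat v_i$ is linear, and on the finite-dimensional space $V^{\mathup{aux}}_i$ all norms are equivalent, so there is a constant $C_i$ with $\|\widehat v_i\|_{a(K_i)}\le C_i\|\phi\|_{s(K_i)}$. Second, I would take $C_{\mathup{inv}}$ to be (a fixed multiple of) $\max_i C_i$; by a scaling argument on the reference element this maximum is bounded independently of $H$ and of the individual blocks, because the local spectral problem, the projection $\pi_i$, and the norms $a(K_i)$, $s(K_i)$ all scale consistently under the affine map to the reference block (the $H^{-2}$ in $s_i$ exactly compensates the Jacobian scaling of the gradient term, which is why $C_{\mathup{inv}}$ is $H$-independent). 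Third, I would set $\widehat v=\sum_i\widehat v_i$ (each extended by zero). Since $\widehat v_i$ vanishes outside $K_i$ and on $\partial\Omega$, we have $\widehat v\in V$; and because the $K_i$ have bounded overlap and $\pi_k\widehat v_i=0$ for $k\ne i$, one gets $\pi\widehat v=\sum_i\pi_i\widehat v_i=\sum_i\phi_i=\pi v$, while $\|\widehat v\|_{\mathcal A}^2\lesssim\sum_i\|\widehat v_i\|_{a(K_i)}^2\le C_{\mathup{inv}}^2\sum_i\|\pi_i v\|_{s(K_i)}^2=C_{\mathup{inv}}^2\|\pi v\|_s^2$, using that $\mathcal A$ is controlled by $a$ and that the supports are essentially disjoint up to a fixed overlap constant.

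\textbf{Main obstacle.} The one nontrivial point is nonemptiness of the constraint set: given $\phi=\sum_{j\le l_i}c_j\phi_i^j\in V^{\mathup{aux}}_i$, does there exist $w\in\mathcal H^1_0(K_i)$ with $\pi_i w=\phi$? Equivalently, the linear map $\pi_i:\mathcal H^1_0(K_i)\to V^{\mathup{aux}}_i$ must be surjective. This is where I would work: it suffices to show that no nonzero element of $V^{\mathup{aux}}_i$ is $s_i$-orthogonal to all of $\mathcal H^1_0(K_i)$. Suppose $\phi\in V^{\mathup{aux}}_i$ satisfies $s_i(\phi,w)=0$ for every $w\in\mathcal H^1_0(K_i)$; since $s_i(\phi,w)=\int_{K_i}\widetilde\kappa\,\phi w$ and $\widetilde\kappa>0$ (as $|\bbeta|\ge\beta_0\ge 1$ and $\kappa_1>0$), testing against $w$ ranging over a dense subset of $L^2(K_i)$ (e.g.\ smooth compactly supported functions, which lie in $\mathcal H^1_0(K_i)$) forces $\widetilde\kappa\phi=0$ a.e., hence $\phi=0$. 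Therefore $\pi_i$ is surjective onto $V^{\mathup{aux}}_i$ and the constraint set is nonempty (indeed an affine subspace), so the minimization is well posed. A small remark: one must check $\widehat v_i\in\mathcal H^1_0(K_i)\subset\mathcal H^1(\Omega)$ after zero extension vanishes on $\partial\Omega$, which is immediate, so $\widehat v\in V$ regardless of how $\Gamma_D,\Gamma_N$ partition $\partial\Omega$.
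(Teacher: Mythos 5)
Your argument is correct, but note that the paper itself does not prove this lemma: it is stated as a remark and attributed to \cite{CHUNG2018298}, so there is no in-paper proof to match. What you have written is essentially the standard argument from that reference: localize to each coarse element $K_i$, show $\pi_i$ restricted to $\mathcal{H}^1_0(K_i)$ is surjective onto $V^{\mathup{aux}}_i$ (your density argument via $C_c^\infty(K_i)\subset\mathcal{H}^1_0(K_i)$ and the strict positivity of $\widetilde{\kappa}$, which uses $|\bbeta|\ge\beta_0\ge 1$, is the right way to do this), take the minimal $a(K_i)$-energy representative, get a constant $C_i$ from linearity of the solution map and norm equivalence on the finite-dimensional $V^{\mathup{aux}}_i$, and assemble by zero extension over the disjoint elements of $\mathcal{T}^H$. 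All the supporting details check out: the boundary terms in $\|\cdot\|_{a(K_i)}$ and $\|\cdot\|_{\mathcal{A}}$ vanish on $\mathcal{H}^1_0(K_i)$, Poincar\'e gives coercivity so the constrained minimization is well posed, and $\pi_k\widehat v_i=0$ for $k\ne i$ since the $K_i$ partition $\Omega$. The one soft spot is your claim that $\max_i C_i$ is bounded independently of $H$ by a reference-element scaling: with heterogeneous, high-contrast $\boldsymbol{A}$ and $\bbeta$ the local eigenfunctions do not transform into fixed reference objects, so that claim is heuristic at best. Fortunately the lemma as stated only asserts existence of \emph{some} constant $C_\mathup{inv}$, which follows from finiteness of the mesh; the possible dependence of $C_\mathup{inv}$ on $H$ and the contrast is exactly why the paper later imposes conditions such as $C_\mathup{inv}\theta^{(m-1)/2}(m+1)^{d/2}=O(H^2)$ rather than treating $C_\mathup{inv}$ as universal. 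I would drop or clearly flag the scaling remark; the rest stands.
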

\subsubsection{Error Estimate for the Multiscale Solution}
\label{section:error1}

We now state the main result.
\begin{theorem}
    Let $\mathcal{D}^m\widetilde{g}, \mathcal{N}^m q$ and $w^m$ be the numerical solutions as defined before, 
    $w^\mathup{glo}$ defined in equation \eqref{eq:w^glo_u0},
    and $\Lambda, \theta, c_\star, c_\#, C_\mathup{tr}$ and $ C_\mathup{inv}$ be the constants defined in theorem \ref{thm:1}, lemma \ref{lemma:part1}, lemma \ref{lemma:part2},
    lemma \ref{lemma:part3}, corollary \ref{corollary:1} and lemma \ref{lemma:Cinv} respectively.
    Then
    \begin{align*}
        &\|w^m-\mathcal{D}^m\widetilde{g}+\mathcal{N}^m q +\widetilde{g}-u\|_\mathcal{A}\\
        &\leq \overline{C}\left\{\Lambda^{-1/2}\kappa_1^{-1/2}H\left(\||\bbeta|^{-1}f\|_{L^2(\Omega)}+\|\nabla u_0\|_{L^2(\Omega)}\right)\right\}\\
        &+\overline{C}^2 \sqrt{C_\mathup{ol}} c_\star^{3/2} (m+1)^{d/2}\theta^{(m-1)/2}\left\{\overline{C}^2\max(C^2_\mathup{inv},1)\|w^\mathup{glo}\|_\mathcal{B}\right.\left.+\overline{C}\|\widetilde{g}\|_\mathcal{A}
        +C_\mathup{tr}\|q\|_{L^2(\Gamma_N)}
        \right\}.
    \end{align*}
    \label{thm:2}
    Moreover, if we have $C_\mathup{inv}\theta^{(m-1)/2}(m+1)^{d/2}=O(H^2)$,
    then\[\|w^m-\mathcal{D}^m\widetilde{g}+\mathcal{N}^m q+\widetilde{g}-u\|_\mathcal{A} =O(H).\]
\end{theorem}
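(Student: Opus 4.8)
The plan is to bound the total error by a triangle inequality through the global approximation $\widetilde{u}$ of Theorem~\ref{thm:1}, so that
\[
\|w^m-\mathcal{D}^m\widetilde{g}+\mathcal{N}^m q+\widetilde{g}-u\|_\mathcal{A}
\le \|\widetilde{u}-u\|_\mathcal{A} + \|(w^m-\mathcal{D}^m\widetilde{g}+\mathcal{N}^m q)-(w^\mathup{glo}-\mathcal{D}^\mathup{glo}\widetilde{g}+\mathcal{N}^\mathup{glo} q)\|_\mathcal{A}.
\]
The first term is already controlled by Theorem~\ref{thm:1}, and it will absorb into the stated bound once I note $\|\cdot\|_\mathcal{A}\le\overline{C}\|\cdot\|_\mathcal{B}$ (in fact $\|\cdot\|_\mathcal{A}\le\|\cdot\|_\mathcal{B}$, but writing $\overline{C}$ matches the statement). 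So the real work is the second term: the discrepancy between the oversampled quantities and their global (infinite-oversampling) counterparts.

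For that second term I would split further into three pieces, $w^m-w^\mathup{glo}$, $\mathcal{D}^m\widetilde{g}-\mathcal{D}^\mathup{glo}\widetilde{g}$, and $\mathcal{N}^m q-\mathcal{N}^\mathup{glo} q$. The last two are exactly Corollary~\ref{corollary:1}, giving the $\overline{C}\|\widetilde{g}\|_\mathcal{A}$ and $C_\mathup{tr}\|q\|_{L^2(\Gamma_N)}$ contributions (after taking square roots and pulling out the common factor $\overline{C}^2\sqrt{C_\mathup{ol}}\,c_\star^{3/2}(m+1)^{d/2}\theta^{(m-1)/2}$; note $\|\mathcal{D}^\mathup{glo}\widetilde g-\mathcal{D}^m\widetilde g\|_\mathcal{A}\le\overline{C}^{5/2}\sqrt{C_\mathup{ol}}\,c_\star^{3/2}(m+1)^{d/2}\theta^{(m-1)/2}\|\widetilde g\|_\mathcal{A}$, so one factor of $\overline{C}^{1/2}$ gets relabelled). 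For $w^m-w^\mathup{glo}$ I would use the abstract framework: $w^\mathup{glo}$ solves \eqref{eq:w^glo_u0} and $w^m$ solves \eqref{eq:w^m}, and by Lemma~\ref{lemma:orthogonal} the Galerkin-type structure lets me write $w^\mathup{glo}=\sum_i \mathcal{P}_i t_i$ and $w^m=\sum_i \mathcal{P}^m_i t_i$ for an appropriate functional $t_i$ built from $f$, $\widetilde{g}$, $q$, $\mathcal{D}^\mathup{glo}\widetilde g$, $\mathcal{N}^\mathup{glo} q$ localized to $K_i$ — using that each $\psi^j_i$ and the correctors are localizable and $\pi$ acts blockwise. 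Then Lemma~\ref{lemma:part3} applies, and I bound $\sum_i\langle t_i,\mathcal{P}_i t_i\rangle=\sum_i\|\mathcal{P}_i t_i\|_\mathcal{A}^2+\|\pi\mathcal{P}_i t_i\|_s^2$ by $\|w^\mathup{glo}\|_\mathcal{B}^2$ up to the constants, invoking Lemma~\ref{lemma:Cinv} to handle the $s$-part (this is where $C_\mathup{inv}$ and the $\max(C_\mathup{inv}^2,1)$ enter) and $\overline{C}$ for the $\mathcal{A}$-part.

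The main obstacle I anticipate is the bookkeeping in identifying the right localized functional $t_i$ so that $w^m-w^\mathup{glo}$ genuinely fits the abstract problem — in particular checking the support condition $\langle t_i,v\rangle=0$ for $\operatorname{supp}(v)\subset\Omega\setminus K_i$, which requires that the source data be re-expressed through the auxiliary projection $\pi_i$ (only $\pi$-components matter by Lemma~\ref{lemma:orthogonal}), and keeping track of how the errors in $\mathcal{D}^\mathup{glo}\widetilde g$, $\mathcal{N}^\mathup{glo} q$ feed into the right-hand side without double-counting. Once the decomposition is set up cleanly, each piece is a direct citation of an earlier lemma or corollary. Finally, for the "moreover" claim I would observe that all constants except the explicitly displayed $H$-factor are $H$-independent, the prefactor $\Lambda^{-1/2}\kappa_1^{-1/2}H(\cdots)=O(H)$, and the hypothesis $C_\mathup{inv}\theta^{(m-1)/2}(m+1)^{d/2}=O(H^2)$ forces the second group of terms to be $O(H^2)=O(H)$ as well (using that $\theta^{(m-1)/2}(m+1)^{d/2}$ dominates $c_\star^{3/2}\overline{C}^4\sqrt{C_\mathup{ol}}$ times itself up to constants and that $\|w^\mathup{glo}\|_\mathcal{B}$, $\|\widetilde g\|_\mathcal{A}$, $\|q\|_{L^2(\Gamma_N)}$ are bounded by the data), hence the sum is $O(H)$.
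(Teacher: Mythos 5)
There is a genuine gap at the heart of your argument, in the treatment of $w^m-w^\mathup{glo}$. Your plain triangle inequality forces you to estimate the difference between the actual numerical solution $w^m$ and the global solution $w^\mathup{glo}$, and you propose to do this by realizing both as $\sum_i\Pmiti$ and $\sum_i\Piti$ for suitable localized functionals $t_i$. But $w^m$ does not have that structure: it is a global Galerkin solve in $V^m_\mathup{ms}$ with the nonsymmetric form $\mathcal{A}$, whose right-hand side moreover contains $\mathcal{D}^m\widetilde{g}$ and $\mathcal{N}^m q$ rather than their global counterparts, whereas $\Pmiti$ is a local $\mathcal{B}$-solve on the patch $K^m_i$. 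The paper never estimates $w^m-w^\mathup{glo}$ at all. Instead it (i) uses the Galerkin orthogonality $\mathcal{A}(u_0-u^\mathup{ms}_0,v')=0$ for $v'\in V^m_\mathup{ms}$ together with $\mathcal{A}(w,v)\le\overline{C}\|w\|_\mathcal{A}\|v\|_\mathcal{B}$ to obtain the quasi-optimality bound $\|u_0-u^\mathup{ms}_0\|_\mathcal{A}\le\overline{C}\|u_0-v\|_\mathcal{B}$ for \emph{any} competitor $v$ in the affine space $V^m_\mathup{ms}-\mathcal{D}^m\widetilde{g}+\mathcal{N}^m q$, and (ii) chooses the competitor $v=w^m_*-\mathcal{D}^m\widetilde{g}+\mathcal{N}^m q$, where $w^m_*\coloneqq\Rmm\varphi_*$ and $\varphi_*$ comes from the surjectivity of $\Rglo$ via $w^\mathup{glo}=\Rglo\varphi_*$. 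The pair $(\Rgloi\varphi_*,\Rmmi\varphi_*)$ genuinely fits the abstract problem with $\left<t_i,v\right>=s(\pi_i\varphi_*,\pi v)$, and Lemma \ref{lemma:Cinv} then converts $\|\pi\varphi_*\|_s$ into $\overline{C}\max(C_\mathup{inv},1)\|w^\mathup{glo}\|_\mathcal{B}$. Without the quasi-optimality step and the surrogate $w^m_*$, the ``bookkeeping'' you anticipate cannot be completed; this is a missing idea, not a technicality. (Your handling of the two corrector differences via Corollary \ref{corollary:1} is correct and matches the paper.)

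A second error is in the ``moreover'' part: $\|w^\mathup{glo}\|_\mathcal{B}$ is not $O(1)$. Since $\widetilde{\kappa}\sim H^{-2}$, the paper shows $\|\pi w^\mathup{glo}\|_s\le\|\pi u_0\|_s+\|\pi\mathcal{D}^\mathup{glo}\widetilde{g}\|_s+\|\pi\mathcal{N}^\mathup{glo} q\|_s=O(H^{-1})+O(1)$, hence $\|w^\mathup{glo}\|_\mathcal{B}=O(H^{-1})$. This is precisely why the hypothesis demands $C_\mathup{inv}\theta^{(m-1)/2}(m+1)^{d/2}=O(H^2)$ rather than $O(H)$: the relevant product is $O(H^2)\cdot O(H^{-1})=O(H)$. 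Your claim that all the norms in the second bracket are ``bounded by the data'' would wrongly suggest that an $O(H)$ hypothesis suffices and that the second group of terms is $O(H^2)$; the final conclusion $O(H)$ happens to survive, but the justification does not.
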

\begin{proof}
    By definition of $\Rgloi$, $\Rglo$ is surjective.
    Then there exists $\varphi_*\in L^2(\Omega)$ such that $w^\mathup{glo}=\Rglo\varphi_*$.
    Let $w^m_*=\Rmm\varphi_*$.
    To estimate $\Rgloi\varphi_* - \Rmmi\varphi_*$, we use the similar argument in corollary \ref{corollary:1}.
    \begin{align*}
        &\quad \|\Rgloi\varphi_* - \Rmmi\varphi_*\|_\mathcal{B}^2 \leq \overline{C}^4 C_\mathup{ol} c_\star^3 (m+1)^d \theta^{m-1}\sum^N_{i=1}
        s(\pi_i\varphi_*,\pi \Rgloi\varphi_*)\\
        &\leq\overline{C}^4 C_\mathup{ol} c_\star^3 (m+1)^d \theta^{m-1}\sum^N_{i=1}
        \|\pi_i\varphi_*\|_s\|\pi\Rglo \varphi_*\|_s \leq\overline{C}^4 C_\mathup{ol} c_\star^3 (m+1)^d \theta^{m-1}
        \sum^N_{i=1}\|\pi_i\varphi_*\|_s^2\\
        &=\overline{C}^4 C_\mathup{ol} c_\star^3 (m+1)^d \theta^{m-1}\|\pi \varphi_*\|^2_s.
    \end{align*}
    Then, by lemma \ref{lemma:Cinv},
    there exists $\widehat{\varphi}_*\in V$ such that \[\pi\widehat{\varphi}_*=\pi\varphi_*
    \text{ and }\|\widehat{\varphi}_*\|_\mathcal{A}\leq C_\mathup{inv}\|\pi \varphi_*\|_s.\]
    Putting $v=\widehat{\varphi}_*$ in equation \eqref{eq:Rgloi},
    \begin{align*}
        \|\pi\varphi_*\|_s^2=\mathcal{B}(w^\mathup{glo},\widehat{\varphi}_*)
        \leq \overline{C}\|w^\mathup{glo}\|_\mathcal{B}\|\widehat{\varphi}\|_\mathcal{B}
        \leq \overline{C}\max(C_\mathup{inv},1)\|w^\mathup{glo}\|_\mathcal{B}\|\pi {\varphi_*}\|_s.
    \end{align*}    
    So,
    \[\|\Rgloi\varphi_*-\Rmmi \varphi_*\|_\mathcal{B}^2\leq \overline{C}^6 C_\mathup{ol} c_\star^3 (m+1)^d \theta^{m-1}\max(C_\mathup{inv}^2,1)\|w^\mathup{glo}\|_\mathcal{B}^2.\]
    Now, let $u^\mathup{ms}_0\coloneqq w^m-\mathcal{D}^m\widetilde{g}+\mathcal{N}^m q$ and $u^\mathup{glo}_0\coloneqq w^\mathup{glo}-\mathcal{D}^\mathup{glo}\widetilde{g}+\mathcal{N}^\mathup{glo} q$. Then,
    \begin{align*}
        \|u-u^\mathup{ms}\|_\mathcal{A}^2&= \|u_0-u^\mathup{ms}_0\|_\mathcal{A}^2 = \mathcal{A}(u_0-u^\mathup{ms}_0,u_0-v)\\
        &\leq \overline{C}\|u_0-u^\mathup{ms}_0\|_\mathcal{A}\|u_0-v\|_\mathcal{B}.
    \end{align*}
    Putting $v=w^{m}_*-\mathcal{D}^m\widetilde{g}+\mathcal{N}^m q$,
    \begin{align*}
        &\|u_0-v\|_\mathcal{B}
        \leq \|u_0-\widetilde{u}_0\|_\mathcal{B}
        +\|\widetilde{u}_0-v\|_\mathcal{B}\\
        &\leq \|u_0-\widetilde{u}_0\|_\mathcal{A}
        + \|w^\mathup{glo}-w^m_*\|_\mathcal{B}
        + \|\mathcal{D}^\mathup{glo}\widetilde{g}-\mathcal{D}^m\widetilde{g}\|_\mathcal{B}
        + \|\mathcal{N}^\mathup{glo} q-\mathcal{N}^m q\|_\mathcal{B}.
    \end{align*}
    Altogether,
    \begin{align*}
        &\|u-u^\mathup{ms}\|_\mathcal{A}\leq \overline{C} \|u_0-v\|_\mathcal{B}\\
        &\leq \overline{C}\left\{\|u_0-\widetilde{u}_0\|_\mathcal{A}
        + \|w^\mathup{glo}-w^m_*\|_\mathcal{B}
        + \|\mathcal{D}^\mathup{glo}\widetilde{g}-\mathcal{D}^m\widetilde{g}\|_\mathcal{B}
        + \|\mathcal{N}^\mathup{glo} q-\mathcal{N}^m q\|_\mathcal{B}\right\}\\
        &\leq \overline{C}\left\{\Lambda^{-1/2}\kappa_1^{-1/2}H\left(\||\bbeta|^{-1}f\|_{L^2(\Omega)}+\|\nabla u_0\|_{L^2(\Omega)}\right)\right\}\\
        &\quad +\overline{C}^2 \sqrt{C_\mathup{ol}} c_\star^{3/2} (m+1)^{d/2}\theta^{\frac{m-1}{2}}\left\{\max(C_\mathup{inv},1)\|w^\mathup{glo}\|_\mathcal{B}\right.
        \left.+\overline{C}\|\widetilde{g}\|_\mathcal{A}
        +C_\mathup{tr}\|q\|_{L^2(\Gamma_N)}
        \right\} .
    \end{align*}
    \noindent
    Now, first recall that we have $\|u_0-\widetilde{u}_0\|_\mathcal{A}=O(1)$. Note that $\overline{C}=O(1)$.
    By the equations \eqref{eq:dgtg-dmtg} and \eqref{eq:ngq-nmq}, we have $\|\mathcal{D}^\mathup{glo}\widetilde{g}-\mathcal{D}^m\widetilde{g}\|_\mathcal{A} = O(1)$ and $\|\mathcal{N}^\mathup{glo} q-\mathcal{N}^m q\|_\mathcal{A}$ $= O(1)$.
    Assume $(m+1)^{d/2}\theta^{\frac{m-1}{2}}C_\mathup{inv} = O(H^2)$. It suffices to estimate $\|w^\mathup{glo}\|_\mathcal{A}$ and $\|\pi w^\mathup{glo}\|_s$.
    By equation \eqref{eq:w^glo_u0},
    \[\|w^\mathup{glo}\|_\mathcal{A}^2=\mathcal{A}(u_0,w^\mathup{glo})+\mathcal{A}(\mathcal{D}^\mathup{glo}\widetilde{g},w^\mathup{glo})-\mathcal{A}(\mathcal{N}^\mathup{glo} q,w^\mathup{glo})\]
    giving us
    \[ \|w^\mathup{glo}\|_\mathcal{A}\leq \|u_0\|_\mathcal{A}+ \|\mathcal{D}^\mathup{glo}\widetilde{g}\|_\mathcal{A} + \|\mathcal{N}^\mathup{glo} q\|_\mathcal{A} =O(1).\]
    On the other hand,
    \[\|\pi w^\mathup{glo}\|_s \leq \|\pi u_0\|_s + \|\pi\mathcal{D}^\mathup{glo}\widetilde{g}\|_s + \|\pi\mathcal{N}^\mathup{glo} q\|_s \leq O(H^{-1})+ O(1).\]
    \noindent
    Therefore, we can obtain 
    $\|u^\mathup{ms}-u\|_\mathcal{A} = O(H)$.
\end{proof}

\subsection{Numerical Experiments}
\label{section:numerical_experiments_1}

In this section, we will demonstrate the method via several numerical examples in a high-contrast setting and verify the significance of the inflow condition. 
For simplicity, we take point-wise isotropic coefficients, $\boldsymbol{A}(x)=\kappa(x)\boldsymbol{I}$, the domain $\Omega = [0,1]\times[0,1]$. We will calculate the reference solutions on a $200\times200$ mesh with the bilinear Lagrange finite element method. The medium $\kappa$ is presented in Figure \ref{fig:mediumA} and the source term in Figure \ref{fig:source}. The experiments are each tested for coarse mesh $H = \frac{1}{10}, \frac{1}{20}$ and $\frac{1}{40}$ with the fixed number $l_m$ of eigenfunctions to generate the auxiliary space $V^\mathup{aux}$. By our experiments, we tested $l_m=3$ to be sufficient to verify our results. 
 \begin{figure}[ht]
\begin{subfigure}[b]{0.45\textwidth}
    \centering
    \includegraphics[width=\textwidth]{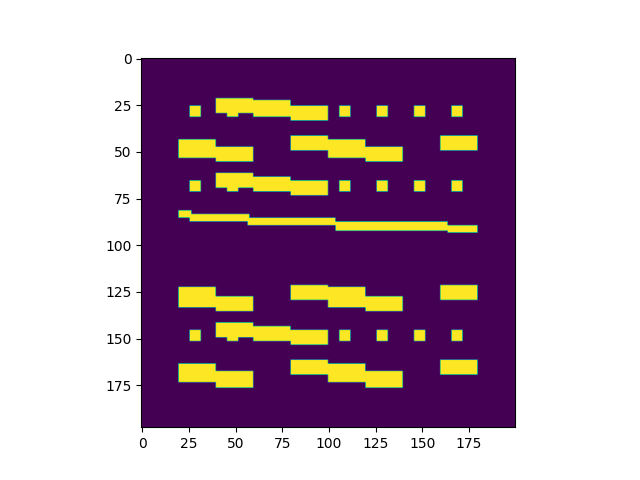}
    \caption{}
    \label{fig:mediumA}
\end{subfigure}
\hfill
\begin{subfigure}[b]{0.45\textwidth}
    \centering
    \includegraphics[width=\textwidth]{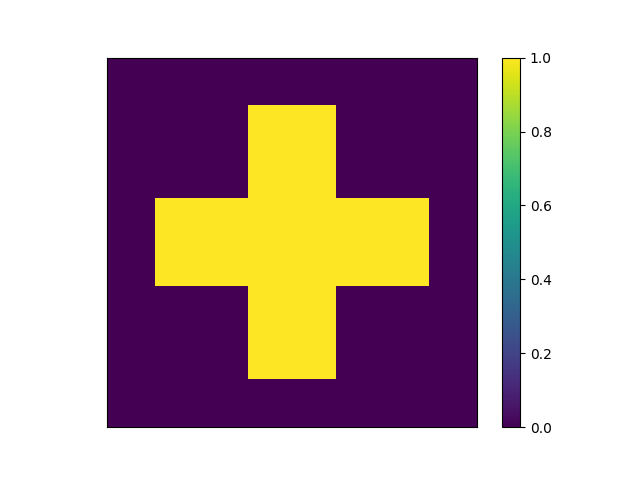}
    \caption{}
    \label{fig:source}
\end{subfigure}
\caption{\ref{fig:mediumA} Medium $\kappa$ \ref{fig:source} The source term $f$}
\end{figure}

\subsubsection{Example 1}
We first look at the Dirichlet condition, via considering the following problem:
\begin{equation}
    \begin{cases}
        -\nabla\cdot\left(\kappa(x_1,x_2) \nabla u\right)+\bbeta(x_1,x_2)\cdot\nabla u=f &\text{ for } (x_1,x_2)\in\Omega\\
        u(x_1,x_2)=\widetilde{g}(x_1,x_2)=x_1^2 + e^{x_1 x_2}&\text{ for }(x_1,x_2)\in \{0,1\}\times[0,1]
        \end{cases}
\end{equation}
where
$$\bbeta(x_1,x_2)=\left[\cos(18\pi x_2)\sin(18\pi x_1),
-\cos(18\pi x_1)\sin(18\pi x_2)\right]^T.$$

\begin{table}[ht]
\centering
\begin{tabular}{c|cccccc}
\hline\hline
$\kappa_1/\kappa_0$ & \multicolumn{1}{c}{$10^1$} & \multicolumn{1}{c}{$10^2$} & \multicolumn{1}{c}{$10^3$} & \multicolumn{1}{c}{$10^4$} & $10^5$ & $10^6$ \\\hline
$E^3_a$ & 9.10E-04 & 7.17E-04 & 3.64E-04 & 1.37E-04 & 7.16E-05 & 6.10E-05 \\
$E^3_L$ & 5.53E-06 & 5.64E-06 & 5.66E-06 & 5.66E-06 & 5.84E-06 & 1.08E-05 \\\hline
$D^3_a$ & 2.29E-06 & 1.06E-06 & 9.14E-07 & 8.99E-07 & 8.97E-07 & 8.97E-07 \\
$D^3_L$ & 4.24E-06 & 3.76E-06 & 3.77E-06 & 3.78E-06 & 3.78E-06 & 3.78E-06\\\hline\hline
\end{tabular}
\caption{Example 1: Dirichlet Boundary Condition with varying contrast levels $\kappa_1/\kappa_0$ and fixed $l_m=3$, $N_{ov}=4$, $H=1/40$}
\label{tab1.1}
\end{table}

\begin{table}[ht]
\centering
\begin{tabular}{c|ccccc}
\hline\hline
$l_m$ & \multicolumn{1}{c}{1} & \multicolumn{1}{c}{2} & \multicolumn{1}{c}{3} & \multicolumn{1}{c}{4} & 5 \\\hline
$E^3_a$ & 8.75E-03 & 2.88E-03 & 1.54E-03 & 1.78E-03 & 9.11E-04 \\
$E^3_L$ & 2.82E-04 & 3.03E-05 & 1.11E-05 & 1.38E-05 & 5.00E-06 \\\hline
$D^3_a$ & 2.15E-04 & 5.67E-05 & 2.41E-05 & 2.79E-05 & 1.55E-05 \\
$D^3_L$ & 1.33E-03 & 3.51E-04 & 9.47E-05 & 1.18E-04 & 6.38E-05\\\hline\hline
\end{tabular}
\caption{Example 1: Dirichlet Boundary Condition with varying numbers $l_m$ of eigenfunctions and fixed $\kappa_1/\kappa_0=10^3$, $N_{ov}=3$, $H=1/40$}
\label{tab1.2}
\end{table}

To simplify notations, we denote the relative errors for the Dirichlet corrector
\[D^m_a \coloneqq\frac{\|\mathcal{D}^m\widetilde{g}-\mathcal{D}^\mathup{glo}\widetilde{g}\|_\mathcal{A}}{\|\mathcal{D}^\mathup{glo}\widetilde{g}\|_\mathcal{A}}
\text{ and }
D^m_L \coloneqq \frac{\|\mathcal{D}^m\widetilde{g}-\mathcal{D}^\mathup{glo}\widetilde{g}\|_{L^2(\Omega)}}{\|\mathcal{D}^\mathup{glo}\widetilde{g}\|_{L^2}};\]

For the Neumann condition corrector,
\[N^m_a \coloneqq\frac{\|\mathcal{N}^m q-\mathcal{N}^\mathup{glo} q\|_\mathcal{A}}{\|\mathcal{N}^\mathup{glo} q\|_\mathcal{A}}
\text{ and }
N^m_L \coloneqq \frac{\|\mathcal{N}^m q-\mathcal{N}^\mathup{glo} q\|_{L^2(\Omega)}}{\|\mathcal{N}^\mathup{glo} q\|_{L^2}};\]
to measure errors and $\Lambda^\prime = \max_i\lambda^{l_m}_i$.
As for the error estimate,
\[ E^m_a \coloneqq \frac{\|u^\mathup{ms}-u\|_\mathcal{A}}{\|u\|_\mathcal{A}} \text{ and }
E^m_L \coloneqq \frac{\|u^\mathup{ms}-u\|_{L^2}}{\|u\|_{L^2}}.\]

\begin{table}[ht]
\centering
\begin{tabular}{@{\extracolsep{\fill}}cccc}
\hline\hline
 & \multicolumn{1}{c}{1/10} & \multicolumn{1}{c}{1/20} & \multicolumn{1}{c}{1/40} \\\hline
    \multicolumn{1}{l}{$\Lambda$} & 2.273418 & 2.328070 & 3.185349 \\\hline
    $E^3_a$ & \textbf{3.63E-03} & 1.25E-03 & 1;54E-03 \\
    $E^4_a$ & 3.63E-03 & \textbf{1.16E-03(32.0\%)} & 3.64E-04 \\
    $E^5_a$ & 3.63E-03 & 1.16E-03 & \textbf{3.60E-04(31.0\%)} \\\hline
    $E^3_L$ & \textbf{1.49E-04} & 3.10E-05 & 1.10E-05 \\
    $E^4_L$ & 1.49E-04 & \textbf{3.10E-05(20.8\%)} & 6.00E-06 \\
    $E^5_L$ & 1.49E-04 & 3.10E-05 & \textbf{6.00E-06(19.4\%)}\\ \hline\hline
\end{tabular}
\caption{Example 1 with $\kappa_1/\kappa_0=10^4$, $l_m=3$}
\label{tab:Diri_solution}
\end{table}

As can be seen in Table \ref{tab1.1}, the error of the Dirichlet corrector decays exponentially as we increase the number of oversampling layers. From Table \ref{tab1.2}, the number of eigenfunctions does improve the results for both the overall multiscale solutions and the boundary correctors. Most importantly, from Table \ref{tab:Diri_solution}, we can see a second-order convergence when increasing the oversampling layers and decreasing the coarse mesh. This also echoes the idea that the convergence of $u^\mathup{ms}$ depends on the oversampling layers $N_{ov}$.

\subsubsection{Example 2}

Another demonstration comes from the Neumann and Robin conditions. In this problem, we first consider the same velocity field $\bbeta$:
\begin{equation}
    \begin{cases}
        -\nabla\cdot\left(\kappa(x_1,x_2) \nabla u\right)+\bbeta(x_1,x_2)\cdot\nabla u=f &\text{ for } (x_1,x_2)\in\Omega\\
        u(x_1,x_2)=\widetilde{g}(x_1,x_2)=x_1^2 + e^{x_1 x_2}&\text{ for }(x_1,x_2)\in \{0,1\}\times[0,1]\\
        bu + \bnu\cdot(A\nabla u -\bbeta u) = -1 &\text{ for } x_1 = 0 \text{ and } x_2 \in [0,1]\\
        bu + \bnu\cdot(A\nabla u -\bbeta u) = 1 &\text{ for } x_1 = 1 \text{ and } x_2 \in [0,1]\\
        bu + \bnu\cdot(A\nabla u -\bbeta u) = 1 &\text{ for } x_1 = (0.5,1] \text{ and } x_2 =0\\
        bu + \bnu\cdot(A\nabla u -\bbeta u) = 0 &\text{ for } x_1 = [0, 0.5] \text{ and } x_2 = 0
        \end{cases}
\end{equation}
where $b(x_1,x_2)= \kappa(x_1,x_2)$ is the Robin coefficient.

For the following results, the numbers in brackets show the relative error for varying numbers of oversampling layers.
 \begin{figure}[ht]
 \begin{subfigure}[b]{0.33\textwidth}
    \centering
    \includegraphics[width=\textwidth]{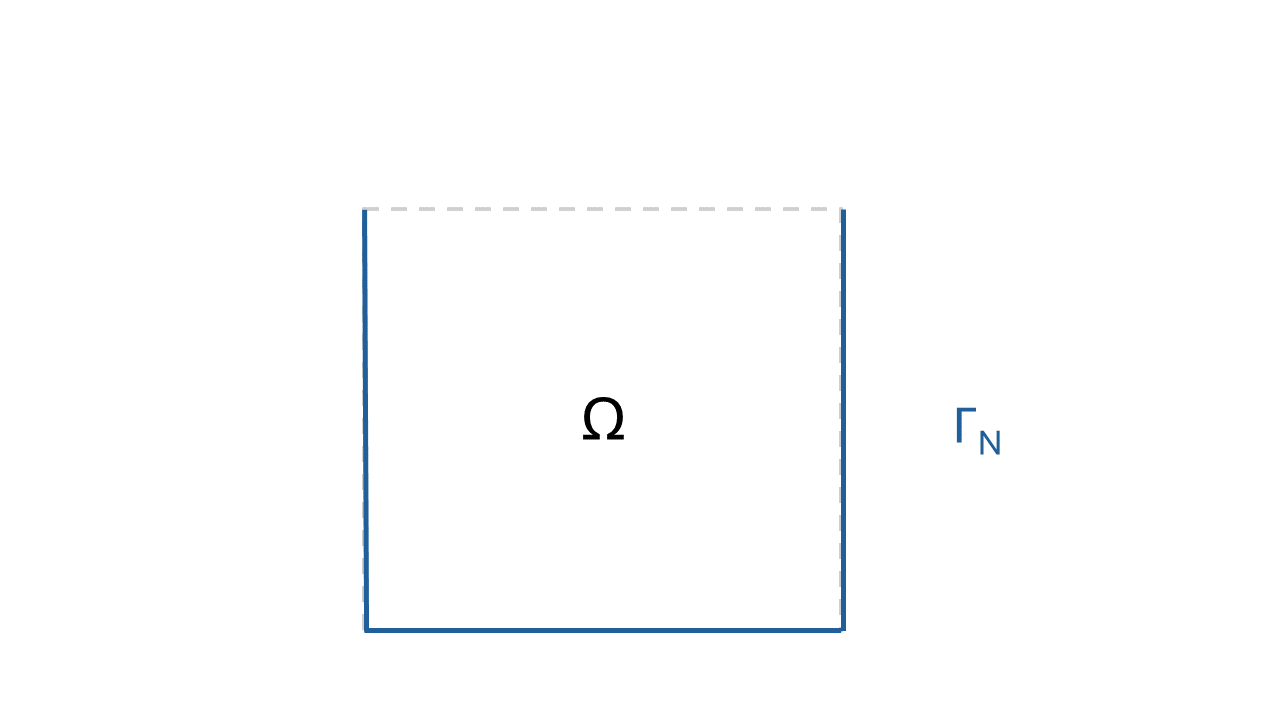}
    \caption{$\Gamma_{N}$}
    \label{fig:Neum_bound}
\end{subfigure}
\begin{subfigure}[b]{0.33\textwidth}
    \centering
    \includegraphics[width=\textwidth]{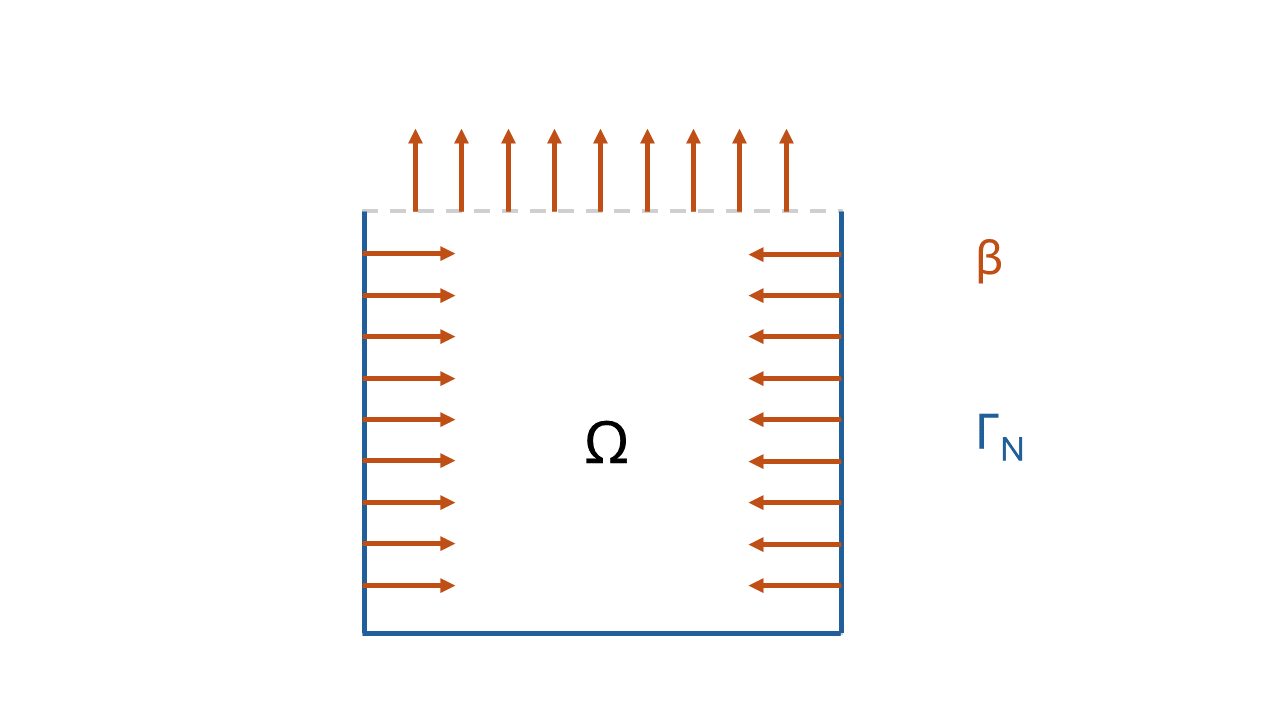}
    \caption{$\bbeta_\mathup{in}$}
    \label{fig:Neum_bound_inflow}
\end{subfigure}
\begin{subfigure}[b]{0.33\textwidth}
    \centering
    \includegraphics[width=\textwidth]{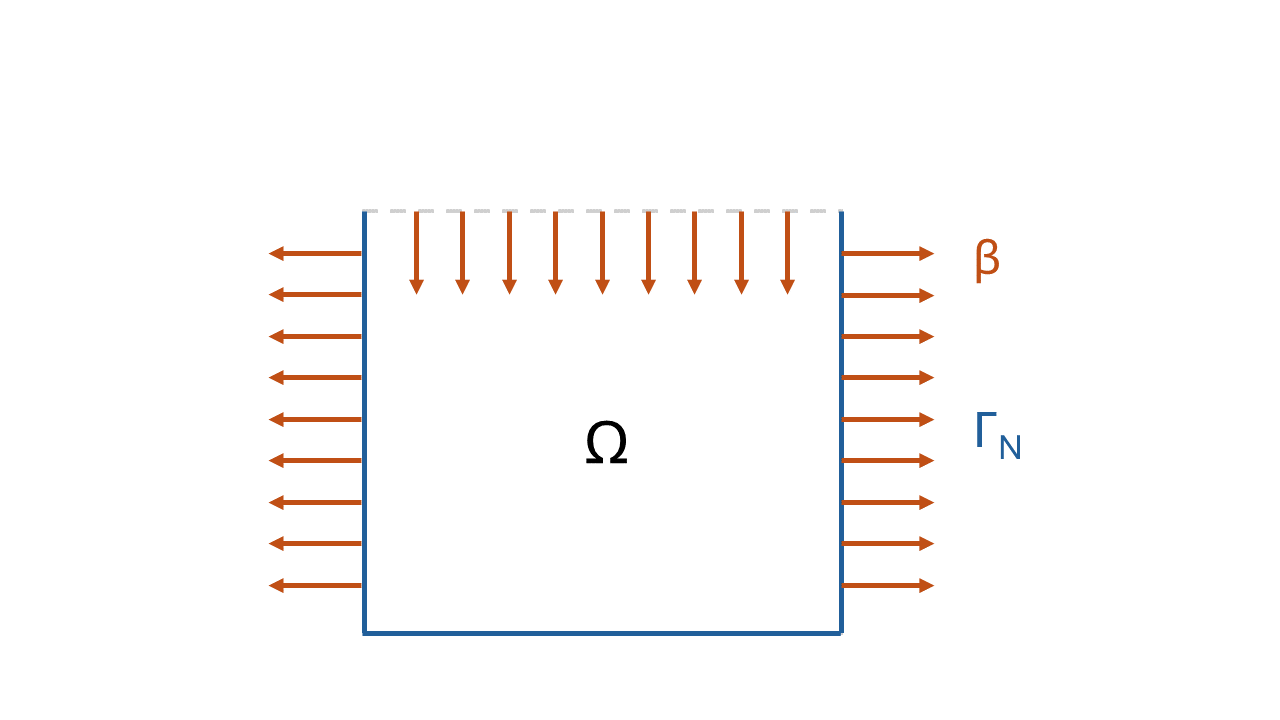}
    \caption{$\bbeta_\mathup{out}$}
    \label{fig:Neum_bound_outflow}
\end{subfigure}
\caption{(\ref{fig:Neum_bound}) Example 2 
 (\ref{fig:Neum_bound_inflow}) Example 3 (\ref{fig:Neum_bound_outflow}) Example 4}
\end{figure}

\begin{table}[ht]
\centering
\begin{tabular}{@{\extracolsep{\fill}}cccc}
\hline\hline
 & \multicolumn{1}{c}{1/10} & \multicolumn{1}{c}{1/20} & \multicolumn{1}{c}{1/40} \\\hline
    \multicolumn{1}{l}{$\Lambda$} & 2.273414 & 2.328069 & 3.185349 \\\hline
    $E^3_a$ & \textbf{6.22E-03} & 3.53E-03 & 6.08E-03 \\
    $E^4_a$ & 6.21E-03 & \textbf{3.13E-03(50.3\%)} & 1.64E-03 \\
    $E^5_a$ & 6.21E-03 & 3.13E-03 & \textbf{1.62E-03(51.8\%)} \\\hline
    $E^3_L$ & \textbf{5.52E-04} & 1.60E-04 & 1.28E-04 \\
    $E^4_L$ & 5.52E-04 & \textbf{1.60E-04(29.0\%)} & 6.20E-05 \\
    $E^5_L$ & 5.52E-04 & 1.60E-04 & \textbf{6.20E-05(9.92\%)}\\ \hline\hline
\end{tabular}
\caption{Example 2 with $\kappa_1/\kappa_0=10^4$, $l_m=3$}
\label{tab1.3}
\end{table}

From Table \ref{tab1.3}, we also can observe the second-order convergence with respect to $H$. Moreover, we compare the effect of the inflow conditions using the following two examples. 

\subsubsection{Example 3}

In this model, we consider the inflow condition on $\Gamma_N$.
\[\bbeta_\mathup{in}(x_1,x_2) = \bbeta + c_\mathup{flow}\left[\frac{1}{2}-x_1,x_1\right]^T.\]
Notice that $\bbeta_\mathup{in}\cdot\bnu\leq 0$ on $\Gamma_N$ as shown in Figure \ref{fig:Neum_bound_inflow}. The constant $c_\mathup{flow}$ is proportional to the magnitude of the velocity on the boundary $\Gamma_N$,

\begin{table}[ht]
\centering
\begin{tabular}{c|ccc}
\hline\hline
H       & 1/10     & 1/20     & 1/40     \\
\hline
$N^2_a$ & \textbf{1.15E-03} & 1.02E-03 & 1.63E-03 \\
$N^3_a$ & {8.15E-05} & 5.62E-05 & 1.33E-04 \\
$N^4_a$ & 7.35E-06 & \textbf{4.79E-06(41.6\%)}& 1.21E-05 \\
$N^5_a$ & 6.18E-07 & 4.29E-07 & {1.09E-06} \\
$N^6_a$ & 6.57E-08 & 1.75E-07 & \textbf{1.26E-07(2.63\%)}\\
\hline
$N^2_L$ & \textbf{9.08E-04}& 7.69E-04 & 8.97E-04 \\
$N^3_L$ & {5.71E-05}& 3.30E-05 & 7.55E-05 \\
$N^4_L$ & 4.95E-06 & \textbf{2.85E-06(0.31\%)}& 7.63E-06 \\
$N^5_L$ & 4.62E-07 & 2.77E-07 & {6.37E-07}\\
$N^6_L$ & 4.25E-08 & 1.08E-07 & \textbf{6.83E-08(3.40\%)}\\
\hline\hline
\end{tabular}
\caption{Example 3: $\mathcal{N}^m q$ error with $l_m=3$ and $\kappa_1/\kappa_0=10^3$ and $c_\mathup{flow}=2$}
\label{tab:3.5}
\end{table}

\begin{table}[ht]
\centering
\begin{tabular}{c|ccc}
\hline\hline
H       & 1/10     & 1/20     & 1/40     \\
\hline
$E^2_a$ & \textbf{1.90E-01} & 3.73E-01 & 5.59E-01 \\
$E^3_a$ & {1.13E-02} & 2.29E-02 & 1.88E-01 \\
$E^4_a$ & 8.77E-03 & \textbf{5.06E-03(2.67\%)}& 7.85E-03 \\
$E^5_a$ & 8.76E-03 & 5.01E-03 & {1.66E-03 }\\
$E^6_a$ & 8.76E-03 & 5.02E-03 & \textbf{1.60E-03(31.6\%)}\\
\hline
$E^2_L$ & \textbf{1.60E-01} & 3.68E-01 & 6.73E-01 \\
$E^3_L$ & {1.04E-03} & 3.48E-03 & 1.23E-01 \\
$E^4_L$ & 8.87E-04 & \textbf{3.75E-04(0.23\%)}& 2.55E-04 \\
$E^5_L$ & 8.88E-04 & 3.76E-04 & {6.83E-05} \\
$E^6_L$ & 8.88E-04 & 3.76E-04 & \textbf{6.84E-05(18.2\%)}\\
\hline\hline
\end{tabular}
\caption{Example 3: solution error with $l_m=3$ and $\kappa_1/\kappa_0=10^3$ and $c_\mathup{flow}=2$}
\label{tab:3.6}
\end{table}

As can be seen in Tables \ref{tab:3.5} and \ref{tab:3.6}, similar observations of the results are shown, resembling our theoretical analysis and the numerical results in the examples. 

\subsubsection{Example 4}

We consider the same setting as Example 3, but with the following velocity field 
$\bbeta_\mathup{out} = -\bbeta_\mathup{in}$
with $c_\mathup{flow}>0$.
Note that $\nabla\cdot\bbeta_\mathup{out}=0$ on $\Omega$ but $\bbeta_\mathup{out}\cdot\bnu > 0 $ on $x_1=0$ and $x_1=1$, shown in Figure \ref{fig:Neum_bound_outflow}.

\begin{table}[ht]
\centering
\begin{tabular}{c|ccc}
\hline\hline
H & \multicolumn{1}{c}{1/10} & \multicolumn{1}{c}{1/20} & \multicolumn{1}{c}{1/40} \\\hline
$\Lambda$&0.53025 &0.97397 & 3.25683\\\hline
$E^3_a$ & \textbf{1.30E-02} & 3.45E-02 & 1.48E-01 \\
$E^4_a$ & 4.96E-03 & \textbf{3.65E-03(28.1\%)}& 1.15E-02 \\
$E^5_a$ & 4.87E-03 & 2.57E-03 & \textbf{1.27E-03(34.8\%)}\\\hline
$E^3_L$ & \textbf{6.34E-04} & 4.27E-03 & 7.46E-02 \\
$E^4_L$ & 2.28E-04 & \textbf{1.01E-04(15.9\%)}& 5.20E-04 \\
$E^5_L$ & 2.29E-04 & 8.60E-05 & \textbf{1.37E-05(13.6\%)}\\\hline\hline
\end{tabular}
\caption{Example 4 with $l_m=3$, $c_\mathup{flow}=3$ and  $\kappa_1/\kappa_0=10^3$}
\label{tab:outflow}
\end{table}

\begin{figure}[ht]
    \centering
    \includegraphics[scale=0.6]{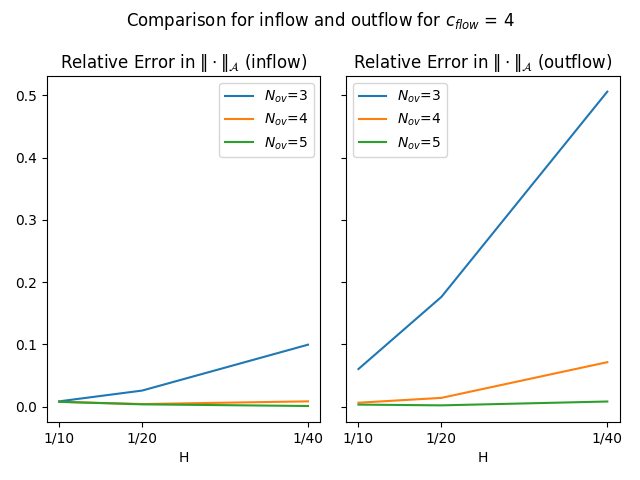}
    \caption{Comparison of inflow and outflow case in $L^2$ norm: $l_m=3$ and $\kappa_1/\kappa_0=10^3$}
    \label{fig:in_outflow_A4}
\end{figure}
\begin{figure}[ht]
    \centering
    \includegraphics[scale=0.6]{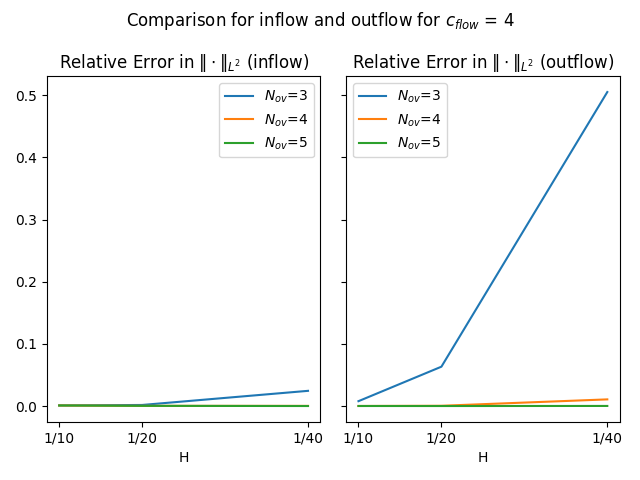}
    \caption{Comparison of inflow and outflow case in $\mathcal{A}$ norm: $l_m=3$ and $\kappa_1/\kappa_0=10^3$}
    \label{fig:in_outflow_L4}
\end{figure}

In Table \ref{tab:outflow}, second-order convergence in $H$ is still observed when given a large enough number of oversampling layers and a fixed $c_\mathup{flow}=3$. However, as we increase the velocity field $c_\mathup{flow}=4$, we see that the problem is more demanding. Without a larger number of oversampling layers, reducing the coarse mesh $H$ alone may not be enough to improve the approximation. However, when given a sufficient number of layers, the job can still be done. The comparison is more apparent in Figure \ref{fig:in_outflow_A4} and Figure \ref{fig:in_outflow_L4}, where the outflow case converges much slower in both $\|\cdot\|_{L^2}$ and $\|\cdot\|_\mathcal{A}$ compared to the inflow case.

\section{Time Dependent Convection-Diffusion Initial Boundary Value Problems}
\label{chapter:2}
\subsection{Derivation of the Method}
We present the convection diffusion initial boundary value problem as follows:
\begin{equation}
\begin{cases}
    \partial_t u(x,t) -\nabla \cdot(\boldsymbol{A}(x) \nabla u(x,t))+ \bbeta(x) \cdot\nabla u(x,t) = f(x,t) &\text{ on }\Omega\times (0,T]\\
    u(x,t) = g(x,t) &\text{ on }\Gamma_D\times (0,T]\\
    b(x) u(x,t) + \bnu \cdot (\boldsymbol{A}(x) \nabla u(x.t)- \bbeta(x) u(x,t))= q(x,t) &\text{ on }\Gamma_N\times (0,T]\\
    u(x,0) = u_\mathup{init}(x) &\text{ on } \Omega.
\end{cases}
\label{eqn:unsteadyBVP}
\end{equation}
Notice $f$ is still independent of the solution $u$.
The variational formulation becomes: 
find $u_0(\cdot,t)\in V$ such that for $v\in V$,
\begin{align}
    (\partial_t u_0,v) + \mathcal{A}(u_0,v) &= (f,v) + (q,v)_{\Gamma_N} - \mathcal{A}(\widetilde{g},v) - (\widetilde{g}_t,v)\label{eqn:unsteadyweakBVP}\\
    (u_0(\cdot, 0),v) &= (u_\mathup{init}(x)-\widetilde{g}(\cdot,0),v)\label{eqn:unsteadyInitial}
\end{align}
and the actual solution would be $u(\cdot,t) = u_{0}(\cdot,t)+\widetilde{g}(\cdot,t)$.
We will use the same auxiliary spaces $V^\mathup{aux}$ and multiscale space $V^m_\mathup{ms}$ as in the previous section, and thereby the same set of correctors $\mathcal{D}^m \widetilde{g}$ and $\mathcal{N}^m q$. 
In particular, for $t\in (0,T]$,
\begin{enumerate}
    \item Find $\mathcal{D}^m_i \widetilde{g}(\cdot, t)\in V^m_i$ such that $v\in V^m_i$,
    \begin{equation}
    \begin{cases}
        (\mathcal{D}^m_i \widetilde{g}_t, v) + \mathcal{B}(\mathcal{D}^m_i \widetilde{g},v) = (\widetilde{g}_t, v)_{(K_i)} + \mathcal{A}_{(K_i)}(\widetilde{g},v)\\
        \mathcal{B}(\mathcal{D}^m_i \widetilde{g}(\cdot, 0),v) = \mathcal{A}_{(K_i)}(\widetilde{g}(\cdot,0),v).
    \end{cases}
    \label{eq:dg_time_variant_ms}
    \end{equation}
    Denote $\mathcal{D}^m \widetilde{g} = \sum_{i=1}^{N}\mathcal{D}^m_i \widetilde{g}$.
    \item Find $\mathcal{N}^m_i q(\cdot, t) \in V^m_i$ such that $v\in V^m_i$,
    \begin{equation}
    \begin{cases}
        (\mathcal{N}^m_i q_t, v) + \mathcal{B}(\mathcal{N}^m_i q,v)= \int_{\Gamma_N\cap \partial K_i}qv \di \sigma\\
        \mathcal{B}(\mathcal{N}^m_i q(\cdot, 0),v) = \int_{\Gamma_N\cap\partial K_i }q(\cdot, 0)v \di \sigma.
    \end{cases}
    \label{eq:nq_time_variant_ms}
    \end{equation}
    Denote $\mathcal{N}^m q = \sum^N_{i=1} \mathcal{N}^m_i q.$
    
\item find $w^m(\cdot, t) \in V^m_\mathup{ms}$ such that for $v\in $$V^m_\mathup{ms}$
\begin{equation}
\begin{aligned}
(w^m_t,v) + \mathcal{A}(w^m,v) &= (f,v) + (q,v)_{\Gamma_N} -\mathcal{A}(\widetilde{g},v)-(\widetilde{g}_t,v) \\
& + (\mathcal{D}^m \widetilde{g}_t, v) + \mathcal{A}(\mathcal{D}^m \widetilde{g}, v) -(\mathcal{N}^m q_t,v) -\mathcal{A}(\mathcal{N}^m q, v) ,
\end{aligned}
\label{eqn:ms_t_weakBVP}
\end{equation}
\end{enumerate}
\begin{equation}
    (w^m(\cdot, 0), v) = \left(u_\mathup{init}-\widetilde{g}(\cdot,0)+\mathcal{D}^m \widetilde{g}(\cdot,0)-\mathcal{N}^m q(\cdot,0),v\right) .
\end{equation} 

Then the multiscale approximation becomes 
\begin{equation}
u^\mathup{ms} = u^\mathup{ms}_0 + \widetilde{g} = w^m- \mathcal{D}^m \widetilde{g} + \mathcal{N}^m q+\widetilde{g}.
    \label{eq:time_dpdt_solution}
\end{equation}
\subsection{Analysis}

To test the performance of the oversampling layers, we define $\mathcal{D}^\mathup{glo}\widetilde{g}=\sum^N_{i=1}\mathcal{D}^\mathup{glo}_i\widetilde{g}$ and $\mathcal{N}^\mathup{glo} q=\sum^N_{i=1}\mathcal{N}^\mathup{glo}_i q$ where $\mathcal{D}^\mathup{glo}_i\widetilde{g},\mathcal{N}^\mathup{glo}_i q\in V^\mathup{glo}_\mathup{ms}$ satisfies that for all $v\in V$,
\begin{equation}
    \begin{cases}
        (\mathcal{D}^\mathup{glo}_i \widetilde{g}_t, v) + \mathcal{B}(\mathcal{D}^\mathup{glo}_i \widetilde{g},v) = (\widetilde{g}_t, v)_{(K_i)} + \mathcal{A}_{(K_i)}(\widetilde{g},v)\\
        \mathcal{B}(\mathcal{D}^\mathup{glo}_i \widetilde{g}(\cdot, 0),v) = \mathcal{A}_{(K_i)}(\widetilde{g}(\cdot,0),v),
    \end{cases}
    \label{eq:dg_time_variant}
    \end{equation},
    and 
    \begin{equation}
    \begin{cases}
        (\mathcal{N}^\mathup{glo}_i q_t, v) + \mathcal{B}(\mathcal{N}^\mathup{glo}_i q,v)= \int_{\Gamma_N\cap \partial K_i}qv \di \sigma\\
        \mathcal{B}(\mathcal{N}^\mathup{glo}_i q(\cdot, 0),v) = \int_{\Gamma_N\cap\partial K_i }q(\cdot, 0)v \di \sigma.
    \end{cases}
    \label{eq:nq_time_variant}
    \end{equation}

We will give an overview of the analysis.
Define for $v\in V$,
$\|v\|_\mathcal{E}^2 = \|v(\cdot,T)\|_{L^2}^2 + \int^T_0 \|v\|_\mathcal{B}^2$.
Note that since $\|\cdot\|_\mathcal{A}$ is a quasi-norm, so are $\|\cdot\|_\mathcal{B}$ and $\|\cdot\|_\mathcal{E}$. 
Suppose $\widetilde{u}(\cdot,t)$ is the elliptic projection of the solution $u$, i.e.
\[\mathcal{A}(u-\widetilde{u},v) =0 \text{ for } v\in V^\mathup{glo}_\mathup{ms}.\]
Akin to the previous treatment, our strategy is to decompose $u-v$ into two parts:
\[
u-u^\mathup{ms} = (u-\widetilde{u})+ (\widetilde{u}-u^\mathup{ms}).
\]
The error of the former term is computable while that of the latter term can be bounded by a specific choice $v\in V^m_\mathup{ms}$.
In particular, for any $v\in V^m_\mathup{ms}$,
\begin{align*}
    &\int^T_0\left[((u-u^\mathup{ms})_t,u-u^\mathup{ms}) + \mathcal{A}(u-u^\mathup{ms},u-u^\mathup{ms})\right]\\
    &=\int^T_0\left[((u-u^\mathup{ms})_t,u-v) + \mathcal{A}(u-u^\mathup{ms},u-v)\right]\\
     &\leq \left.(u-u^\mathup{ms},u-v)\right|^T_0
     -\int^T_0 ((u-v)_t,u-u^\mathup{ms}) + \int^T_0 \mathcal{A}(u-u^\mathup{ms},u-v)\\
     &\leq \|(u-u^\mathup{ms})(\cdot,T)\|_{L^2}\|(u-v)(\cdot,T)\|_{L^2}
     +\|(u-u^\mathup{ms})(\cdot,0)\|_{L^2}\|(u-v)(\cdot,0)\|_{L^2}\\
     &+\sqrt{\int^T_0\|(u-v)_t\|_{L^2}^2}\sqrt{\int^T_0\|u-u^\mathup{ms}\|_{L^2}^2}
     +\frac{3}{4}{\int^T_0\|u-u^\mathup{ms}\|_\mathcal{A}^2}
     +\frac{1}{3}\overline{C}^2{\int^T_0\|u-v\|_\mathcal{B}^2}.
\end{align*}

Hence, by the repeated use of Young's inequality and the Cauchy Schwartz inequality, 
\begin{equation}
\begin{aligned}
    \|(u-u^\mathup{ms})(\cdot,T)\|_{L^2}^2+\int^T_0\|u-u^\mathup{ms}\|_\mathcal{A}^2
    &\leq 4\|(u-u^\mathup{ms})(\cdot,0)\|_{L^2}^2 
     + 2\|(u-v)(\cdot,0)\|_{L^2}^2+ 4\overline{C}^2\|u-v\|_{\mathcal{E}}^2\\
    &+4\sqrt{\int^T_0\|(u-v)_t\|_{L^2}^2}\sqrt{\int^T_0\|u-u^\mathup{ms}\|_{L^2}^2}\\
    &=: (i) + (ii) + (iii) + (iv).
\end{aligned}
\label{eq:main_t}
\end{equation}

Note that $(i)$ has been investigated in the time-independent case. 
Now, define $w^\mathup{glo}\in V^\mathup{glo}_\mathup{ms}$ such that for any $v\in V$,
\begin{equation}
\mathcal{A}(w^\mathup{glo},v) = \mathcal{A}(\widetilde{u} - \mathcal{D}^\mathup{glo}\widetilde{g} + \mathcal{N}^\mathup{glo} q,v). 
\label{eq:elliptic_projection_t}
\end{equation}

Then, with the same set of $\Rglo\coloneqq\sum^N_{i=1}\Rgloi$ and $\Rmm\coloneqq\sum^N_{i=1}\Rmmi$ operators in the last section, there is $\varphi(\cdot,t) \in L^2(\Omega)$ such that
\begin{align*}
    (\Rgloi\varphi_t,v)+\mathcal{B}(\Rgloi\varphi,v)=s(\pi_i \varphi, \pi v)&\text{ for }v\in V,\\
    (\Rmmi\varphi_t,v)+\mathcal{B}(\Rmmi\varphi,v)= s(\pi_i \varphi,\pi v)&\text{ for }v\in V^m_\mathup{ms}.
\end{align*}
with initial conditions $\mathcal{B}(\Rgloi\varphi(\cdot,0),v)=s(\pi_i\varphi(\cdot,0),\pi v)$ for $v\in V$ and $\mathcal{B}(\Rmmi\varphi(\cdot,0),v)=s(\pi_i\varphi(\cdot,0),\pi v)$ for $v\in V^m_\mathup{ms}$. 
Then, by the surjectivity of $\Rglo$, we can find $\varphi_*(\cdot,t)\in L^2(\Omega)$ such that 
\[
\Rglo\varphi_* = w^\mathup{glo} \text{ and define } w^m_*\coloneqq \Rmm\varphi_*.
\]

By putting $v=w^m_* - \mathcal{D}^m \widetilde{g} + \mathcal{N}^m q$, we can further decompose 
\[
 u - v = (u-\widetilde{u}) + (w^\mathup{glo}-w^m_*)+ (\mathcal{D}^\mathup{glo}\widetilde{g}-\mathcal{D}^m \widetilde{g}) + (\mathcal{N}^\mathup{glo} q-\mathcal{N}^m q).
\]

The error analysis of these terms will suffice that of the remaining terms $(ii), (iii)$, and $(iv)$ in equation \eqref{eq:main_t}. 
$u-\widetilde{u}$ is first dealt with
and the rest in another abstract problem.
\subsubsection{Elliptic Projection}
\label{section:elliptic_t}

We first give an error bound for the elliptic projection.

\begin{lemma}
Let $\widetilde{u}_0\in V^\mathup{glo}_\mathup{ms}$ be the elliptic projection of $u_0$ onto $V^\mathup{glo}_\mathup{ms}$, i.e.
\[\mathcal{A}(\widetilde{u}_0,v) = \mathcal{A}(u_0,v) \text{ for }v\in V^\mathup{glo}_\mathup{ms}.\]
Let $\widetilde{u} = \widetilde{u}_0 + \widetilde{g}$. Then,
    \begin{equation}
    \|u-\widetilde{u}\|_\mathcal{A}\leq \Lambda^{-1/2}\|\widetilde{\kappa}^{-1/2}(f-u_t)\|_{L^2}.    
    \end{equation}
    \label{lemma:bound_t_glo}
\end{lemma}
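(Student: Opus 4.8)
The plan is to mirror the proof of Theorem \ref{thm:1} for the steady problem; the only genuinely new feature is the time derivative $u_t$, which will be absorbed into the data by replacing the source $f$ with $f-u_t$. I start from $e := u-\widetilde{u} = u_0-\widetilde{u}_0$, which lies in $V$ because $u_0\in V$ and $\widetilde{u}_0\in V^\mathup{glo}_\mathup{ms}\subset V$. The defining relation of the elliptic projection gives $\mathcal{A}(e,v)=\mathcal{A}(u_0-\widetilde{u}_0,v)=0$ for every $v\in V^\mathup{glo}_\mathup{ms}$, and Lemma \ref{lemma:orthogonal} then yields $\pi e=0$. Using $\pi e=0$ once more, now with $\widetilde{u}_0\in V^\mathup{glo}_\mathup{ms}$, the first assertion of Lemma \ref{lemma:orthogonal} gives $\mathcal{A}(\widetilde{u}_0,e)=0$, whence
\[
\|e\|_\mathcal{A}^2=\mathcal{A}(e,e)=\mathcal{A}(u_0,e).
\]

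Next I test the weak formulation \eqref{eqn:unsteadyweakBVP} with $v=e$, rewriting $\mathcal{A}(u_0,e)$ as $(f,e)+(q,e)_{\Gamma_N}-\mathcal{A}(\widetilde{g},e)-(\widetilde{g}_t,e)-(\partial_t u_0,e)$. Since $u=u_0+\widetilde{g}$, the two time-derivative terms recombine into $(u_t,e)=(\partial_t u_0,e)+(\widetilde{g}_t,e)$, and the boundary-data terms $\mathcal{A}(\widetilde{g},e)$ and $(q,e)_{\Gamma_N}$ are disposed of exactly as in the proof of Theorem \ref{thm:1} (using $\pi e=0$ and the defining relations of the boundary data). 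What remains is
\[
\|e\|_\mathcal{A}^2=(f-u_t,e)=\int_\Omega (f-u_t)\,e .
\]
A Cauchy--Schwarz inequality with the weight $\widetilde{\kappa}$ bounds the right-hand side by $\|\widetilde{\kappa}^{-1/2}(f-u_t)\|_{L^2}\,\|e\|_s$, and since $\pi e=0$ the eigenvalue inequality \eqref{lemma:eigen} of Lemma \ref{lemma:eigen_ineq}, summed over the coarse elements, gives $\|e\|_s=\|e-\pi e\|_s\le\Lambda^{-1/2}\|e\|_\mathcal{A}$. Combining these estimates and cancelling one factor of $\|e\|_\mathcal{A}$ yields the stated bound.

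I expect the delicate point to be the bookkeeping in the second step: one must check carefully that $\partial_t u_0$ and $\widetilde{g}_t$ genuinely reassemble into $u_t$ and that the $\widetilde{g}$- and $q$-contributions really cancel, i.e.\ that the elliptic-projection error is insensitive to the boundary data --- the hypothesis $\pi e=0$, and therefore Lemma \ref{lemma:orthogonal}, is exactly what makes those terms vanish. Everything else is a routine repetition of the time-independent estimate, now with $f-u_t$ in place of $f$.
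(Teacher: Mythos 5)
Your proof is correct and follows essentially the same route as the paper's: the orthogonality of the elliptic projection gives $\pi e=0$ via Lemma \ref{lemma:orthogonal}, whence $\mathcal{A}(\widetilde{u}_0,e)=0$, and the weak form combined with the weighted Cauchy--Schwarz and spectral inequalities yields the bound. The one point worth making explicit is that, since $\widetilde{u}_0$ here is the bare elliptic projection (no boundary correctors enter its definition), the vanishing of $\mathcal{A}(\widetilde{g},e)$ and $(q,e)_{\Gamma_N}$ must be obtained by invoking the steady global correctors as auxiliary objects --- $\mathcal{A}_{(K_i)}(\widetilde{g},e)=\mathcal{B}(\mathcal{D}^\mathup{glo}_i\widetilde{g},e)=\mathcal{A}(\mathcal{D}^\mathup{glo}_i\widetilde{g},e)=0$ because $\pi e=0$, and likewise for $q$ --- which is exactly the mechanism you gesture at, and is in fact more careful than the paper's own proof, which silently drops these terms.
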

\begin{proof}
    By the definition of $\widetilde{u}$, 
    since $\mathcal{A}(\widetilde{u}-u, v) = 0$ for $v\in V^\mathup{glo}_\mathup{ms}$,
    $\pi (\widetilde{u}-u) = 0$.
    Then $\mathcal{A}(\widetilde{u}_0, \widetilde{u}-u) = 0$.
    This grants us
    \begin{align*}
        \|\widetilde{u}-{u}\|_\mathcal{A}^2 
        &= \|\widetilde{u}_0-u_0\|_\mathcal{A}^2\\
        &= \mathcal{A}(u_0, \widetilde{u} - u)\\
        &= (f,\widetilde{u}-u) + ((u_0)_t,\widetilde{u}-u)\\
        &= (f-u_t, \widetilde{u}-u)\\
        &\leq \Lambda^{-1/2}\|\widetilde{\kappa}^{-1/2}(f-u_t)\|\|\widetilde{u}-u\|_\mathcal{A}.
    \end{align*}
\end{proof}

We also can obtain the $L^2$-norm of the global estimate $\widetilde{u}$.
\begin{lemma} With the notations in lemma \ref{lemma:bound_t_glo}, we have
    \[\|u-\widetilde{u}\|_{L^2}\leq \overline{C} Hc_\#\Lambda^{-1/2}\|\widetilde{\kappa}^{-1/2}(f-u_t)\|_{L^2}.\]
\end{lemma}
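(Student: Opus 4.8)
### Proof plan for the $L^2$-estimate of the elliptic projection

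The plan is to reduce the $L^2$-bound to the energy bound from the previous lemma via a duality (Aubin–Nitsche) argument, exactly as the abstract bound $\|v\|_{L^2}\leq Hc_\#\|v\|_\mathcal{B}$ in Lemma~\ref{lemma:overline_C} suggests. Set $e = u - \widetilde{u}$; from Lemma~\ref{lemma:bound_t_glo} we already know $\pi e = 0$ and $\|e\|_\mathcal{A}\leq \Lambda^{-1/2}\|\widetilde{\kappa}^{-1/2}(f-u_t)\|_{L^2}$. First I would introduce an auxiliary function: by Lemma~\ref{lemma:Cinv} applied to $e$ (or rather to a representative with the right projection) one gets $\widehat{e}\in V$ with $\pi\widehat{e}=\pi e = 0$; but since $\pi e = 0$ this is not yet enough, so instead I expect the cleaner route is to test the defining relation of $e$ against a well-chosen element and invoke the norm-equivalence chain directly.

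Concretely, the key steps in order: (1) recall $\pi e = 0$, so $\|e\|_s^2 = \|e-\pi e\|_s^2 \leq \Lambda^{-1}\|e\|_a^2 \leq \Lambda^{-1}\|e\|_\mathcal{A}^2$ by inequality~\eqref{lemma:eigen} summed over coarse blocks; hence $\|e\|_\mathcal{B}^2 = \|e\|_\mathcal{A}^2 + \|\pi e\|_s^2 = \|e\|_\mathcal{A}^2$. (2) Apply the first estimate of Lemma~\ref{lemma:overline_C}, $\|e\|_{L^2}\leq Hc_\#\|e\|_\mathcal{B} = Hc_\#\|e\|_\mathcal{A}$. (3) Substitute the bound on $\|e\|_\mathcal{A}$ from Lemma~\ref{lemma:bound_t_glo}. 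This already yields $\|u-\widetilde{u}\|_{L^2}\leq Hc_\#\Lambda^{-1/2}\|\widetilde{\kappa}^{-1/2}(f-u_t)\|_{L^2}$, which is the claimed bound up to the extra factor $\overline{C}$; since $\overline{C}\geq 1$ (as $\overline{C}=\sqrt{2}(1+\Lambda^{-1/2})\max\{1,\dots\}\geq\sqrt 2$), the stated inequality follows a fortiori. If instead a genuine duality argument is intended — solving an adjoint problem $\mathcal{A}(v,z)=(e,v)$ and using Galerkin orthogonality $\mathcal{A}(e,\cdot)=0$ on $V^\mathup{glo}_\mathup{ms}$ together with the approximability of $z$ by its elliptic projection $\widetilde z$ — then the steps are: write $\|e\|_{L^2}^2 = \mathcal{A}(e,z) = \mathcal{A}(e, z-\widetilde z)\leq \overline{C}\|e\|_\mathcal{A}\|z-\widetilde z\|_\mathcal{B}$, bound $\|z-\widetilde z\|_\mathcal{B}\leq Hc_\#\|z\|_{\text{something}}$ via the same chain, use elliptic regularity $\|z\|\lesssim\|e\|_{L^2}$, and cancel one factor of $\|e\|_{L^2}$; this route naturally produces the $\overline{C}$ prefactor.

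The main obstacle I anticipate is not the algebra but pinning down which of these two interpretations the authors use, and in the duality version, justifying the regularity/approximation estimate for the adjoint solution $z$ in the high-contrast, mixed-boundary-condition setting (the constant $C_\mathup{tr}$ and the Neumann/Robin terms must be carried through). I would first attempt the short route (steps (1)–(3) above), since it is self-contained from results already proved in the excerpt and directly explains the appearance of $\overline{C}$ by monotonicity; only if the desired constant were genuinely sharp would the adjoint argument be necessary.
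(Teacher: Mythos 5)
Your primary route is correct and is genuinely more elementary than what the paper does. The paper proves this lemma by an Aubin--Nitsche duality argument: it introduces $z\in V$ with $\mathcal{A}(z,v)=(u-\widetilde{u},v)$ for all $v\in V$ together with its Galerkin projection $\widetilde{z}\in V^\mathup{glo}_\mathup{ms}$, derives $\|z-\widetilde{z}\|_\mathcal{A}\le Hc_\#\|u-\widetilde{u}\|_{L^2}$ by testing the error equation with $z-\widetilde{z}$ itself (so, contrary to the obstacle you anticipate, no elliptic regularity for the adjoint solution is ever invoked), and then concludes via $\|u-\widetilde{u}\|_{L^2}^2=\mathcal{A}(z-\widetilde{z},u-\widetilde{u})\le\overline{C}\|z-\widetilde{z}\|_\mathcal{A}\|u-\widetilde{u}\|_\mathcal{A}$; this is essentially your second sketch, and it is indeed where the prefactor $\overline{C}$ originates. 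Your first route --- note $\pi(u-\widetilde{u})=0$ by Lemma \ref{lemma:orthogonal}, hence $\|u-\widetilde{u}\|_\mathcal{B}=\|u-\widetilde{u}\|_\mathcal{A}$, apply the bound $\|v\|_{L^2}\le Hc_\#\|v\|_\mathcal{B}$ of Lemma \ref{lemma:overline_C}, and insert the energy estimate of Lemma \ref{lemma:bound_t_glo} --- bypasses the adjoint problem entirely and gives the estimate without the factor $\overline{C}$; since $\overline{C}\ge\sqrt{2}$, the stated inequality follows a fortiori. The duality argument buys the gain of a power of $H$ over the energy error for a general Galerkin error, whereas your shortcut exploits that here this gain is already encoded in $c_\#$ once one knows $\pi(u-\widetilde{u})=0$; both are valid, and yours is arguably the cleaner of the two for this particular statement.
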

\begin{proof}
    Let $z\in V$ and $\widetilde{z}\in V^\mathup{glo}_\mathup{ms}$ such that \begin{align*}
    \mathcal{A}(z,v) = (u-\widetilde{u},v) 
    &\text{ for }v\in V \\
    \mathcal{A}(\widetilde{z},v) = (u-\widetilde{u},v) &\text{ for } v \in V^\mathup{glo}_\mathup{ms}.
    \end{align*}
    Note that $\mathcal{A}(z-\widetilde{z},v) = 0$ for $v \in V^\mathup{glo}_\mathup{ms}$. This grants us $\mathcal{A}(\widetilde{z},z-\widetilde{z})=0$ and $\mathcal{A}(\widetilde{z}, u-\widetilde{u})=0$.
    Then, 
    \begin{align*}
        \|z-\widetilde{z}\|_\mathcal{A}^2 &= \mathcal{A}(z-\widetilde{z}, z-\widetilde{z})\\
        &= \mathcal{A}(z,z-\widetilde{z})\\
        &= (u-\widetilde{u}, z-\widetilde{z})\\
        &\leq \|u-\widetilde{u}\|_{L^2}\|z-\widetilde{z}\|_{L^2}\\
        &\leq Hc_\#\|u-\widetilde{u}\|_{L^2}\|z-\widetilde{z}\|_\mathcal{A}.
    \end{align*}
    So, 
    \[\|z-\widetilde{z}\|_\mathcal{A}\leq Hc_\#\|u-\widetilde{u}\|_{L^2}.\]
    
    \noindent
    Now, 
    \begin{align*}
        \|u-\widetilde{u}\|_{L^2}^2 = \mathcal{A}(z, u-\widetilde{u})
        = \mathcal{A}(z-\widetilde{z},u-\widetilde{u})
        \leq \overline{C}\|z-\widetilde{z}\|_{\mathcal{A}}\|u-\widetilde{u}\|_\mathcal{A}.
    \end{align*}
    Combining the results, the lemma is proved.
\end{proof}
\begin{remark}
    Following the similar lines of arguments, we can obtain
    \begin{equation}
        \|(u-\widetilde{u})_t\|_{L^2}\leq \overline{C} Hc_\#\Lambda^{-1/2}\|\widetilde{\kappa}^{-1/2}(f_t-u_{tt})\|_{L^2}.
        \label{eq: glob_t}
    \end{equation}
    Also note that $\widetilde{\kappa}^{-1/2}=O(H)$. So the $L^2$-error is also second order with respect to $H$.
\end{remark}

\subsubsection{Abstract Problem}
\label{section:abstract_t}

We now move on to the analysis of the corrector. The main idea is that the error propagation has an exponential decay with respect to the oversampling layers, similar to the time-independent case.
\begin{Problem}
    Let $K_i\in \mathcal{T}^H$ and $t_i(\cdot,t)$, $t_i^0\in V^\prime$ such that $\left<t_i,v\right>=0$ and $\left<t_i^0,v\right>=0$ for any $v\in V$ with $supp(v)\subset \Omega\backslash K_i$.
    Define $\mathcal{P}_i(\cdot,t):V^\prime\rightarrow V$ such that for all $v(\cdot,t)\in V$,
    \begin{equation}
    \begin{cases}
        (\Piti_t,v)+\mathcal{B}(\Piti,v)=\left<t_i,v\right>\\
        \mathcal{B}(\Piti(\cdot,0),v)=\left<t_i^0,v\right>,
        \label{eq:piti_t}
    \end{cases}
    \end{equation}
    and $\mathcal{P}^m_i:V^\prime\rightarrow V^m_i$ with
    \begin{equation}
        \begin{cases} 
            (\Pmiti_t,v)+\mathcal{B}(\Pmiti,v)=\left<t_i,v\right>\\
            \mathcal{B}(\Pmiti(\cdot,0),v)=\left<t_i^0,v\right>.
        \end{cases}
    \end{equation}
\end{Problem}

We aim to estimate 
    \[\left\|\sum^N_{i=1}\Piti-\Pmiti\right\|_\mathcal{E}^2 =\left\|\sum^N_{i=1}(\Piti-\Pmiti)(\cdot,T)\right\|_{L^2}^2+ \int^T_0\left\|\sum^N_{i=1}\Piti-\Pmiti\right\|_\mathcal{B}^2 \di t.\]

One should note that the initial condition here is exactly the abstract problem in the previous section. The results are carried over here.
In addition, we will further define two norms for our analysis,
\[
\|t_i\| = \max_{v\in V}\frac{\left<t_i,v\right>}{\|v\|_\mathcal{B}} \text{ and }
\|t^0_i\| = \max_{v\in V}\frac{\left<t^0_i,v\right>}{\|v\|_\mathcal{B}}.
\]
\begin{lemma}
\label{lemma:bound_operator}
Let $c_\#\coloneqq \beta_0^{-1}\kappa_1^{-1/2}\sqrt{1+\Lambda^{-1}}$. Then
    \begin{equation}
        \|\Piti(\cdot,0)\|_{L^2}^2\leq H^2 c_\#^2\|t^0_i\|^2,
    \end{equation}
    and
    \begin{equation}
        \|\Piti\|_\mathcal{E}^2\leq H^2 c_\#^2 \|t^0_i\|^2 + \int^T_0 \|t_i\|^2.
    \end{equation}
\end{lemma}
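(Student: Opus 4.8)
The plan is to treat the two bounds separately and to read off the first one as a trivial consequence of the second line of the problem's definition, then to derive the energy bound by a standard parabolic energy estimate applied to $\Piti$ itself. For the initial estimate, I would take $v = \Piti(\cdot,0)$ in the initial equation $\mathcal{B}(\Piti(\cdot,0),v)=\langle t^0_i,v\rangle$, obtaining $\|\Piti(\cdot,0)\|_\mathcal{B}^2 = \langle t^0_i,\Piti(\cdot,0)\rangle \le \|t^0_i\|\,\|\Piti(\cdot,0)\|_\mathcal{B}$, hence $\|\Piti(\cdot,0)\|_\mathcal{B}\le\|t^0_i\|$. Then I would invoke Lemma \ref{lemma:overline_C}, specifically $\|v\|_{L^2}\le Hc_\#\|v\|_\mathcal{B}$, to get $\|\Piti(\cdot,0)\|_{L^2}\le Hc_\#\|\Piti(\cdot,0)\|_\mathcal{B}\le Hc_\#\|t^0_i\|$, and squaring gives the claim.

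For the energy bound, I would test the evolution equation $(\Piti_t,v)+\mathcal{B}(\Piti,v)=\langle t_i,v\rangle$ with $v=\Piti$. The first term becomes $\tfrac12\frac{d}{dt}\|\Piti\|_{L^2}^2$, the second is exactly $\|\Piti\|_\mathcal{B}^2$ (since $\mathcal{B}(w,w)=\mathcal{A}(w,w)+\|\pi w\|_s^2 = \|w\|_\mathcal{B}^2$ by the coercivity established for $\mathcal{A}$ on $V$), and the right side is $\langle t_i,\Piti\rangle\le \|t_i\|\,\|\Piti\|_\mathcal{B}\le \tfrac12\|t_i\|^2 + \tfrac12\|\Piti\|_\mathcal{B}^2$ by Young's inequality. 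This yields
\[
\frac{d}{dt}\|\Piti\|_{L^2}^2 + \|\Piti\|_\mathcal{B}^2 \le \|t_i\|^2.
\]
Integrating from $0$ to $T$ gives $\|\Piti(\cdot,T)\|_{L^2}^2 + \int_0^T\|\Piti\|_\mathcal{B}^2 \le \|\Piti(\cdot,0)\|_{L^2}^2 + \int_0^T\|t_i\|^2$. Recognizing the left side as $\|\Piti\|_\mathcal{E}^2$ and bounding $\|\Piti(\cdot,0)\|_{L^2}^2 \le H^2c_\#^2\|t^0_i\|^2$ by the first part completes the argument.

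The only genuinely delicate point is justifying that $\tfrac{d}{dt}\|\Piti\|_{L^2}^2 = 2(\Piti_t,\Piti)$ and that the integration-by-parts in time is legitimate — i.e. that $\Piti$ has enough regularity in $t$ (say $\Piti\in L^2(0,T;V)$ with $\Piti_t\in L^2(0,T;V')$) for the energy identity to hold and the pointwise-in-$t$ $L^2$ trace at $t=T$ to make sense. This is the usual parabolic well-posedness machinery (Lions–Magenes / Galerkin approximation), and since $\mathcal{B}$ is coercive and bounded on $V$ by Lemma \ref{lemma:overline_C}, standard theory applies; I would simply cite it rather than reprove it. Everything else is a one-line application of Cauchy–Schwarz, Young, and the already-established norm inequalities, so I do not expect any obstacle there.
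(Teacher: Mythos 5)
Your proposal is correct and follows essentially the same route as the paper: the initial bound comes from testing $\mathcal{B}(\Piti(\cdot,0),v)=\langle t^0_i,v\rangle$ with $v=\Piti(\cdot,0)$ and invoking $\|v\|_{L^2}\le Hc_\#\|v\|_\mathcal{B}$, and the energy bound comes from testing the evolution equation with $v=\Piti$, applying Young's inequality to $\langle t_i,\Piti\rangle$, and integrating in time. The paper does not belabor the time-regularity point you flag, so citing standard parabolic theory there is entirely in keeping with its level of rigor.
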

\begin{proof}
    
\begin{align*}
    \|\Piti(\cdot,0)\|_{L^2}^2&\leq H^2c_\#^2 \|\Piti(\cdot,0)\|_\mathcal{B}^2 \\
    &= H^2c_\#^2\left<t^0_i, \Piti(\cdot,0)\right>\\
    &\leq H^2c_\#^2 \|t^0_i\|\|\Piti(\cdot,0)\|_\mathcal{B}\\
    &\leq H^2 c_\#^2 \|t^0_i\|^2.
\end{align*}
Hence, 
\begin{equation}
    \|\Piti(\cdot,0)\|_{L^2}\leq Hc_\#\|t^0_i\|.
    \label{eq: piti(cdot,0)}
\end{equation}
Also, putting $v = \Piti$,
\begin{align*}
    (\Piti_t,\Piti)+\mathcal{B}(\Piti,\Piti)&=\left<t_i,\Piti\right>\\
    \int^T_0\frac{1}{2}\frac{\partial}{\partial t}\|\Piti_t\|_{L^2}^2
    +\|\Piti\|_\mathcal{B}^2 \di t &= \int^T_0 \left<t_i,\Piti\right>\\
    \frac{1}{2}\|\Piti(\cdot,T)\|_{L^2}^2 +\int^T_0 \|\Piti\|_\mathcal{B}^2&\leq \frac{1}{2}\|\Piti(\cdot,0)\|^2 + \frac{1}{2}\int^T_0 \|t_i\|^2 + \frac{1}{2}\int^T_0 \|\Piti\|_\mathcal{B}^2.\\
\end{align*}
Hence, by using equation \eqref{eq: piti(cdot,0)},
\begin{equation}
\begin{aligned}
    \|\Piti\|_\mathcal{E}^2\leq  H^2 c_\#^2 \|t^0_i\|^2 + \int^T_0 \|t_i\|^2.
\end{aligned}
    \label{eqn:E(Piti)}
\end{equation}
\end{proof}
\begin{lemma}
With the same notations in lemma \ref{lemma:part1},
\begin{equation}
        \|\Piti\|_{\mathcal{E}(\Omega \backslash K^m_i)}^2\leq\theta^m \left((m+1)H^2c_\#^2\|t^0_i\|^2+\int^T_0 \|t_i\|^2\right)
        \label{eq: Piti_E}.
    \end{equation}
    \label{lemma:part1_t}
\end{lemma}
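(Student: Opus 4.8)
The plan is to mirror the proof of Lemma~\ref{lemma:part1}, carrying the parabolic term through as a time-boundary contribution. Fix $m\geq 1$ and test the first equation of \eqref{eq:piti_t} with $v=(1-\chimi)\Piti$. Since the cutoff $\chimi\in V^H$ equals $1$ on $K^{m-1}_i\supseteq K_i$, this $v$ is supported in $\Omega\backslash K_i$, so $\langle t_i,v\rangle=0$ and hence $(\Piti_t,(1-\chimi)\Piti)+\mathcal{B}(\Piti,(1-\chimi)\Piti)=0$. As $\chimi$ is independent of $t$, the first term is $\frac{\di}{\di t}\bigl(\tfrac12\int_\Omega(1-\chimi)\Piti^2\bigr)$. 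The decomposition of $\mathcal{B}(\Piti,(1-\chimi)\Piti)$ into the five integrals $I_1,\dots,I_5$ localized to the annulus $\Kmipart$ is an algebraic identity, independent of the right-hand side, so exactly the estimates of Lemma~\ref{lemma:part1} ($I_1,I_4\leq 0$ from the sign conditions on $\boldsymbol A$ and on $b-\bbeta\cdot\bnu$, and Cauchy--Schwarz together with \eqref{lemma:eigen} for $I_2,I_3,I_5$) give the pointwise-in-$t$ inequality
\[
\frac{\di}{\di t}\Bigl(\tfrac12\!\int_\Omega(1-\chimi)\Piti^2\Bigr)+\|\Piti\|_{\mathcal{B}(\Kmi)}^2\leq c_*\,\|\Piti\|_{\mathcal{B}(\Kmipart)}^2 .
\]

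Next I would integrate over $[0,T]$. Since $1-\chimi=1$ on $\Kmi$ and $1-\chimi\leq 1$ with support in $\Omega\backslash K^{m-1}_i$, the time-boundary terms obey $\int_\Omega(1-\chimi)\Piti(\cdot,T)^2\geq\|\Piti(\cdot,T)\|_{L^2(\Kmi)}^2$ and $\int_\Omega(1-\chimi)\Piti(\cdot,0)^2\leq\|\Piti(\cdot,0)\|_{L^2(\Omega\backslash K^{m-1}_i)}^2$. The crucial observation for the initial term is that the second line of \eqref{eq:piti_t}, $\mathcal{B}(\Piti(\cdot,0),v)=\langle t^0_i,v\rangle$, is precisely the time-independent abstract problem of Section~\ref{chapter:1}; hence Lemma~\ref{lemma:part1} with datum $t^0_i$ gives $\|\Piti(\cdot,0)\|_{\mathcal{B}(\Omega\backslash K^{m-1}_i)}^2\leq\theta^{m-1}\|t^0_i\|^2$, and the localized form of the first estimate in Lemma~\ref{lemma:overline_C} upgrades it to $\|\Piti(\cdot,0)\|_{L^2(\Omega\backslash K^{m-1}_i)}^2\leq H^2c_\#^2\,\theta^{m-1}\|t^0_i\|^2$.

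Writing $S_j:=\int_0^T\|\Piti\|_{\mathcal{B}(\Omega\backslash K^j_i)}^2\,\di t$, the annular integral is $S_{m-1}-S_m$, so the integrated inequality becomes $\tfrac12\|\Piti(\cdot,T)\|_{L^2(\Kmi)}^2+(1+c_*)S_m\leq c_*S_{m-1}+\tfrac12 H^2c_\#^2\theta^{m-1}\|t^0_i\|^2$. Dropping the nonnegative $t=T$ term and using $\theta=c_*/(c_*+1)$ yields $S_m\leq\theta S_{m-1}+\tfrac{H^2c_\#^2}{2c_*}\theta^{m}\|t^0_i\|^2$; iterating down to $S_0$, the telescoping sum $\sum_{j=1}^{m}\theta^{m-j}\theta^{j}=m\theta^m$ supplies the linear-in-$m$ factor, while the base case $S_0\leq\|\Piti\|_\mathcal{E}^2\leq H^2c_\#^2\|t^0_i\|^2+\int_0^T\|t_i\|^2$ is Lemma~\ref{lemma:bound_operator}. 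Finally the $t=T$ contribution on $\Kmi$ is recovered by returning to the integrated inequality, $\tfrac12\|\Piti(\cdot,T)\|_{L^2(\Kmi)}^2\leq c_*S_{m-1}+\tfrac12 H^2c_\#^2\theta^{m-1}\|t^0_i\|^2$, and substituting the bound just found for $S_{m-1}$. Summing the two pieces and collecting constants, using $c_*\geq 1$ (so $\theta\geq\tfrac12$, $\theta^{m-1}\leq 2\theta^m$, and $1+m/c_*\leq m+1$), gives $\|\Piti\|_{\mathcal{E}(\Kmi)}^2\leq\theta^m\bigl((m+1)H^2c_\#^2\|t^0_i\|^2+\int_0^T\|t_i\|^2\bigr)$.

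\textbf{Main obstacle.} The delicate point is the recursion: a direct estimate on $\|\Piti\|_{\mathcal{E}(\Kmi)}^2$ is \emph{not} contractive, because the annular term appears with the coefficient $c_*\geq 1$ in front of $S_{m-1}$; one must first run the recursion on the time-integrated energy $S_m$ alone — where the favourable factor $1+c_*$ sits on the left — and only afterwards reconstruct the $t=T$ term. One must also track carefully how the initial-data term reappears on each of the $m$ successive annular shells, since its accumulation being only linear in $m$ is exactly what the $(m+1)$ prefactor records.
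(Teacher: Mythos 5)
Your proof takes essentially the same route as the paper's: test \eqref{eq:piti_t} with $(1-\chimi)\Piti$, reuse the $I_1,\dots,I_5$ estimates of Lemma \ref{lemma:part1} to get the annulus bound with constant $c_*$, observe that the initial condition in \eqref{eq:piti_t} is exactly the time-independent abstract problem so that Lemma \ref{lemma:part1} (localized via Lemma \ref{lemma:overline_C}) controls $\|\Piti(\cdot,0)\|_{L^2(\Omega\backslash K^{m-1}_i)}$, and iterate, with the initial-data shells accumulating linearly in $m$. The one substantive divergence is bookkeeping: you run the contraction on $S_m=\int_0^T\|\Piti\|^2_{\mathcal{B}(\Omega\backslash K^m_i)}$ alone and afterwards recover $\|\Piti(\cdot,T)\|^2_{L^2(\Kmi)}$ from $c_*S_{m-1}$, which reintroduces a factor of order $c_*$ (since $c_*\theta^{m-1}=(1+c_*)\theta^m$) in front of $\int_0^T\|t_i\|^2$ that the displayed bound does not carry; the paper instead iterates directly on $\|\Piti\|^2_{\mathcal{E}(\Omega\backslash K^j_i)}-\tfrac12\|\Piti(\cdot,0)\|^2_{L^2(\Omega\backslash K^j_i)}$, keeping the terminal-time term inside the contracting quantity. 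Your argument thus establishes the lemma up to a $c_*$-dependent multiplicative constant --- harmless for the downstream $O(\theta^{m-1})$ conclusions, but not literally the stated coefficient $1$ on $\int_0^T\|t_i\|^2$.
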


\begin{proof}
Following the similar approach in the proof of lemma \ref{lemma:part1}, putting $v=(1-\chimi)\Piti$ into the equation \eqref{eq:piti_t} and then integrating over $t \in [0,T]$, we can obtain
\begin{align*}
    &\int^T_0(\Piti_t, \Piti)_{(\Omega\backslash K^m_i)} + \|\Piti\|_{\mathcal{B}{(\Omega\backslash K^m_i)}}^2\\
    &= \int^T_0(\Piti_t, (\chimi-1)\Piti)_{(K^m_i \backslash K^{m-1}_i)}
    +\int^T_0 \mathcal{B}_{(K^m_i \backslash K^{m-1}_i)}(\Piti, (\chimi-1)\Piti)\\
    &\leq \int_{K^m_i \backslash K^{m-1}_i} (\chimi-1) \int^T_0 \frac{1}{2}\frac{\partial}{\partial t}\|\Piti\|_{L^2(K^m_i \backslash K^{m-1}_i)}^2 + 
    \int^T_0 c_* \|\Piti\|_{\mathcal{B}(K^m_i \backslash K^{m-1}_i)}^2\\
    &\leq \frac{1}{2}\|\Piti(\cdot,0)\|_{L^2(K^m_i \backslash K^{m-1}_i)}^2+ 
    \int^T_0 c_* \|\Piti\|_{\mathcal{B}(K^m_i \backslash K^{m-1}_i)}^2.\\
\end{align*}
Then,
\begin{align*}
   &\|\Piti\|_{\mathcal{E}{(\Omega\backslash K^{m-1}_i)}}^2-\frac{1}{2}\|\Piti(\cdot,0)\|_{L^2(\Omega\backslash K^{m-1}_i)}^2\\
   &\geq \left(1 + \frac{1}{c_*}\right)\left(\|\Piti\|^2_{\mathcal{E}(\Omega\backslash K^m_i)}-\frac{1}{2}\|\Piti(\cdot,0)\|_{L^2(\Omega \backslash K^{m}_i)}^2\right)
   -\frac{1}{2c_*}\|\Piti(\cdot,0)\|_{L^2(K^m_i \backslash K^{m-1}_i)}^2\\
   &\geq \frac{1}{\theta}\left(\|\Piti\|^2_{\mathcal{E}(\Omega\backslash K^m_i)}-\frac{1}{2}\|\Piti(\cdot,0)\|_{L^2(\Omega \backslash K^{m}_i)}^2\right)-\frac{1}{2c_*}\|\Piti(\cdot,0)\|_{L^2(K^m_i \backslash K^{m-1}_i)}^2.\\
\end{align*}
Along with lemma \ref{lemma:part1}, this grants us 
    \begin{align*}
        \|\Piti\|_{\mathcal{E}(\Omega\backslash K^m_i)}^2
        &\leq \frac{1}{2}\|\Piti(\cdot,0)\|_{L^2(\Omega\backslash K^{m}_i)}^2 + \frac{\theta}{2c_*}\|\Piti(\cdot,0)\|_{L^2(K^m_i \backslash K^{m-1}_i)}^2
        +\theta\|\Piti\|_{\mathcal{E}(\Omega\backslash K^{m-1}_i)}^2\\
        &\leq \frac{1}{2}\theta^m\|\Piti(\cdot,0)\|_{L^2}^2 + \frac{\theta}{2c_*}\|\Piti(\cdot,0)\|_{L^2(\Omega\backslash K^{m-1}_i)}^2
        +\theta\|\Piti\|_{\mathcal{E}(\Omega\backslash K^{m-1}_i)}^2\\
        &\leq \frac{1}{2}\theta^m\left(1+\frac{1}{c_*}\right)\|\Piti(\cdot,0)\|_{L^2}^2 +\theta \|\Piti\|_{\mathcal{E}(\Omega\backslash K^{m-1}_i)}^2\\
        &\leq \frac{1}{2}\theta^{m-1}\|\Piti(\cdot,0)\|_{L^2}^2 +\theta \|\Piti\|_{\mathcal{E}(\Omega\backslash K^{m-1}_i)}^2\\
        &\leq \theta^m\left(m \|\Piti(\cdot,0)\|_{L^2}^2 + \|\Piti\|_\mathcal{E}^2\right)\\
        &\leq \theta^m\left(m H^2 c_\#^2\|\Piti(\cdot,0)\|_\mathcal{B}^2 + \|\Piti\|_\mathcal{E}^2\right)\\
        &\leq \theta^m\left(m H^2 c_\#^2\|t_i^0\|^2 + \|\Piti\|_\mathcal{E}^2\right).
    \end{align*}
Note that combined with equation \eqref{eqn:E(Piti)}, we obtain
    \[\|\Piti\|^2_{\mathcal{E}(\Omega\backslash K^m_i)}\leq  \theta^m \left((m+1)H^2c_\#^2\|t^0_i\|^2+\int^T_0 \|t_i\|^2\right).\]
\end{proof}

In parallel, we can have
\begin{equation}
    \|\Pmiti\|^2_{\mathcal{E}^2(\Omega\backslash K^m_i)}\leq  \theta^m \left((m+1)H^2c_\#^2\|t^0_i\|^2+\int^T_0 \|t_i\|^2\right).
    \label{eq: Pmiti_E}
\end{equation}

\begin{lemma}
    With the same notations in lemma \ref{lemma:part2}, we have
    \begin{equation}    
    \begin{aligned}
    \|\Piti-\Pmiti\|_\mathcal{E}^2\leq 2\theta^{m-1}(1+2\overline{C}^2 c_\star) \left[H^2 c_\#^2 (m+2)\|t_i^0\|^2+\int^T_0\|t_i\|^2\right].
    \end{aligned}
    \end{equation}
    \label{lemma:part2_t}
\end{lemma}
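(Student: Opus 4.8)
The plan is to transplant the proof of Lemma~\ref{lemma:part2} into the parabolic setting, replacing its purely elliptic energy identity by a space--time one and absorbing the one extra term the time derivative creates. Put $z_i\coloneqq\Piti-\Pmiti$ and use the same splitting $z_i=z_i'+z_i''$ with $z_i'=(1-\chimi)\Piti$ (supported in $\Omega\backslash K^{m-1}_i$) and $z_i''=(\chimi-1)\Pmiti+\chimi z_i\in V^m_i$. Subtracting the two systems defining $\Piti$ and $\Pmiti$ and testing with $v=z_i''(\cdot,t)\in V^m_i$ gives, for a.e.\ $t$, the orthogonality $(z_{i,t},z_i'')+\mathcal{B}(z_i,z_i'')=0$ together with $\mathcal{B}(z_i(\cdot,0),z_i''(\cdot,0))=0$. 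Since $z_i(\cdot,0)$ solves exactly the time-independent abstract problem with data $t_i^0$, Lemma~\ref{lemma:part2} (with $t_i$ replaced by $t_i^0$) and the bound $\|v\|_{L^2}\leq Hc_\#\|v\|_\mathcal{B}$ of Lemma~\ref{lemma:overline_C} yield $\|z_i(\cdot,0)\|_{L^2}^2\leq H^2c_\#^2\overline{C}^2 c_\star\theta^{m-1}\|t_i^0\|^2$; this is what is meant by the remark that the initial condition is the earlier abstract problem.

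From the orthogonality, $(z_{i,t},z_i)+\mathcal{B}(z_i,z_i)=(z_{i,t},z_i')+\mathcal{B}(z_i,z_i')$, so integrating over $[0,T]$ gives
\[\frac{1}{2}\|z_i(\cdot,T)\|_{L^2}^2+\int_0^T\|z_i\|_\mathcal{B}^2=\frac{1}{2}\|z_i(\cdot,0)\|_{L^2}^2+\int_0^T\big[(z_{i,t},z_i')+\mathcal{B}(z_i,z_i')\big].\]
For $\int_0^T\mathcal{B}(z_i,z_i')$ I would use Cauchy--Schwarz and Young, moving a small multiple of $\int_0^T\|z_i\|_\mathcal{B}^2$ to the left and bounding the rest by $\overline{C}^2\int_0^T\|z_i'\|_\mathcal{B}^2$, where the cutoff estimates of Lemma~\ref{lemma:part2} give $\|z_i'\|_\mathcal{B}^2\leq c_\star\|\Piti\|_{\mathcal{B}(\Omega\backslash K^{m-1}_i)}^2$. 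For the genuinely new term $\int_0^T(z_{i,t},z_i')$, integrate by parts in time and rewrite $(z_i,z_{i,t}')=((1-\chimi)z_i,\Piti_t)$; since $supp((1-\chimi)z_i)\subset\Omega\backslash K^{m-1}_i\subset\Omega\backslash K_i$, the equation for $\Piti$ forces $\langle t_i,(1-\chimi)z_i\rangle=0$, hence $((1-\chimi)z_i,\Piti_t)=-\mathcal{B}(\Piti,(1-\chimi)z_i)$. This turns the cross term into $\left.(z_i,z_i')\right|^T_0+\int_0^T\mathcal{B}(\Piti,(1-\chimi)z_i)$: the endpoint-in-time pieces are handled by $L^2$ Cauchy--Schwarz and Young (absorbing a fraction of $\|z_i(\cdot,T)\|_{L^2}^2$ and using $\|\Piti(\cdot,T)\|_{L^2(\Omega\backslash K^{m-1}_i)}^2\leq\|\Piti\|_{\mathcal{E}(\Omega\backslash K^{m-1}_i)}^2$ and $\|\Piti(\cdot,0)\|_{L^2(\Omega\backslash K^{m-1}_i)}^2\leq H^2c_\#^2\theta^{m-1}\|t_i^0\|^2$ from Lemma~\ref{lemma:part1}), while $\int_0^T\mathcal{B}(\Piti,(1-\chimi)z_i)$, via the localization of $\mathcal{B}$, the bound $\|(1-\chimi)z_i\|_\mathcal{B}\lesssim\|z_i\|_\mathcal{B}$ and Young, contributes one more absorbable multiple of $\int_0^T\|z_i\|_\mathcal{B}^2$ plus a multiple of $\int_0^T\|\Piti\|_{\mathcal{B}(\Omega\backslash K^{m-1}_i)}^2\leq\|\Piti\|_{\mathcal{E}(\Omega\backslash K^{m-1}_i)}^2$.

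Collecting all estimates and absorbing every $\|z_i(\cdot,T)\|_{L^2}^2$ and $\int_0^T\|z_i\|_\mathcal{B}^2$ contribution on the left leaves $\|z_i\|_\mathcal{E}^2\lesssim\|z_i(\cdot,0)\|_{L^2}^2+\overline{C}^2 c_\star\|\Piti\|_{\mathcal{E}(\Omega\backslash K^{m-1}_i)}^2$ (with a possible analogous $\Pmiti$-tail, controlled by \eqref{eq: Pmiti_E}). Inserting the initial-data bound above and \eqref{eq: Piti_E} at level $m-1$, i.e.\ $\|\Piti\|_{\mathcal{E}(\Omega\backslash K^{m-1}_i)}^2\leq\theta^{m-1}\big(mH^2c_\#^2\|t_i^0\|^2+\int_0^T\|t_i\|^2\big)$, and relabelling absolute constants into the factor $2(1+2\overline{C}^2 c_\star)$ and the polynomial factor into $(m+2)$ yields the claim. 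The main obstacle is exactly the time-derivative cross term $\int_0^T(z_{i,t},z_i')$: one must integrate by parts in time, invoke the $\Piti$-equation and the support property of $t_i$ to eliminate $\Piti_t$, and then choose Young's constants carefully so that all $z_i$-norms land on the left while only exponentially decaying $\Piti$-tails and the already-controlled initial data remain on the right; the final factor-of-two and polynomial-in-$m$ bookkeeping is tedious but routine.
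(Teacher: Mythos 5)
Your proposal is correct and follows essentially the same route as the paper's proof: the identical splitting $z_i=z_i'+z_i''$, the orthogonality $(z_{i,t},z_i'')+\mathcal{B}(z_i,z_i'')=0$, integration by parts in time followed by the key observation that $((z_i')_t,z_i)=(\Piti_t,(1-\chimi)z_i)=-\mathcal{B}(\Piti,(1-\chimi)z_i)$ via the support property of $(1-\chimi)z_i$ and the defining equation for $\Piti$, and then Cauchy--Schwarz/Young absorption against the tail bounds of Lemmas \ref{lemma:part1}, \ref{lemma:part2}, \ref{lemma:bound_operator} and estimates \eqref{eq: Piti_E}--\eqref{eq: Pmiti_E}. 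No substantive differences from the paper's argument.
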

\begin{proof}
    Recall that $z_i = \Piti-\Pmiti$ and
    $$z_i = \left[(1-\chimi)\Piti\right]+\left[(\chimi-1)\Pmiti+\chimi z_i\right] =: z_i^\prime + z_i^{\prime\prime}.$$
    With equations \eqref{eq: Pmiti_E} and \eqref{eq: Piti_E},
    \[
    \begin{aligned}
    \|z_i(\cdot,T)\|_{\mathcal{E}(\Omega\backslash K^{m-1}_i)}^2\leq  \theta^{m-1}\left(mH^2c_\#^2\|t^0_i\|^2+\int^T_0 \|t_i\|^2\right).
    \end{aligned}
    \]    
    and by lemmas \ref{lemma:bound_operator} and \ref{lemma:part2},
    \begin{align*}
    \|z_i(\cdot,0)\|^2_{L^2(\Omega\backslash K^{m-1}_i)}
    &\leq \overline{C}^2 c_\star H^2c_\#^2\theta^{m-1} \|\Piti(\cdot,0)\|_\mathcal{B}^2\\
    &\leq \overline{C}^2 c_\star H^2c_\#^2\theta^{m-1} \|t_i^0\|^2,
    \end{align*}
    and by lemma \ref{lemma:part1},
    \begin{align*}
        \|\Piti(\cdot,0)\|_{L^2(\Omega\backslash K^{m-1})}^2
        \leq\overline{C}^2 c_\star H^2c_\#^2\theta^{m-1}\|t_i^0\|^2.
    \end{align*}
    Then,
    \begin{align*}
        &\qquad \|z_i\|^2_\mathcal{E}-\frac{1}{2}\|z_i(\cdot,0)\|^2\\
        &\leq \int^T_0((z_i)_t,z_i^\prime)+\int^T_0\mathcal{B}(z_i,z_i^\prime)\\
        &\leq \left.(z_i,z_i^\prime)\right|^T_0 - \int^T_0 ((z^\prime_i)_t,z_i) + \overline{C}\int^T_0 \|z_i\|_{\mathcal{B}{(\Omega\backslash K^{m-1}_i)}}
        \|z_i^\prime\|_{\mathcal{B}{(\Omega\backslash K^{m-1}_i)}}\\
        &\leq \|z_i(\cdot,T)\|_{L^2(\Omega\backslash K^{m-1}_i)}\|z^\prime_i(\cdot,T)\|_{L^2(\Omega\backslash K^{m-1}_i)}
        + \|z_i(\cdot,0)\|_{L^2(\Omega\backslash K^{m-1}_i)}\|z^\prime_i(\cdot,0)\|_{L^2(\Omega\backslash K^{m-1}_i)}\\
        &\qquad +\int^T_0 \mathcal{B}_{(\Omega\backslash K^{m-1}_i)}(\Piti,(1-\chimi)z_i) + \overline{C}\int^T_0\|z_i\|_{\mathcal{B}{(\Omega\backslash K^{m-1}_i)}}
        \|z_i^\prime\|_{\mathcal{B}{(\Omega\backslash K^{m-1}_i)}}\\
        &\leq \theta^{m-1}\left(mH^2c_\#^2\|t^0_i\|^2+\int^T_0\|t_i\|^2\right)+\left(\theta^{m-1}H^2c_\#^2\|t^0_i\|^2\right)\\
        &\qquad + \overline{C}\|\Piti\|_{\mathcal{E}(\Omega\backslash K^{m-1}_i)}\left(\int^T_0\left\|(1-\chimi)z_i\right\|^2_{\mathcal{B}(\Omega \backslash K^{m-1}_i)}\right)^{1/2}\\
        &\qquad +\overline{C}\sqrt{c_\star}\|z_i\|_{\mathcal{E}(\Omega\backslash K^{m-1}_i)}\left(\int^T_0\|\Piti\|_{\mathcal{B}(\Omega\backslash K^{m-1}_i)}^2\right)^{1/2}\\
        &\leq \theta^{m-1}\left((m+1)H^2 c_\#^2 \|t^0_i\|^2+\int^T_0 \|t_i\|^2\right)
         + 2\overline{C}\sqrt{c_\star}\|\Piti\|_{\mathcal{E}(\Omega\backslash K^{m-1}_i)}\|z_i\|_{\mathcal{E}(\Omega\backslash K^{m-1}_i)}\\
        &\leq \theta^{m-1}\left((m+1)H^2c_\#^2\|t^0_i\|^2+\int^T_0\|t_i\|^2\right)+2\overline{C}^2c_\star \|\Piti\|_{\mathcal{E}(\Omega \backslash K^{m-1}_i)}^2 + \frac{1}{2}\|z_i\|_{\mathcal{E}}^2.
    \end{align*}
    Hence,
    \begin{align*}
        \|z_i\|_\mathcal{E}^2\leq 2\theta^{m-1}(1+2\overline{C}^2 c_\star) \left[H^2 c_\#^2 (m+2)\|t_i^0\|^2+\int^T_0\|t_i\|^2\right].
    \end{align*}
\end{proof}

\begin{lemma}
    With the same notations in lemma \ref{lemma:part3}, we have
    \begin{equation}
    \begin{aligned}
        \left\|\sum^N_{i=1}\Piti-\Pmiti\right\|_\mathcal{E}^2
        \leq \theta^{m-1}\overline{C}(2+\overline{C}^2 c_\star) \left[H^2 c_\#^2 (m+2)\sum^N_{i=1}\|t_i^0\|^2+\int^T_0\sum^N_{i=1}\|t_i\|^2\right].
    \end{aligned}
    \end{equation}
    \label{lemma:part3_t}
\end{lemma}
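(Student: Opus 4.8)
The plan is to run the argument of lemma~\ref{lemma:part3} in the space--time energy norm $\|\cdot\|_{\mathcal{E}}$, borrowing the per-cell bound from lemma~\ref{lemma:part2_t} and treating separately the initial data (through lemma~\ref{lemma:part3} applied at $t=0$) and the extra time-derivative pairing. Set $z_i\coloneqq\Piti-\Pmiti$ and $z\coloneqq\sum_{i=1}^N z_i$, and for each $i$ split the test function with the cut-off $\chi^{m,m+1}_i$,
\[
z=\bigl\{(1-\chi^{m,m+1}_i)z\bigr\}+\bigl\{\chi^{m,m+1}_i z\bigr\}\eqqcolon z'+z'',
\qquad \mathrm{supp}(z')\subset\Omega\backslash K^m_i,\quad \mathrm{supp}(z'')\subset\mathrm{cl}(K^{m+1}_i).
\]
Since $\mathrm{supp}(z')$ is disjoint from $K_i$, so that $\langle t_i,z'\rangle=\langle t^0_i,z'\rangle=0$, and from $\mathrm{cl}(K^m_i)\supset\mathrm{supp}(\Pmiti)$, testing the defining equations of $\Piti$ and $\Pmiti$ against $z'$ and subtracting gives, pointwise in $t$, $((z_i)_t,z')+\mathcal{B}(z_i,z')=0$ as well as $\mathcal{B}(z_i(\cdot,0),z'(\cdot,0))=0$; moreover, exactly as in lemma~\ref{lemma:part2}, $\|z''\|_{\mathcal{B}}=\|\chi^{m,m+1}_i z\|_{\mathcal{B}}\le c_\star\|z\|_{\mathcal{B}(K^{m+1}_i)}$.

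Using this orthogonality, $((z_i)_t,z)+\mathcal{B}(z_i,z)=((z_i)_t,z'')+\mathcal{B}(z_i,z'')$ for each $i$; summing over $i$ — the left sides add up to $(z_t,z)+\mathcal{B}(z,z)$ — and integrating over $[0,T]$ gives
\[
\tfrac12\|z(\cdot,T)\|_{L^2}^2+\int_0^T\|z\|_{\mathcal{B}}^2
=\tfrac12\|z(\cdot,0)\|_{L^2}^2+\int_0^T\sum_{i=1}^N\bigl[((z_i)_t,z'')+\mathcal{B}(z_i,z'')\bigr],
\]
whose left-hand side dominates $\tfrac12\|z\|_{\mathcal{E}}^2$, so it remains to bound the right-hand side. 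The initial term $z(\cdot,0)=\sum_i(\Piti(\cdot,0)-\Pmiti(\cdot,0))$ solves precisely the elliptic abstract problem with functionals $t^0_i$, so lemma~\ref{lemma:part3} bounds $\|z(\cdot,0)\|_{\mathcal{B}}^2$ by the $\theta^{m-1}$-decaying prefactor times $\sum_i\langle t^0_i,\Piti(\cdot,0)\rangle\le\sum_i\|t^0_i\|^2$, and $\|z(\cdot,0)\|_{L^2}\le Hc_\#\|z(\cdot,0)\|_{\mathcal{B}}$ from lemma~\ref{lemma:overline_C} then produces the $H^2c_\#^2$ factor. For $\int_0^T\sum_i\mathcal{B}(z_i,z'')$ I would follow the proof of lemma~\ref{lemma:part3}: $\mathcal{B}(z_i,z'')\le\overline{C}c_\star\|z_i\|_{\mathcal{B}}\|z\|_{\mathcal{B}(K^{m+1}_i)}$, then Cauchy--Schwarz in $i$ with the finite-overlap Assumption $\sum_i\|z\|_{\mathcal{B}(K^{m+1}_i)}^2\le C_\mathup{ol}(m+1)^d\|z\|_{\mathcal{B}}^2$, Cauchy--Schwarz in $t$, the bounds $\int_0^T\|z\|_{\mathcal{B}}^2\le\|z\|_{\mathcal{E}}^2$ and $\int_0^T\sum_i\|z_i\|_{\mathcal{B}}^2\le\sum_i\|z_i\|_{\mathcal{E}}^2$, lemma~\ref{lemma:part2_t} for the last sum, and a final Young's inequality absorbing the surviving $\|z\|_{\mathcal{E}}$ into the left-hand side.

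The delicate piece, which I expect to be the main obstacle, is the time-derivative pairing $\int_0^T\sum_i((z_i)_t,\chi^{m,m+1}_i z)$, unreachable by the bilinear-form bounds alone since we control $z_t$ in no norm. My plan is to integrate by parts in $t$, $\int_0^T\sum_i((z_i)_t,\chi^{m,m+1}_i z)=\sum_i[(\chi^{m,m+1}_i z_i,z)]_0^T-\int_0^T\sum_i(\chi^{m,m+1}_i z_i,z_t)$: the endpoint contributions are handled by an $L^2$ Cauchy--Schwarz, the finite-overlap Assumption, $\|v\|_{L^2}\le Hc_\#\|v\|_{\mathcal{B}}$, and the decay estimates of lemma~\ref{lemma:part2_t} (and lemma~\ref{lemma:part3} at $t=0$), while in the residual integral one rewrites $\sum_i\chi^{m,m+1}_i z_i=z-\sum_i(1-\chi^{m,m+1}_i)\Piti$ (using that $\Pmiti$ vanishes off $K^m_i$), so that the $z$-part contributes $\tfrac12\|z\|_{L^2}^2\big|_0^T$ while the remainder, after one more integration by parts, is recast by testing each $\Piti$-equation against $(1-\chi^{m,m+1}_i)z$ — legitimate since that test function is supported away from $K_i$ — into terms $\mathcal{B}_{(\Omega\backslash K^m_i)}(\Piti,(1-\chi^{m,m+1}_i)z)$ that decay by lemma~\ref{lemma:part1_t}. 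A concluding Young's inequality removes the residual $\|z\|_{\mathcal{E}}$ contributions, and collecting the $\theta^{m-1}$ factors, the overlap count, and the constants $\overline{C},c_\star,c_\#$ from the three pieces — then substituting lemma~\ref{lemma:part2_t} — yields the stated bound.
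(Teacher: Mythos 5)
Your proposal is workable in outline, but it takes a genuinely different --- and much heavier --- route than the paper. The paper never forms a global variational identity for $z=\sum_i z_i$ at all: it simply takes the per-cell estimate of lemma \ref{lemma:part2_t} and sums it, bounding $\|z(\cdot,T)\|_{L^2}^2$ by $\bigl(\sum_i\|z_i(\cdot,T)\|_{L^2}\bigr)^2$ and $\int_0^T\|z\|_\mathcal{B}^2=\int_0^T\sum_{i,j}\mathcal{B}(z_i,z_j)$ by $\overline{C}\sum_i\int_0^T\|z_i\|_\mathcal{B}^2$, then inserting lemma \ref{lemma:part2_t}. That is a two-step argument with no cutoff decomposition, no orthogonality, and no time-derivative pairing to control. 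Your plan instead replays the full machinery of lemma \ref{lemma:part3} in the parabolic setting: the splitting $z=(1-\chimmi)z+\chimmi z$, the combined orthogonality $((z_i)_t,z')+\mathcal{B}(z_i,z')=0$ (which is the correct time-dependent replacement for $\mathcal{B}(z_i,z')=0$), the finite-overlap count, and the integration-by-parts treatment of $\int_0^T\sum_i((z_i)_t,\chimmi z)$ with the rewriting $\sum_i\chimmi z_i=z-\sum_i(1-\chimmi)\Piti$ and a final appeal to lemma \ref{lemma:part1_t}. Each of these steps is individually justifiable, so the route can be pushed through; what it buys is rigor --- the paper's collapse of the double sum $\sum_{i,j}\|z_i\|_\mathcal{B}\|z_j\|_\mathcal{B}$ into $\sum_i\|z_i\|_\mathcal{B}^2$ is not a valid inequality on its face and really needs exactly the locality-plus-overlap argument you supply. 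What it costs is the constant: your Cauchy--Schwarz in $i$ against $\sum_i\|z\|_{\mathcal{B}(K_i^{m+1})}^2\le C_\mathup{ol}(m+1)^d\|z\|_\mathcal{B}^2$ (and likewise in the endpoint and initial-data terms via lemma \ref{lemma:part3}) necessarily produces extra factors of $C_\mathup{ol}(m+1)^d$ and higher powers of $\overline{C},c_\star$ that are absent from the stated bound, so you prove the lemma only up to a polynomial-in-$m$ degradation of the prefactor. This is harmless downstream, since the corollaries already assume $\theta^{(m-1)/2}(m+2)^{d/2}=O(H^2)$, but you should flag that the literal constant in the statement is not what your argument delivers.
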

\begin{proof}
    With the same notation, $z = \sum^N_i z_i$.
    First, with lemma \ref{lemma:part2_t},
    \begin{align*}
        &\quad \left\|\sum^N_{i=1}z_i(\cdot,T)\right\|_{L^2}^2\leq \left(\sum^N_{i=1}\|z_i(\cdot,T)\|_{L^2}\right)^2\\
        &\leq \left(\sum^N_{i=1}\theta^{\frac{m-1}{2}}(2+\overline{C}^2 c_\star)^{1/2} \left[H^2 c_\#^2 (m+2)\|t_i^0\|^2+\int^T_0\|t_i\|^2\right]^{1/2}\right)^2\\
        &\leq \theta^{m-1}(2+\overline{C}^2 c_\star)\left(H^2 c_\#^2 (m+2)\sum^N_{i=1}\|t_i^0\|^2+\int^T_0\sum^N_{i=1}\|t_i\|^2\right).
    \end{align*}
    Next, by the Cauchy Schwartz inequality, 
    \begin{align*}
        &\quad \int^T_0 \|z\|_\mathcal{B}^2 =\int^T_0 \sum_{i,j}\mathcal{B}(z_i,z_j)\leq \overline{C}\int^T_0\sum_{i,j}\|z_i\|_\mathcal{B}\|z_j\|_\mathcal{B} \leq \overline{C}{\sum^N_{i=1}\int^T_0\|z_i\|_\mathcal{B}^2}\\
        &\leq \theta^{m-1}\overline{C}\sum^N_{i=1}(2+\overline{C}^2 c_\star) \left[H^2 c_\#^2 (m+2)\|t_i^0\|^2+\int^T_0\|t_i\|^2\right]\\
        &\leq \theta^{m-1}\overline{C}(2+\overline{C}^2 c_\star) \left[H^2 c_\#^2 (m+2)\sum^N_{i=1}\|t_i^0\|^2+\int^T_0\sum^N_{i=1}\|t_i\|^2\right].
    \end{align*}
    Now, together with all the terms, the proof is complete.
\end{proof}

\begin{theorem}
Suppose $\mathcal{D}^m \widetilde{g}$, $\mathcal{N}^m q$, $w^m$, $\mathcal{N}^\mathup{glo} q$, $\mathcal{D}^\mathup{glo} \widetilde{g}$ and $w^\mathup{glo}$ are constructed by equations \eqref{eq:dg_time_variant_ms}, \eqref{eq:nq_time_variant_ms}, \eqref{eqn:ms_t_weakBVP}, \eqref{eq:dg_time_variant}, \eqref{eq:nq_time_variant}, and \eqref{eq:elliptic_projection_t} respectively. Suppose $\overline{C}$, $\theta$, $c_\#$, $c_\star$ and $C_\mathup{inv}$ are defined as in lemmas \ref{lemma:overline_C}, \ref{lemma:part1}, \ref{lemma:part2} and \ref{lemma:Cinv} respectively. Then, 
    \begin{equation}
        \begin{aligned}   
        \|\mathcal{D}^\mathup{glo}\widetilde{g}-\mathcal{D}^m \widetilde{g}\|^2_\mathcal{E}
        \leq \theta^{m-1}\overline{C}^2(2+\overline{C}^2 c_\star) \left[H^2 c_\#^2 (m+2)\|\widetilde{g}(\cdot,0)\|_{L^2}^2+\int^T_0(\|\widetilde{g}\|_\mathcal{A}^2+\|\widetilde{g}_t\|_{L^2}^2)\right];
        \end{aligned}
    \end{equation}
    \begin{equation}
        \begin{aligned}   
        \|\mathcal{N}^\mathup{glo}q-\mathcal{N}^m q\|^2_\mathcal{E}
        \leq \theta^{m-1}C_\mathup{tr}^2(2+\overline{C}^2 c_\star) \left[H^2 c_\#^2 (m+2)\|q(\cdot,0)\|_{L^2(\Gamma_N)}^2+\int^T_0\|q\|_{L^2(\Gamma_N)}^2\right];
        \end{aligned}
    \end{equation}
    and
    \begin{equation}
    \begin{aligned}
        &\|{w}^\mathup{glo}-w^m_*\|^2_\mathcal{E}\\
        &\leq
        \theta^{m-1}\overline{C}(2+\overline{C}^2c_\star)\max\left(C_\mathup{inv}^2,1\right)\left\{H^2c_\#^2\left[(m+2)\|{w}^\mathup{glo}(\cdot,0)\|_{\mathcal{B}}^2+\int^T_0\|{w}^\mathup{glo}_t\|_{L^2}^2\right]+\overline{C}^2\int^T_0\|{w}^\mathup{glo}\|_\mathcal{B}^2\right\}.
    \end{aligned}
    \end{equation}
\end{theorem}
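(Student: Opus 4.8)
The plan is to obtain all three estimates from the abstract bound of Lemma \ref{lemma:part3_t}, by recognizing each corrector difference as $\sum_{i=1}^N(\Piti-\Pmiti)$ for an appropriate family of time-dependent functionals $(t_i,t_i^0)$, and then controlling the dual norms $\|t_i\|$ and $\|t_i^0\|$ with the continuity estimates of Lemma \ref{lemma:overline_C} and the trace constant $C_\mathup{tr}$. The constants that appear are then exactly those already carried by Lemma \ref{lemma:part3_t}.

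For $\mathcal{D}^\mathup{glo}\widetilde g-\mathcal{D}^m\widetilde g$, comparing \eqref{eq:dg_time_variant} with \eqref{eq:dg_time_variant_ms} shows it is precisely $\sum_i(\Piti-\Pmiti)$ with $\langle t_i,v\rangle=(\widetilde g_t,v)_{(K_i)}+\mathcal{A}_{(K_i)}(\widetilde g,v)$ and $\langle t_i^0,v\rangle=\mathcal{A}_{(K_i)}(\widetilde g(\cdot,0),v)$; the locality hypothesis is immediate since each term is an integral over $K_i$ (the boundary term over $\partial K_i\cap\Gamma_N$ vanishes for $v$ supported in $\Omega\backslash K_i$ by the trace). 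Using $\mathcal{A}_{(K_i)}(w,v)\le\overline C\|w\|_{\mathcal{A}(K_i)}\|v\|_\mathcal{B}$ and $\|v\|_{L^2}\le Hc_\#\|v\|_\mathcal{B}$ gives $\|t_i\|\le Hc_\#\|\widetilde g_t\|_{L^2(K_i)}+\overline C\|\widetilde g\|_{\mathcal{A}(K_i)}$ and $\|t_i^0\|\le\overline C\|\widetilde g(\cdot,0)\|_{\mathcal{A}(K_i)}$; squaring, summing over $i$ using the disjointness of the $K_i$, and inserting into Lemma \ref{lemma:part3_t} gives the first bound. The Neumann case is structurally identical with $\langle t_i,v\rangle=\int_{\Gamma_N\cap\partial K_i}qv\,\di\sigma$ and $\langle t_i^0,v\rangle=\int_{\Gamma_N\cap\partial K_i}q(\cdot,0)v\,\di\sigma$; here I would estimate $\langle t_i,v\rangle\le\|q\|_{L^2(\Gamma_N\cap\partial K_i)}\|v\|_{L^2(\Gamma_N)}\le C_\mathup{tr}\|q\|_{L^2(\Gamma_N\cap\partial K_i)}\|v\|_\mathcal{A}\le C_\mathup{tr}\|q\|_{L^2(\Gamma_N\cap\partial K_i)}\|v\|_\mathcal{B}$, so that $\|t_i\|\le C_\mathup{tr}\|q\|_{L^2(\Gamma_N\cap\partial K_i)}$ and similarly for $t_i^0$, and again sum and apply Lemma \ref{lemma:part3_t}.

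For $w^\mathup{glo}-w^m_*$ I would exploit $w^\mathup{glo}=\Rglo\varphi_*$ and $w^m_*=\Rmm\varphi_*$, which casts the difference in the same framework with $\langle t_i,v\rangle=s(\pi_i\varphi_*,\pi v)$ and $\langle t_i^0,v\rangle=s(\pi_i\varphi_*(\cdot,0),\pi v)$ (locality holds because $\pi_i\varphi_*$ is supported in $K_i$ and the $K_i$ are disjoint). Since $\|\pi v\|_s\le\|v\|_\mathcal{B}$ one gets $\sum_i\|t_i\|^2\le\|\pi\varphi_*\|_s^2$ and $\sum_i\|t_i^0\|^2\le\|\pi\varphi_*(\cdot,0)\|_s^2$, so it remains to bound $\|\pi\varphi_*\|_s$ and its initial value. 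This is the crux: I would apply Lemma \ref{lemma:Cinv} to obtain, at each time $t$, a representative $\widehat\varphi_*(\cdot,t)\in V$ with $\pi\widehat\varphi_*=\pi\varphi_*$ and $\|\widehat\varphi_*\|_\mathcal{A}\le C_\mathup{inv}\|\pi\varphi_*\|_s$; testing the equation for $w^\mathup{glo}$ summed over $i$ against $\widehat\varphi_*$ gives $\|\pi\varphi_*\|_s^2=((w^\mathup{glo})_t,\widehat\varphi_*)+\mathcal{B}(w^\mathup{glo},\widehat\varphi_*)$, and then $\|\widehat\varphi_*\|_{L^2}\le Hc_\#\|\widehat\varphi_*\|_\mathcal{B}\le Hc_\#\max(C_\mathup{inv},1)\|\pi\varphi_*\|_s$ together with $\mathcal{B}(w^\mathup{glo},\widehat\varphi_*)\le\overline C\|w^\mathup{glo}\|_\mathcal{B}\|\widehat\varphi_*\|_\mathcal{B}$ (exactly as in the proof of Theorem \ref{thm:2}) yields $\|\pi\varphi_*\|_s\le\max(C_\mathup{inv},1)\big(Hc_\#\|(w^\mathup{glo})_t\|_{L^2}+\overline C\|w^\mathup{glo}\|_\mathcal{B}\big)$; testing the initial relation $\mathcal{B}(w^\mathup{glo}(\cdot,0),v)=s(\pi\varphi_*(\cdot,0),\pi v)$ against $\widehat\varphi_*(\cdot,0)$ gives likewise $\|\pi\varphi_*(\cdot,0)\|_s\le\overline C\max(C_\mathup{inv},1)\|w^\mathup{glo}(\cdot,0)\|_\mathcal{B}$. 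Squaring, integrating over $[0,T]$, and substituting into Lemma \ref{lemma:part3_t} produces the third bound.

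The continuity and trace estimates are routine bookkeeping; the step I expect to be the main obstacle is the control of $\|\pi\varphi_*\|_s$ in the third part, where the parabolic time-derivative term $(w^\mathup{glo})_t$ must be absorbed through the $Hc_\#$ factor in $\|v\|_{L^2}\le Hc_\#\|v\|_\mathcal{B}$ in combination with the inverse-type bound of Lemma \ref{lemma:Cinv}. One should also check that the pointwise-in-$t$ selection of $\widehat\varphi_*$ causes no difficulty, which it does not, since $\widehat\varphi_*$ only ever enters as a test function at a fixed time.
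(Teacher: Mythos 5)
Your proposal is correct and follows essentially the same route as the paper: both identify each corrector difference as $\sum_i(\mathcal{P}_it_i-\mathcal{P}^m_it_i)$ for the same choices of $t_i$ and $t_i^0$, bound the dual norms via the continuity estimates of Lemma \ref{lemma:overline_C} and the trace constant $C_\mathup{tr}$, handle $w^\mathup{glo}-w^m_*$ with the representative $\widehat{\varphi}_*$ from Lemma \ref{lemma:Cinv}, and feed everything into Lemma \ref{lemma:part3_t}. Your write-up is in fact slightly more careful than the paper's on the locality check and the square-and-sum step (which is Minkowski's inequality, not an equality as the paper writes).
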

\begin{proof}
    For the Dirichlet condition,
    \begin{align*}
    \left<t_i,\mathcal{D}^\mathup{glo}\widetilde{g}\right>&=\mathcal{A}_{(K_i)}(\widetilde{g},\mathcal{D}^\mathup{glo}\widetilde{g})+ (\widetilde{g}_t,\mathcal{D}^\mathup{glo}\widetilde{g})_{(K_i)}\\
        &\leq \left(\overline{C}\|\widetilde{g}\|_{\mathcal{A}(K_i)}+Hc_\#\|\widetilde{g}_t\|_{L^2(K_i)}\right)\|\mathcal{D}^\mathup{glo}\widetilde{g}\|_\mathcal{B}.
    \end{align*}
    Hence, \begin{align*}    
    \sum^N_{i=1}\|t_i\|^2&\leq \sum^N_{i=1}\left(\overline{C}\|\widetilde{g}\|_{\mathcal{A}(K_i)}+Hc_\#\|\widetilde{g}_t\|_{L^2(K_i)}\right)^2\\
    &=\left(\overline{C}\sqrt{\sum^N_{i=1}\|\widetilde{g}\|^2_{\mathcal{A}(K_i)}}+Hc_\#\sqrt{\sum^N_{i=1}\|\widetilde{g}_t\|_{L^2(K_i)}^2}\right)^2\\
    &=(\overline{C}\|\widetilde{g}\|_\mathcal{A}+Hc_\#\|\widetilde{g}_t\|_{L^2})^2.
    \end{align*}
    Similarly, 
    \[\sum^N_{i=1}\|t^0_i\|^2\leq \sum^N_{i=1}\overline{C}^2\|\widetilde{g}(\cdot,0)\|_{L^2}^2.\]
    Now for the Neumman corrector,
    \begin{align*}
        \sum^N_{i=1}\int_{\Gamma_N\cap\partial K_i}q\mathcal{N}^\mathup{glo}_i q \di \sigma 
        &\leq \sum^N_{i=1}\|q\|_{L^2(\Gamma_N\cap\partial K_i)}\|\mathcal{N}^\mathup{glo}_i q\|_{L^2(\Gamma_N)}\\
        &\leq \sum^N_{i=1} C_\mathup{tr}\|q\|_{L^2(\Gamma_N\cap\partial K_i)}\|\mathcal{N}^\mathup{glo}_i q\|_{\mathcal{A}}\\
        &\leq C_\mathup{tr}^2 \|q\|_{L^2(\Gamma_N)}^2.
    \end{align*}
    Finally, we deal with ${w}^\mathup{glo}-w^m_*$. Note that $s(\pi_i\varphi_*,\pi v)\leq \|\pi_i\varphi_*\|_s\|v\|_\mathcal{B}$ at any $t>0$ for $v\in V$. To bound $\|t_i\|$, it suffices to bound $\|\pi_i\varphi_*(\cdot,t)\|_s$.
    Akin to the time-independent case, for each $t>0$, there exists $\widehat{\varphi}_*(\cdot,t)\in V$ such that $\pi\widehat{\varphi}_*=\pi\varphi_*$ and $\|\widehat{\varphi}_*\|_\mathcal{A}\leq C_\mathup{inv}\|\pi \varphi_*\|_s$.
    Then,
    \begin{align*}
        \|\pi\varphi\|_s^2 &= (w^\mathup{glo}_t,\widehat{\varphi}_*)+\mathcal{B}(w^\mathup{glo},\widehat{\varphi}_*)\\
        &\leq \|w^\mathup{glo}_t\|_{L^2}\|\widehat{\varphi}_*\|_{L^2}+\overline{C}\|w^\mathup{glo}\|_\mathcal{B}\|\widehat{\varphi}_*\|_\mathcal{B}\\
        &\leq \left(Hc_\#\|w^\mathup{glo}_t\|_{L^2}+\overline{C}\|w^\mathup{glo}\|_\mathcal{B}\right)\|\widehat{\varphi}_*\|_\mathcal{B}\\
        &\leq \max\left(C_\mathup{inv},1\right)\left(Hc_\#\|w^\mathup{glo}_t\|_{L^2}+\overline{C}\|w^\mathup{glo}\|_\mathcal{B}\right)\|\pi{\varphi_*}\|_s.
    \end{align*}
    Hence, by assembling all the terms, one can obtain the desired result. 
\end{proof}

\begin{Corollary}
If $\widetilde{g}_{tt}$, $q_t$ and $u_{tt}$ exist in $L^2(\Omega, (0,T))$, then
\begin{align*}
        \|(\mathcal{D}^\mathup{glo}\widetilde{g}-\mathcal{D}^m \widetilde{g})_t\|_\mathcal{E}^2
        \leq \theta^{m-1}\overline{C}^2(2+\overline{C}^2 c_\star) \left[H^2 c_\#^2 (m+2)\|\widetilde{g}_t(\cdot,0)\|_{L^2}^2+\int^T_0(\|\widetilde{g}_t\|_\mathcal{A}^2+\|\widetilde{g}_{tt}\|_{L^2}^2)\right];
\end{align*} 
\begin{align*}
        \|(\mathcal{N}^\mathup{glo} q-\mathcal{N}^m q)_t\|_\mathcal{E}^2 
        \leq \theta^{m-1}C_\mathup{tr}^2(2+\overline{C}^2 c_\star) \left[H^2 c_\#^2 (m+2)\|q_t(\cdot,0)\|_{L^2(\Gamma_N)}^2+\int^T_0\|q_t\|_{L^2(\Gamma_N)}^2\right];
\end{align*}
and
\begin{align*}
        &\|({w}^\mathup{glo}-w^m_*)_t\|^2_\mathcal{E}\\
        &\leq
        \theta^{m-1}\overline{C}(2+\overline{C}^2c_\star)\max\left(C_\mathup{inv}^2,1\right)\left\{H^2c_\#^2\left[(m+2)\|{w}^\mathup{glo}_t(\cdot,0)\|_{\mathcal{B}}^2+\int^T_0\|{w}^\mathup{glo}_{tt}\|_{L^2}^2\right]+\overline{C}^2\int^T_0\|{w}^\mathup{glo}_t\|_\mathcal{B}^2\right\}.
\end{align*}
\end{Corollary}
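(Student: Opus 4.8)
The plan is to deduce the corollary from the preceding theorem by differentiating in $t$ all the defining relations involved. Since the spaces $V$, $V^m_i$, $V^\mathup{glo}_\mathup{ms}$ and $V^m_\mathup{ms}$ are independent of $t$, $\partial_t$ commutes with each Galerkin projection, so differentiating \eqref{eq:dg_time_variant_ms} and \eqref{eq:dg_time_variant} (resp.\ \eqref{eq:nq_time_variant_ms} and \eqref{eq:nq_time_variant}, resp.\ the parabolic relations defining $\Rgloi\varphi_*$ and $\Rmmi\varphi_*$) shows that $(\mathcal{D}^\mathup{glo}_i\widetilde{g})_t$ and $(\mathcal{D}^m_i\widetilde{g})_t$ (resp.\ $(\mathcal{N}^\mathup{glo}_i q)_t$ and $(\mathcal{N}^m_i q)_t$, resp.\ $(\Rgloi\varphi_*)_t$ and $(\Rmmi\varphi_*)_t$) solve exactly the abstract parabolic pair of Section~\ref{section:abstract_t}, with the evolution functional $t_i$ replaced by its time derivative $\tau_i$: $\langle\tau_i,v\rangle=(\widetilde{g}_{tt},v)_{(K_i)}+\mathcal{A}_{(K_i)}(\widetilde{g}_t,v)$ for the Dirichlet corrector, $\langle\tau_i,v\rangle=\int_{\Gamma_N\cap\partial K_i}q_t\,v\di\sigma$ for the Neumann corrector, and $\langle\tau_i,v\rangle=s(\pi_i(\varphi_*)_t,\pi v)$ for the $w$-term. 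The hypotheses $\widetilde{g}_{tt},q_t,u_{tt}\in L^2(\Omega,(0,T))$ are exactly what make these right-hand sides, and hence $\widetilde{u}_t$, $w^\mathup{glo}_t$, $w^\mathup{glo}_{tt}$ and $(\varphi_*)_t$, well defined and square integrable in space--time, as the $\mathcal{E}$-norm on the left-hand side requires.

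Next I would identify the initial datum $\tau^0_i$ of each differentiated problem by setting $t=0$ in the corresponding parabolic relation and subtracting the prescribed initial condition. For the Neumann corrector and the $w$-term the prescribed initial condition already equals the full source at $t=0$, so the $\mathcal{B}$-terms cancel and $(\mathcal{N}^\mathup{glo}_i q)_t(\cdot,0)=(\mathcal{N}^m_i q)_t(\cdot,0)=0$, $(\Rgloi\varphi_*)_t(\cdot,0)=(\Rmmi\varphi_*)_t(\cdot,0)=0$ in $L^2$; those initial contributions vanish identically. For the Dirichlet corrector only the $\mathcal{A}_{(K_i)}(\widetilde{g}(\cdot,0),\cdot)$ part of the source is matched by the initial condition, leaving $\big((\mathcal{D}^\mathup{glo}_i\widetilde{g})_t(\cdot,0),v\big)=(\widetilde{g}_t(\cdot,0),v)_{(K_i)}$ for $v\in V$ (and the same on $V^m_i$), so $(\mathcal{D}^\mathup{glo}_i\widetilde{g}-\mathcal{D}^m_i\widetilde{g})_t(\cdot,0)$ is a difference of $L^2$-projections of $\widetilde{g}_t(\cdot,0)\mathbf{1}_{K_i}$ onto the nested spaces $V^m_i\subset V$, whence $\|(\mathcal{D}^\mathup{glo}_i\widetilde{g}-\mathcal{D}^m_i\widetilde{g})_t(\cdot,0)\|_{L^2}\le\|\widetilde{g}_t(\cdot,0)\|_{L^2(K_i)}$.

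Finally I would re-run the argument of the preceding theorem on the time-differentiated systems, invoking Lemma~\ref{lemma:part3_t} with $t_i,t^0_i$ replaced by $\tau_i,\tau^0_i$ and with the initial contributions bounded as above. The evolution term $\int^T_0\sum_i\|\tau_i\|^2$ is estimated exactly as in the theorem's proof: $\sum_i\|\tau_i\|^2\le(\overline{C}\|\widetilde{g}_t\|_\mathcal{A}+Hc_\#\|\widetilde{g}_{tt}\|_{L^2})^2$ for the Dirichlet corrector (via Lemma~\ref{lemma:overline_C} and $\|v\|_{L^2}\le Hc_\#\|v\|_\mathcal{B}$), $\sum_i\|\tau_i\|^2\le C_\mathup{tr}^2\|q_t\|_{L^2(\Gamma_N)}^2$ for the Neumann corrector, and $\|\pi_i(\varphi_*)_t\|_s$ is bounded by $\max(C_\mathup{inv},1)\big(Hc_\#\|w^\mathup{glo}_{tt}\|_{L^2}+\overline{C}\|w^\mathup{glo}_t\|_\mathcal{B}\big)$ via Lemma~\ref{lemma:Cinv}, just as $\|\pi\varphi_*\|_s$ was handled for $w^\mathup{glo}$ in the time-independent case. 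Assembling these pieces yields the three displayed inequalities.

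The step I expect to cause the most trouble is the bookkeeping of the initial data, together with the parabolic regularity needed to make the differentiation rigorous. Unlike the original problems, the time-differentiated problems do not inherit a $\mathcal{B}$-elliptic initial relation — their initial data is known only through an $L^2$ identity, and in the Neumann and $w$ cases vanishes outright — so one must re-read the proofs of Lemmas~\ref{lemma:part1_t}--\ref{lemma:part3_t} and check that they stay valid when the $\|t^0_i\|$-terms are replaced by the $L^2$-bounds just derived; since those proofs only ever invoke $\|\mathcal{P}_i t_i(\cdot,0)\|_{L^2}$ (and never its $\mathcal{B}$-norm), this goes through with no new idea. Everything else is a verbatim transcription of the preceding theorem.
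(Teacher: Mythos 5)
Your proposal follows essentially the same route as the paper's proof: differentiate the defining parabolic relations in time, observe that the differentiated correctors solve the abstract problem of Section~4.2.2 with $t_i$ replaced by its time derivative, and rerun Lemmas~\ref{lemma:part1_t}--\ref{lemma:part3_t}. Where you genuinely diverge -- and improve on the paper -- is the treatment of the initial data. The paper simply asserts that the differentiated problem again carries an elliptic initial relation $\mathcal{B}(\mathcal{P}_it_i{}_t(\cdot,0),v)=\langle(t^0_i)',v\rangle$ and invokes ``the same lines of arguments''; as you correctly compute, evaluating the parabolic relation at $t=0$ and subtracting the prescribed elliptic initial condition yields only an $L^2$ identity, which vanishes for the Neumann and $w^\mathup{glo}$ terms and gives $\bigl((\mathcal{D}^\mathup{glo}_i\widetilde{g})_t(\cdot,0),v\bigr)=(\widetilde{g}_t(\cdot,0),v)_{(K_i)}$ for the Dirichlet term. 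This is the honest version of the argument, and for two of the three estimates it makes the initial contributions disappear entirely, which is strictly stronger than what is claimed.

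The one place where your patch does not obviously reproduce the stated right-hand side is the Dirichlet initial term. Your direct bound $\|(\mathcal{D}^\mathup{glo}_i\widetilde{g}-\mathcal{D}^m_i\widetilde{g})_t(\cdot,0)\|_{L^2}\le\|\widetilde{g}_t(\cdot,0)\|_{L^2(K_i)}$ enters without the factor $H^2c_\#^2$ that appears in the corollary (in the theorem that factor came from the chain $\|\mathcal{P}_it_i(\cdot,0)\|_{L^2}\le Hc_\#\|\mathcal{P}_it_i(\cdot,0)\|_\mathcal{B}\le Hc_\#\|t^0_i\|$, which is unavailable here), and, more importantly, the proof of Lemma~\ref{lemma:part2_t} obtains the $\theta^{m-1}$ decay of $\|z_i(\cdot,0)\|_{L^2(\Omega\setminus K^{m-1}_i)}$ by applying the \emph{elliptic} localization Lemma~\ref{lemma:part2} to the initial data -- an option you no longer have once the initial data is known only through an $L^2$ identity. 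Your remark that the lemmas ``only ever invoke $\|\mathcal{P}_it_i(\cdot,0)\|_{L^2}$'' is therefore too quick: the one term $\|z_i(\cdot,0)\|^2_{L^2}$ that survives in the Dirichlet case is not \emph{a priori} accompanied by a $\theta^{m-1}$ factor, and closing this requires an additional observation (e.g.\ that $(\mathcal{D}^\mathup{glo}_i\widetilde{g})_t(\cdot,0)=\widetilde{g}_t(\cdot,0)\mathbf{1}_{K_i}$ is supported in $K_i$, so the cut-off products vanish, together with a separate argument for $\|z_i(\cdot,0)\|_{L^2}$). To be fair, this defect is inherited from the paper, whose own one-sentence proof glosses over exactly the same point; your version at least makes the difficulty visible.
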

\begin{proof}
    Now, by taking derivatives with respect to time, we can have another $t_i^\prime(\cdot,t), (t^0_i)^\prime \in V^\prime$ such that 
    \begin{equation*}
        \begin{cases}
            (\Piti_{tt},v)+\mathcal{B}(\Piti_t,v)=\left<t_i^\prime,v\right>\\
            \mathcal{B}(\Piti_t(\cdot,0),v)=\left<(t^0_i)^\prime,v\right>.
        \end{cases}
    \end{equation*}
    Then, following the same lines of arguments of the abstract problems obtains the desired results. The existence of the time derivative of the corrector follows from the regularity of the boundary value functions.
\end{proof}
\begin{Corollary}
    If furthermore $C_\mathup{inv}\theta^{(m-1)/2}(m+2)^{d/2}=O(H^2)$, then
    \begin{align*}
        \|\mathcal{D}^\mathup{glo}\widetilde{g}-\mathcal{D}^m \widetilde{g}\|_\mathcal{E}&\leq O(H^2+\sqrt{T}H),\\
        \|\mathcal{N}^\mathup{glo} q-\mathcal{N}^m q\|_\mathcal{E}&\leq O(H^2+\sqrt{T}H),\\
        \|w^\mathup{glo}-w^m_*\|_\mathcal{E}&\leq O(H^2+\sqrt{T}H).
    \end{align*}
\end{Corollary}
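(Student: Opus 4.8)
The three bounds are read off from the estimates of the preceding theorem once the asymptotic orders (as $H\to0$) of all quantities occurring there are inserted; the plan has three steps.

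\emph{Orders of the structural constants.} For $H$ below a fixed threshold the inner $\max\{\cdot\}$ defining $\overline{C}$ equals $1$, so $\overline{C}=O(1)$, and likewise $c_\#$, $c_\star$, $C_\mathup{tr}$ are $O(1)$ while $\Lambda$ stays bounded below by a positive constant, so that $\theta\in(0,1)$ (Lemma~\ref{lemma:part1}) is fixed. Since $C_\mathup{inv}$ (Lemma~\ref{lemma:Cinv}) is a fixed positive constant, squaring the hypothesis $C_\mathup{inv}\theta^{(m-1)/2}(m+2)^{d/2}=O(H^2)$ gives $\theta^{(m-1)/2}(m+2)^{d/2}=O(H^2)$ and hence $\max(C_\mathup{inv}^2,1)\,\theta^{m-1}(m+2)^{\ell}=O(H^4)$ for every integer $0\le\ell\le d$; in particular $\theta^{m-1}=O(H^4)$ and $\theta^{m-1}(m+2)=O(H^4)$.

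\emph{Uniform-in-$t$ bounds on the data.} Under the regularity assumptions already in force, $\|\widetilde{g}\|_\mathcal{A}$, $\|\widetilde{g}_t\|_{L^2}$, $\|\widetilde{g}(\cdot,0)\|_{L^2}$, $\|q\|_{L^2(\Gamma_N)}$ and $\|q(\cdot,0)\|_{L^2(\Gamma_N)}$ are $O(1)$ on $[0,T]$, so their squared integrals over $[0,T]$ are $O(T)$. For $w^\mathup{glo}$ I would argue exactly as in the proof of Theorem~\ref{thm:2}: testing \eqref{eq:elliptic_projection_t} with $v=w^\mathup{glo}$ and using Lemma~\ref{lemma:overline_C} together with the uniform bounds on $\|u_0\|_\mathcal{A}$, $\|\mathcal{D}^\mathup{glo}\widetilde{g}\|_\mathcal{A}$ and $\|\mathcal{N}^\mathup{glo}q\|_\mathcal{A}$ gives $\|w^\mathup{glo}\|_\mathcal{A}=O(1)$ and $\|\pi w^\mathup{glo}\|_s=O(H^{-1})$, hence $\|w^\mathup{glo}\|_\mathcal{B}=O(H^{-1})$; the same argument at $t=0$ and, after differentiating \eqref{eq:elliptic_projection_t} in $t$, for $w^\mathup{glo}_t$, yields $\|w^\mathup{glo}(\cdot,0)\|_\mathcal{B}=O(H^{-1})$, $\|w^\mathup{glo}_t\|_{L^2}=O(1)$ and $\|w^\mathup{glo}_t\|_\mathcal{B}=O(H^{-1})$.

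\emph{Substitution.} The Dirichlet and Neumann estimates of the theorem reduce to $C\,\theta^{m-1}\big[(m+2)H^2+T\big]$, whose first summand is $O(H^6)$ (by $\theta^{m-1}(m+2)=O(H^4)$) and whose second is $O(TH^4)$, so both $\mathcal{E}$-errors are $O\big(H^3+\sqrt{T}\,H^2\big)\subset O\big(H^2+\sqrt{T}\,H\big)$. For $w^\mathup{glo}-w^m_*$ the theorem's bound becomes $C\,\theta^{m-1}\max(C_\mathup{inv}^2,1)\big[(m+2)\,\|w^\mathup{glo}(\cdot,0)\|_\mathcal{B}^2 H^2+H^2\!\int_0^T\!\|w^\mathup{glo}_t\|_{L^2}^2+\int_0^T\!\|w^\mathup{glo}\|_\mathcal{B}^2\big]$, which with the bounds above is $C\,\theta^{m-1}\max(C_\mathup{inv}^2,1)\big[(m+2)+H^2T+TH^{-2}\big]$; using $\max(C_\mathup{inv}^2,1)\theta^{m-1}(m+2)^{\ell}=O(H^4)$ the three contributions are $O(H^4)$, $O(H^6T)$ and $O(H^2T)$, so $\|w^\mathup{glo}-w^m_*\|_\mathcal{E}=O\big(H^2+\sqrt{T}\,H\big)$, and taking square roots throughout gives the stated rates. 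The step needing genuine care is the second one: securing the uniform-in-$t$, $O(H^{-1})$ control of $\|w^\mathup{glo}\|_\mathcal{B}$, $\|w^\mathup{glo}(\cdot,0)\|_\mathcal{B}$ and $\|w^\mathup{glo}_t\|_\mathcal{B}$ (the $\pi$-component being the troublesome one, exactly as in Theorem~\ref{thm:2}) and then checking that the powers $H^{-1}$, $H^{-2}$ it produces are precisely cancelled by $\theta^{m-1}(m+2)^{\ell}=O(H^4)$.
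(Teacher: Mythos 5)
Your proposal is correct and follows essentially the same route as the paper: substitute the asymptotic orders of the data into the estimates of the preceding theorem, the only nontrivial ingredient being the $O(H^{-1})$ control of $\|w^\mathup{glo}\|_\mathcal{B}$ and $\|w^\mathup{glo}(\cdot,0)\|_\mathcal{B}$ together with $\int_0^T\|w^\mathup{glo}_t\|_{L^2}^2=O(T+H^2)$, which you obtain (as the paper does) by reusing the time-independent analysis and the bounds on $\mathcal{D}^\mathup{glo}\widetilde{g}_t$ and $\mathcal{N}^\mathup{glo}q_t$. The only cosmetic difference is that you assert a pointwise-in-$t$ bound $\|w^\mathup{glo}_t\|_{L^2}=O(1)$ where the paper bounds only the time integral, but this does not change the conclusion.
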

\begin{proof}The cases for $\mathcal{D}^m\widetilde{g}$ and $\mathcal{N}^{m} q $ are clear. For $w^\mathup{glo}$,
    by utilizing equations \eqref{eq:dg_time_variant} and \eqref{eq:nq_time_variant}, with the analysis in the time-independent case, it is easy to see $\int^T_0\|w^\mathup{glo}\|_\mathcal{B}^2 \lesssim \int^T_0 \|u_0\|_\mathcal{B}^2+\int^T_0 \|\mathcal{D}^\mathup{glo}\widetilde{g}\|_\mathcal{B}^2+\int^T_0 \|\mathcal{N}^\mathup{glo}q\|_\mathcal{B}^2$ $=O(T(H^{-2}+1))$. 
    On the other hand, 
    \begin{align*}
        \int^T_0 \|\mathcal{D}^\mathup{glo}\widetilde{g}_t\|_{L^2}^2
        &\lesssim H^2 \int^T_0 \|\mathcal{D}^\mathup{glo}\widetilde{g}_t\|_{\mathcal{B}}^2\\
        &\lesssim H^2\left\{\|\mathcal{D}^\mathup{glo}\widetilde{g}_t(\cdot,0)\|_{L^2}^2 + \int^T_0\|\widetilde{g}_t\|_\mathcal{A}^2+H^2\int^T_0\|\widetilde{g}_{tt}\|_{L^2}^2\right\}\\
        &=O(H^2+TH^2 + TH^4).
    \end{align*}
    Similarly, $\int^T_0 \|\mathcal{N}^\mathup{glo}q_t\|_{L^2}^2=O(H^2+TH^2)$. Then, 
    \[
    \int^T_0 \|w^\mathup{glo}_t\|_{L^2}^2\lesssim \int^T_0 \|\partial_t u_0\|_{L^2}^2+\int^T_0 \|\mathcal{D}^\mathup{glo}\widetilde{g}_t\|_{L^2}^2+\int^T_0 \|\mathcal{N}^\mathup{glo}q_t\|_{L^2}^2=O(T+H^2+TH^2).
    \] 
    Also, by the arguments in the time-independent case, $\|w^\mathup{glo}(\cdot,0)\|_\mathcal{B}=O(H^{-1})$. Combining all the terms, we have $\|w^\mathup{glo}-w^m_*\|_\mathcal{E}=O(H^2+\sqrt{T}H)$.
\end{proof}
\subsubsection{Multiscale Solutions}

We will first bound $\int^T_0\|u-u^\mathup{ms}\|_{L^2}^2$.
\begin{lemma} Suppose $u^\mathup{ms}$ is constructed by equation \eqref{eq:time_dpdt_solution} and $u$ is the actual solution of equation \eqref{eqn:unsteadyweakBVP}. Then
\begin{align*}
    \int^T_0 \|u-u^\mathup{ms}\|_{L^2}^2
    &\leq4T\|(u-u^\mathup{ms})(\cdot,0)\|_{L^2}^2\\
    &+2H^2c_\#^2(1+T^2)
    \int^T_0\left\{\|\widetilde{\kappa}^{-1/2}(f-u_t)\|^2_{L^2}
    +\|\widetilde{\kappa}^{-1/2}(f_t-u_{tt})\|^2_{L^2}\right\}.
\end{align*}
\end{lemma}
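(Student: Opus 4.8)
The plan is to reduce everything to the spatial error $e := u - u^{\mathup{ms}} = u_0 - u^{\mathup{ms}}_0 \in V$ (the lift $\widetilde{g}$ cancels), to bridge with the elliptic projection of the previous subsection, and to estimate the remaining ``discrete'' part by a Grönwall-free energy argument; the weights $T$ and $T^2$ then come out automatically from integrating an ODE inequality once in time and then integrating its square over $[0,T]$. The starting point is a Galerkin-type orthogonality: adding the defining relations \eqref{eq:dg_time_variant_ms} and \eqref{eq:nq_time_variant_ms} to \eqref{eqn:ms_t_weakBVP} cancels every corrector contribution, so that $u^{\mathup{ms}}_0$ satisfies $(\partial_t u^{\mathup{ms}}_0,v) + \mathcal{A}(u^{\mathup{ms}}_0,v) = (f,v) + (q,v)_{\Gamma_N} - \mathcal{A}(\widetilde{g},v) - (\widetilde{g}_t,v)$ for all $v \in V^m_{\mathup{ms}}$; subtracting \eqref{eqn:unsteadyweakBVP} gives $(\partial_t e,v) + \mathcal{A}(e,v) = 0$ for all $v \in V^m_{\mathup{ms}}$ and $t \in (0,T]$ (with the analogous $L^2$-identity at $t=0$), even though $u^{\mathup{ms}}_0$ itself need not lie in $V^m_{\mathup{ms}}$.

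Next I would write $e = \rho + \phi$ with $\rho := u - \widetilde{u}$ the elliptic projection error and $\phi := \widetilde{u} - u^{\mathup{ms}}$, so that $\int_0^T \|e\|_{L^2}^2 \le 2\int_0^T \|\rho\|_{L^2}^2 + 2\int_0^T \|\phi\|_{L^2}^2$. The $\rho$-part is already available: since $\pi\rho = 0$ (Lemma \ref{lemma:orthogonal}), Lemma \ref{lemma:overline_C} and Lemma \ref{lemma:bound_t_glo} give $\|\rho\|_{L^2} \le Hc_\# \|\rho\|_{\mathcal{B}} = Hc_\# \|\rho\|_{\mathcal{A}} \le Hc_\# \Lambda^{-1/2} \|\widetilde{\kappa}^{-1/2}(f-u_t)\|_{L^2}$, and differentiating in $t$ and invoking \eqref{eq: glob_t} produces the matching bound on $\int_0^T \|\rho_t\|_{L^2}^2$ in terms of $\int_0^T \|\widetilde{\kappa}^{-1/2}(f_t-u_{tt})\|_{L^2}^2$; together these furnish the two integral terms on the right-hand side, the remaining numerical factors $\overline{C},\Lambda^{-1/2}$ being absorbed into the stated $2(1+T^2)$.

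For the discrete part $\phi$ I would test the orthogonality relation with the element of $V^m_{\mathup{ms}}$ obtained from $\phi$ by keeping the same corrector component as $u^{\mathup{ms}}_0$ --- the same device used in the proof of Theorem \ref{thm:2}, which is what turns the inadmissible $\phi$ into a genuine test function while the cross-brackets involving $\mathcal{A}(e,\cdot)$ and $(\partial_t e,\cdot)$ drop by orthogonality. One is left with $\tfrac12 \tfrac{d}{dt}\|\phi\|_{L^2}^2 + \|\phi\|_{\mathcal{A}}^2 = -(\rho_t, v) - \mathcal{A}(\rho, v) + (\text{corrector residuals})$; discarding the nonnegative $\|\phi\|_{\mathcal{A}}^2$, using $\|v\|_{L^2} \lesssim \|\phi\|_{L^2}$ and absorbing the $\mathcal{A}(\rho,\cdot)$-term, Cauchy--Schwarz reduces this to $\tfrac{d}{dt}\|\phi\|_{L^2} \le \|\rho_t\|_{L^2} + (\text{lower order})$. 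Integrating once gives $\|\phi(t)\|_{L^2} \le \|\phi(0)\|_{L^2} + \int_0^t \|\rho_t\|_{L^2}$; squaring with $2ab \le a^2+b^2$, bounding $\bigl(\int_0^t \|\rho_t\|_{L^2}\bigr)^2 \le t\int_0^t \|\rho_t\|_{L^2}^2$, and integrating over $[0,T]$ yields $\int_0^T \|\phi\|_{L^2}^2 \le 2T\|\phi(0)\|_{L^2}^2 + 2T^2 \int_0^T \|\rho_t\|_{L^2}^2$, while $\|\phi(0)\|_{L^2} \le \|(u-u^{\mathup{ms}})(\cdot,0)\|_{L^2} + \|\rho(\cdot,0)\|_{L^2}$ produces the $4T\|(u-u^{\mathup{ms}})(\cdot,0)\|_{L^2}^2$ term. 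Collecting the pieces gives the claim.

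I expect the delicate step to be precisely this test-function choice for $\phi$: because $u^{\mathup{ms}}_0 \notin V^m_{\mathup{ms}}$, $\phi$ is not admissible and one must reproduce --- and, in the time-dependent setting, also differentiate in time --- the corrector-adjustment of Theorem \ref{thm:2}, then verify that the residual brackets either cancel by orthogonality or are absorbable \emph{without} generating a Grönwall factor, which is what keeps the bound free of any $\theta^{m-1}$ and of exponential-in-$T$ growth. A secondary, purely technical point is that the argument needs $\widetilde{g}_{tt}, q_t, u_{tt} \in L^2(\Omega\times(0,T))$ to control $\rho_t$, and some care with the Young weights to land on the clean constants $2$ and $4$.
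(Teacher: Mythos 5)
Your proposal follows essentially the same route as the paper's proof: the same splitting $u-u^{\mathup{ms}}=(u-\widetilde u)+(\widetilde u-u^{\mathup{ms}})$, the same use of the elliptic-projection bounds and of \eqref{eq: glob_t} for the time derivative, and the same energy/ODE argument $\frac{d}{dt}\|\widetilde u-u^{\mathup{ms}}\|_{L^2}\le\|(u-\widetilde u)_t\|_{L^2}$ integrated once and then squared to produce the $T$ and $T^2$ weights. Your extra care about the admissibility of $\widetilde u-u^{\mathup{ms}}$ as a test function is a point the paper itself glosses over (it simply sets $v=\rho$), so the proposal is, if anything, slightly more scrupulous on that step.
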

\begin{proof}
    Let $u-u^\mathup{ms} = (u-\widetilde{u})+(\widetilde{u}-u^\mathup{ms})=: \vartheta+\rho$.
    Note that \[\|\vartheta\|_{L^2}\leq H\Lambda^{-1/2} \overline{C}c_\#\|\widetilde{\kappa}^{-1/2}(f-u_t)\|_{L^2}.\]
    For $\rho$,
    \begin{align*}
        ((\widetilde{u}-u^\mathup{ms})_t,v) + \mathcal{A}(\widetilde{u}-u^\mathup{ms},v)
        = (\widetilde{u}_t,v)+\mathcal{A}(u,v)-(f,v)=((\widetilde{u}-u)_t,v).
    \end{align*}
    Putting $v=\rho$,
    \begin{align*}
        (\rho_t,\rho)+\|\rho\|_\mathcal{A}^2 &= ((u-\widetilde{u})_t,\rho)\\
        \|\rho\|_{L^2}\frac{\partial}{\partial t}\|\rho\|_{L^2}=\frac{1}{2}\frac{\partial}{\partial t}\|\rho\|^2_{L^2}&\leq \|(u-\widetilde{u})_t\|_{L^2}\|\rho\|_{L^2}.
    \end{align*}
    Hence, for any $t>0$,
    \begin{align*}
        \|\rho(\cdot,t)\|_{L^2}\leq \|\rho(\cdot,0)\|_{L^2}+\int^t_0\|(u-\widetilde{u})_t(\cdot,s)\|_{L^2}\di s.
    \end{align*}
    All in all,
    \begin{align*}
        &\quad \int^T_0\|u-u^\mathup{ms}\|_{L^2}^2
        =2 \int^T_0 \|\vartheta(\cdot,t)\|_{L^2}^2+\|\rho(\cdot,t)\|_{L^2}^2
        \\
        &\leq 2H^2\Lambda^{-1}\overline{C}^2c_\#^2
        \int^T_0 \|\widetilde{\kappa}^{-1/2}(f-u_t)\|_{L^2}^2
        +2\int^T_0 \left\{\|\rho(\cdot,0)\|_{L^2}+\int^t_0\|(u-\widetilde{u})_t(\cdot,s)\|\di s\right\}^2 \di t\\
        &\leq2H^2\Lambda^{-1}\overline{C}^2c_\#^2
        \int^T_0 \|\widetilde{\kappa}^{-1/2}(f-u_t)\|_{L^2}^2
        +4\int^T_0\left[\|\rho(\cdot,0)\|_{L^2}^2
        +\int^t_0 t\|(u-\widetilde{u})_t(\cdot,s)\|^2 \di s \right]\di t\\
        &\leq 2H^2\Lambda^{-1}\overline{C}^2c_\#^2
        \int^T_0 \|\widetilde{\kappa}^{-1/2}(f-u_t)\|_{L^2}^2
        +4\int^T_0\|\rho(\cdot,0)\|_{L^2}^2
        +4\int^T_0 t(T-t)\|(u-\widetilde{u})_t\|^2 \di t\\
        &\leq 2H^2\Lambda^{-1}\overline{C}^2c_\#^2
        \int^T_0 \|\widetilde{\kappa}^{-1/2}(f-u_t)\|_{L^2}^2
        +4\int^T_0\|\rho(\cdot,0)\|_{L^2}^2
        +T^2\int^T_0\|(u-\widetilde{u})_t\|^2 \di t.
    \end{align*}
    The desired results are then obtained by using equation \eqref{eq: glob_t}.
\end{proof}
\begin{Corollary}
    If furthermore $C_\mathup{inv}\theta^{(m-1)/2}(m+2)^{d/2}=O(H^2)$, then
    \[
    \int^T_0\|u-u^\mathup{ms}\|^2_{L^2}\leq O((T+T^3)H^4)\text{ and }\|(u-u^\mathup{ms})(\cdot,T)\|_{L^2}=O(H^2+\sqrt{T}H^2).
    \]
\end{Corollary}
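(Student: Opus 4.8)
Both claims are obtained by pushing the two uniform estimates already at hand --- the preceding lemma for $\int_0^T\|u-u^\mathup{ms}\|_{L^2}^2$ and the identity \eqref{eq:main_t} for $\|(u-u^\mathup{ms})(\cdot,T)\|_{L^2}^2$ --- into the asymptotic regime fixed by the hypothesis $C_\mathup{inv}\theta^{(m-1)/2}(m+2)^{d/2}=O(H^2)$, treating all Sobolev norms of the data ($f,u_t,f_t,u_{tt},\widetilde g,\widetilde g_t,q,u_\mathup{init}$) as $O(1)$. The arithmetic engine is the identity $\widetilde\kappa^{-1/2}=O(H)$ noted after Lemma \ref{lemma:bound_t_glo}: together with the hypothesis it gives $\theta^{m-1}(m+2)^d=O(H^4)$ and, since $C_\mathup{inv}\ge 1$, also $\theta^{m-1}C_\mathup{inv}^2(m+2)^d=O(H^4)$, so that every factor $\theta^{(m-1)/2}$ in the corrector estimates supplies one power $H^2$ and every factor $\widetilde\kappa^{-1/2}$ supplies $H$.

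\noindent\textbf{Step 1 (the $L^2$-in-time bound).} I insert $\widetilde\kappa^{-1/2}=O(H)$ into the preceding lemma. The integral factor becomes $O(H^2)\int_0^T\{\||\bbeta|^{-1}(f-u_t)\|_{L^2}^2+\||\bbeta|^{-1}(f_t-u_{tt})\|_{L^2}^2\}=O(TH^2)$, so the data term is $O((1+T^2)H^2\cdot TH^2)=O((T+T^3)H^4)$. For the remaining summand $4T\|(u-u^\mathup{ms})(\cdot,0)\|_{L^2}^2$ I must show $\|(u-u^\mathup{ms})(\cdot,0)\|_{L^2}=O(H^2)$: at $t=0$ the pair $(w^m(\cdot,0),u^\mathup{ms}(\cdot,0))$ is produced by the time-independent construction of Section \ref{section:method_1} with data $u_\mathup{init}$ (up to the $L^2$-projection in the initial condition for $w^m$), so this is precisely the $L^2$-accuracy of the stationary CEM-GMsFEM solution; a duality argument built on Theorem \ref{thm:2} and Corollary \ref{corollary:1} yields $O(H^2)$ under the standing hypothesis. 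Hence $4T\|(u-u^\mathup{ms})(\cdot,0)\|_{L^2}^2=O(TH^4)$ is absorbed, proving $\int_0^T\|u-u^\mathup{ms}\|_{L^2}^2=O((T+T^3)H^4)$.

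\noindent\textbf{Step 2 (the $L^2$-at-$T$ bound).} I feed Step 1 into the term $(iv)$ of \eqref{eq:main_t}. Writing $v=w^m_*-\mathcal{D}^m\widetilde g+\mathcal{N}^m q$ and using the decomposition $u-v=(u-\widetilde u)+(w^\mathup{glo}-w^m_*)+(\mathcal{D}^\mathup{glo}\widetilde g-\mathcal{D}^m\widetilde g)+(\mathcal{N}^\mathup{glo}q-\mathcal{N}^m q)$ introduced just before this corollary, I bound $\int_0^T\|(u-v)_t\|_{L^2}^2$ by combining the Corollary on the time derivatives of the three corrector differences (each $\mathcal{E}$-bounded by $O(H^2+\sqrt T H)$, hence $\int_0^T\|\cdot\|_{L^2}^2\le H^2c_\#^2\int_0^T\|\cdot\|_{\mathcal B}^2\le H^2\|\cdot\|_{\mathcal E}^2=O(TH^4)$) with $\|(u-\widetilde u)_t\|_{L^2}=O(H^2)$ from \eqref{eq: glob_t}; thus $(iv)=O(\sqrt{TH^4}\cdot\sqrt{(T+T^3)H^4})=O((T+T^2)H^4)$. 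Term $(i)=4\|(u-u^\mathup{ms})(\cdot,0)\|_{L^2}^2=O(H^4)$ by Step 1's initial estimate; term $(ii)=2\|(u-v)(\cdot,0)\|_{L^2}^2=O(H^4)$, using that same estimate together with the $t=0$ parts of the abstract corrector lemmas (e.g.\ $\|z_i(\cdot,0)\|_{L^2}^2\lesssim H^2\theta^{m-1}\|t_i^0\|^2$ inside Lemma \ref{lemma:part2_t}); and term $(iii)=4\overline{C}^2\|u-v\|_{\mathcal E}^2$ is controlled by the $\mathcal E$-norm corollary for the three corrector differences plus $\|u-\widetilde u\|_{\mathcal E}=O(H^2+\sqrt T H)$ obtained from Lemmas \ref{lemma:bound_t_glo}--\ref{lemma:part1_t} and \eqref{eq: glob_t}. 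Collecting the four terms gives $\|(u-u^\mathup{ms})(\cdot,T)\|_{L^2}^2=O((1+T)H^4)+O((T+T^2)H^4)$, and taking square roots yields $\|(u-u^\mathup{ms})(\cdot,T)\|_{L^2}=O(H^2+\sqrt T H^2)$.

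\noindent\textbf{Main difficulty.} The one genuinely non-bookkeeping point is Step 1's initial-data estimate $\|(u-u^\mathup{ms})(\cdot,0)\|_{L^2}=O(H^2)$: unlike the running time steps --- where the elliptic projection onto $V^\mathup{glo}_\mathup{ms}$ provides a clean intermediary --- at $t=0$ one has to re-run the full stationary argument (global corrector estimates, surjectivity of $\Rglo$, the duality gain from energy to $L^2$) for the particular data $u_\mathup{init}$ and confirm that the $L^2$-projection appearing in the initial condition of $w^m$ does not degrade the second-order rate. Everything else is careful tracking of the powers of $H$, $T$, $\theta^{(m-1)/2}$, and $\widetilde\kappa^{-1/2}$.
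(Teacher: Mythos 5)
Your Step 1 is fine and is essentially what the paper (tacitly) does: insert $\widetilde{\kappa}^{-1/2}=O(H)$ into the preceding lemma and bound the initial term by the stationary $L^2$ rate $\|(u-u^\mathup{ms})(\cdot,0)\|_{L^2}=O(H^2)$, which the paper itself only asserts ``using the results from the time-independent problem''; flagging the $L^2$ projection in the initial condition for $w^m$ as the delicate point is a fair observation, not a deviation.

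Step 2, however, has a genuine gap. You route the endpoint estimate through \eqref{eq:main_t}, but \eqref{eq:main_t} bounds the \emph{sum} $\|(u-u^\mathup{ms})(\cdot,T)\|_{L^2}^2+\int_0^T\|u-u^\mathup{ms}\|_\mathcal{A}^2$, so it can never give a better rate for the first summand than for the whole right-hand side. The obstruction is term $(iii)=4\overline{C}^2\|u-v\|_{\mathcal{E}}^2$: by the corrector corollary each of $\|w^\mathup{glo}-w^m_*\|_{\mathcal E}$, $\|\mathcal{D}^\mathup{glo}\widetilde g-\mathcal{D}^m\widetilde g\|_{\mathcal E}$, $\|\mathcal{N}^\mathup{glo}q-\mathcal{N}^m q\|_{\mathcal E}$ is only $O(H^2+\sqrt{T}H)$, and $\|u-\widetilde u\|_{\mathcal E}$ contains $\int_0^T\|u-\widetilde u\|_{\mathcal B}^2=O(TH^2)$ since the elliptic projection is first order in energy. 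Hence $(iii)=O\bigl((H^2+\sqrt{T}H)^2\bigr)=O(H^4+TH^2)$ --- exactly what the paper computes in the theorem that follows --- and not the $O((T+T^2)H^4)$ you write when ``collecting the four terms''. Your route therefore yields only $\|(u-u^\mathup{ms})(\cdot,T)\|_{L^2}=O(H^2+\sqrt{T+T^2}\,H)$, one full power of $H$ short of the claim.

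The endpoint bound has to come instead from the parabolic splitting already set up inside the preceding lemma's proof: write $u-u^\mathup{ms}=\vartheta+\rho$ with $\vartheta=u-\widetilde u$ and $\rho=\widetilde u-u^\mathup{ms}$. Then $\|\vartheta(\cdot,T)\|_{L^2}\leq \overline{C}Hc_\#\Lambda^{-1/2}\|\widetilde\kappa^{-1/2}(f-u_t)\|_{L^2}=O(H^2)$, while the ODE inequality $\tfrac12\tfrac{d}{dt}\|\rho\|_{L^2}^2\leq\|(u-\widetilde u)_t\|_{L^2}\|\rho\|_{L^2}$ gives
\[
\|\rho(\cdot,T)\|_{L^2}\;\leq\;\|\rho(\cdot,0)\|_{L^2}+\int_0^T\|(u-\widetilde u)_t\|_{L^2}
\;\leq\;O(H^2)+\sqrt{T}\Bigl(\int_0^T\|(u-\widetilde u)_t\|_{L^2}^2\Bigr)^{1/2},
\]
and \eqref{eq: glob_t} makes the last factor $O(H^2)$, giving $O(H^2+\sqrt{T}H^2)$. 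This avoids \eqref{eq:main_t} entirely; the data-heavy bookkeeping of your Step 2 (terms $(i)$--$(iv)$) belongs to the energy-norm theorem, not to this corollary.
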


We can now combine all the results to bound the terms in equation \eqref{eq:main_t}.

\begin{theorem}
    If $\widetilde{g}_{tt}$, $q_t$ and $u_{tt}\in L^2(\Omega, (0,T))$ exist on $(0,T)$ and $C_\mathup{inv}\theta^{(m-1)/2}(m+2)^{d/2}=O(H^2)$, then
    \begin{equation*}
        \int^T_0\|u-u^\mathup{ms}\|_\mathcal{A}^2\leq O(H^4+(T+T^2) H^2)
    \end{equation*}
\end{theorem}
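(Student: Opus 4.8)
The plan is to run the duality/energy argument already set up in the excerpt: specialize the identity \eqref{eq:main_t} to the test function $v=w^m_*-\mathcal{D}^m\widetilde{g}+\mathcal{N}^m q\in V^m_\mathup{ms}$, use the four-term splitting
\[
u-v=(u-\widetilde{u})+(w^\mathup{glo}-w^m_*)+(\mathcal{D}^\mathup{glo}\widetilde{g}-\mathcal{D}^m\widetilde{g})+(\mathcal{N}^\mathup{glo} q-\mathcal{N}^m q)
\]
singled out there, and estimate the four right-hand terms $(i)$–$(iv)$ of \eqref{eq:main_t} separately. Since the left side of \eqref{eq:main_t} dominates $\int_0^T\|u-u^\mathup{ms}\|_\mathcal{A}^2$, it suffices to show each of $(i),(ii),(iii),(iv)$ is $O(H^4+(T+T^2)H^2)$. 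Two global facts will be used repeatedly: $\widetilde{\kappa}^{-1/2}=O(H)$ and $c_\#=O(1)$, so the inequality $\|v\|_{L^2}\le Hc_\#\|v\|_\mathcal{B}$ is genuinely $O(H)$; and the hypothesis $C_\mathup{inv}\theta^{(m-1)/2}(m+2)^{d/2}=O(H^2)$ (with $C_\mathup{inv}$ bounded below by a positive constant) gives $\theta^{m-1}(m+2)^d=O(H^4)$, which is exactly what turns the exponential-decay corrector bounds of the preceding corollaries into $\|\mathcal{D}^\mathup{glo}\widetilde{g}-\mathcal{D}^m\widetilde{g}\|_\mathcal{E},\ \|\mathcal{N}^\mathup{glo} q-\mathcal{N}^m q\|_\mathcal{E},\ \|w^\mathup{glo}-w^m_*\|_\mathcal{E}=O(H^2+\sqrt{T}H)$, and likewise their time derivatives into the analogous (and, with the extra factor $H^2c_\#^2$, strictly lower-order) estimates.

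The term $(iii)=4\overline{C}^2\|u-v\|_\mathcal{E}^2$ I would handle first: the quasi-norm $\|\cdot\|_\mathcal{E}$ obeys the triangle inequality (it is an $\ell^2$-in-time aggregate of the positive-semidefinite form $\mathcal{B}$), so $\|u-v\|_\mathcal{E}$ is bounded by the sum of the four pieces. The last three are $O(H^2+\sqrt{T}H)$ as above; for the elliptic-projection piece I would use $\pi(u-\widetilde{u})=0$ (as noted in the proof of Lemma \ref{lemma:bound_t_glo}), hence $\|u-\widetilde{u}\|_\mathcal{B}=\|u-\widetilde{u}\|_\mathcal{A}$, then Lemma \ref{lemma:bound_t_glo} with $\widetilde{\kappa}^{-1/2}=O(H)$ gives $\|u-\widetilde{u}\|_\mathcal{A}=O(H)$ pointwise in $t$, and the $L^2$-bound proved right after Lemma \ref{lemma:bound_t_glo} gives $\|(u-\widetilde{u})(\cdot,T)\|_{L^2}=O(H^2)$, so $\|u-\widetilde{u}\|_\mathcal{E}^2=O(H^4+TH^2)$. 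Squaring the four-term sum and absorbing every cross term of the shape $\sqrt{T}H^3\le\tfrac12(H^4+TH^2)$ gives $(iii)=O(H^4+TH^2)$. The two $t=0$ contributions are smaller: $(i)=4\|(u-u^\mathup{ms})(\cdot,0)\|_{L^2}^2=O(H^4)$, because $u^\mathup{ms}(\cdot,0)$ is built from an $L^2$-projection onto $V^m_\mathup{ms}$ whose $L^2$-approximation is second order (the $t=0$ instance of the time-independent analysis of Section \ref{chapter:1}); and $(ii)=2\|(u-v)(\cdot,0)\|_{L^2}^2$ is bounded by applying the same four-term splitting at $t=0$, where each corrector difference at time $0$ is exactly the object estimated by the time-independent abstract problem of Section \ref{section:analysis_1}, combined with $\|\cdot\|_{L^2}\le Hc_\#\|\cdot\|_\mathcal{B}$ and the decay hypothesis, giving $(ii)=O(H^4)$.

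The remaining term $(iv)=4\big(\int_0^T\|(u-v)_t\|_{L^2}^2\big)^{1/2}\big(\int_0^T\|u-u^\mathup{ms}\|_{L^2}^2\big)^{1/2}$ is where the $T$-bookkeeping is delicate, and I expect it to be the main obstacle. For the second factor I would quote the corollary just above, $\int_0^T\|u-u^\mathup{ms}\|_{L^2}^2=O((T+T^3)H^4)$, so it is $O(\sqrt{T+T^3}\,H^2)$. For the first factor I would differentiate the four-term splitting in time: $\int_0^T\|(u-\widetilde{u})_t\|_{L^2}^2=O(TH^4)$ by \eqref{eq: glob_t} and $\widetilde{\kappa}^{-1/2}=O(H)$, while for each corrector difference $\int_0^T\|(\cdot)_t\|_{L^2}^2\le H^2c_\#^2\int_0^T\|(\cdot)_t\|_\mathcal{B}^2\le H^2c_\#^2\|(\cdot)_t\|_\mathcal{E}^2$ is, by the corollary on the time derivatives of the corrector differences and the decay hypothesis, of order $H^2\cdot O(H^4+TH^2)$, strictly below $TH^4$. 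Hence the first factor is $O(\sqrt{T}H^2)$, so $(iv)=O\big(\sqrt{T(T+T^3)}\,H^4\big)=O((T+T^2)H^4)$, which is absorbed into $(T+T^2)H^2$. Adding $(i)+(ii)+(iii)+(iv)=O(H^4+(T+T^2)H^2)$ and noting that the left side of \eqref{eq:main_t} dominates $\int_0^T\|u-u^\mathup{ms}\|_\mathcal{A}^2$ finishes the argument. Besides the $T$-dependence in $(iv)$, the only points needing care are that the initial error $(u-u^\mathup{ms})(\cdot,0)$ is genuinely $O(H^2)$ in $L^2$ (which rests on the $t=0$ version of the stationary estimate and on $u_{tt}\in L^2$) and that all triangle inequalities invoked for $\|\cdot\|_\mathcal{B}$ and $\|\cdot\|_\mathcal{E}$ are legitimate — they are, because $\mathcal{B}$ is positive semidefinite.
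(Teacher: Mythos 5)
Your proposal is correct and follows essentially the same route as the paper: the same choice of comparison function $v=w^m_*-\mathcal{D}^m\widetilde{g}+\mathcal{N}^m q\,(+\widetilde{g})$, the same four-term splitting of $u-v$, and the same term-by-term estimation of $(i)$--$(iv)$ in \eqref{eq:main_t} via the elliptic-projection lemmas, the corrector corollaries, and the $L^2$ bound $\int_0^T\|u-u^{\mathup{ms}}\|_{L^2}^2=O((T+T^3)H^4)$. The only differences are minor bookkeeping (e.g.\ you track $\|u-\widetilde{u}\|_{\mathcal{E}}^2$ as $O(H^4+TH^2)$ where the paper records $O(H^4+TH^4)$, and you drop the harmless $H^3$ contribution in the first factor of $(iv)$), none of which affects the final order.
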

\begin{proof}
    Recall from equation \eqref{eq:main_t}, using the results from the time-independent problem, we take $v=w^m_*-\mathcal{D}^m\widetilde{g}+\mathcal{N}^m q+\widetilde{g}$. Then,
    \begin{align*}
        \frac{1}{4}(i) = \|(u-u^\mathup{ms})(\cdot,0)\|_{L^2}^2
        \leq O(H^4).
    \end{align*}
    \begin{align*}
        \frac{1}{2}(ii) &= \|(u-v)(\cdot,0)\|_{L^2}^2
    \leq H^2c_\#^2 \|(u-v)(\cdot,0)\|_\mathcal{B}^2\\
        &\leq H^2c_\#^2 \left\{\|(u-\widetilde{u})(\cdot,0)\|_\mathcal{A}+\|(w^\mathup{glo}-w^m_{*})(\cdot,0)\|_\mathcal{B}\right.\\
        &+\left.\|(\mathcal{D}^\mathup{glo}\widetilde{g}-\mathcal{D}^m \widetilde{g})(\cdot,0)\|_\mathcal{B}
        +\|(\mathcal{N}^\mathup{glo} q-\mathcal{N}^m q)(\cdot,0)\|_\mathcal{B}\right\}^2\\
        &\leq O(H^4).
    \end{align*}
    \begin{align*}
        &\quad \frac{1}{4\overline{C}^2}(iii)
        =\|u-v\|_\mathcal{E}^2\\
        &\leq \left\{\|u-\widetilde{u}\|_\mathcal{E}
        +\|w^\mathup{glo}-w^m_*\|_\mathcal{E}
        +\|\mathcal{D}^\mathup{glo}\widetilde{g}-\mathcal{D}^m \widetilde{g}\|_\mathcal{E}
        +\|\mathcal{N}^\mathup{glo} q-\mathcal{N}^m q\|_\mathcal{E}\right\}^2\\
        &\leq \left\{
        O(H^2+\sqrt{T}H^2)+ O(H^2+\sqrt{T}H)
        +O(H^2+\sqrt{T}H)+O(H^2+\sqrt{T}H)
        \right\}^2\\
        &\leq O(H^4+TH^2).
    \end{align*}
    \begin{align*}
        \frac{1}{4}(iv)
        &=\sqrt{\int^T_0\|u-u^\mathup{ms}\|_{L^2}^2}\sqrt{\int^T_0\|(u-v)_t\|_{L^2}^2}\\
        &\leq O(\sqrt{T+T^3}H^2)\left\{\int^T_0\left[
        \|(u-\widetilde{u})_t\|_{L^2}
        +\|(w^\mathup{glo}-w^m_*)_t\|_{L^2}
        +\|(\mathcal{D}^\mathup{glo}\widetilde{g}-\mathcal{D}^m \widetilde{g})_t\|_{L^2}\right.\right.\\
        &\left.\left.+\|(\mathcal{N}^\mathup{glo} q-\mathcal{N}^m q)_t\|_{L^2}
        \right]^2\right\}^{1/2}\\
        &\leq O(\sqrt{T+T^3}H^2)O(H^3+\sqrt{T}H^2)\\
        &= O((T+T^2)H^4).
    \end{align*}
    Altogether,
    \begin{align*}
       \|(u-u^\mathup{ms})(\cdot,T)\|_{L^2}^2+\int^T_0\|u-u^\mathup{ms}\|_\mathcal{A}^2\leq O(H^4+(T+T^2)H^2).
    \end{align*}
\end{proof}

\subsection{Temporal Discretization}

We apply the Backward Euler method to the scheme. Explicitly, we let $\tau$ be the time step and $U^n_\mathup{ms}\coloneqq u^\mathup{ms}(t_n)$.
Then, the variational formulation \eqref{eqn:ms_t_weakBVP} becomes
\begin{equation}
    \begin{aligned}
    \left(\frac{U^n_\mathup{ms}-U^{n-1}_\mathup{ms}}{\tau},v\right) + \mathcal{A}(U^n_\mathup{ms},v) = (f(t_n),v) &\text{ for } v\in V^m_\mathup{ms},\\
    U^0_\mathup{ms} = u^0_\mathup{ms}. &
    \end{aligned}
    \label{eqn:beuler_weakBVP}
\end{equation}
\subsubsection{Backward Euler Schemes}

We compare two versions of the Backward Euler Scheme for convection diffusion, using the Dirichlet boundary condition as an illustration. The Neumann and Robin conditions follow similar treatments. Let $\tau$ be the timestep. 
\begin{enumerate}
    \item \begin{equation}
        \frac{u^{n+1}-u^n}{\tau}+ (\bbeta\cdot\nabla u^{n+1})=\nabla \cdot (A \nabla u^{n+1})
    \end{equation}
    \item \begin{equation}
        \frac{u^{n+1}-u^n}{\tau}+ (\bbeta\cdot\nabla u^{n})=\nabla \cdot (A \nabla u^{n+1}).
    \end{equation}
\end{enumerate}

Respectively, the application of the method becomes: given the multiscale solution $u^{n}$ at step $n$,
\begin{enumerate}
    \item Convection-Diffusion approach (CD-approach)
    \begin{enumerate}
        \item  Find $\mathcal{D}^m_i \widetilde{g}^{n+1}\in V^{m}_\mathup{ms}$ such that for $v\in V^{m}_\mathup{ms}$ such that
        \begin{equation*}
            \begin{aligned}
                \left(\frac{\mathcal{D}^m_i \widetilde{g}^{n+1}-\mathcal{D}^m_i \widetilde{g}^n}{\tau},v\right) + \mathcal{B}(\mathcal{D}^m_i \widetilde{g}^{n+1},v)=\mathcal{A}_{(K_i)}(\widetilde{g}^{n+1},v)+(\widetilde{g}_t^{n+1},v)_{(K_i)}.
            \end{aligned}
        \end{equation*}
        Then, set $\mathcal{D}^m \widetilde{g}^{n+1} = \sum^N_{i=1}\mathcal{D}^m_i \widetilde{g}^{n+1}$.
        \item Find $w^{n+1}\in V^{m}_\mathup{ms}$ such that for $v\in V^m_\mathup{ms}$, 
        \begin{equation*}
            \begin{aligned}
                \left(\frac{w^{n+1}-w^{n}}{\tau},v\right)+\mathcal{A}(w^{n+1},v)
                &= (f^{n+1},v)-\mathcal{A}(\widetilde{g}^{n+1},v)-(\widetilde{g}^{n+1}_t,v)\\
                &+\mathcal{A}(\mathcal{D}^m \widetilde{g}^{n+1},v)+(\mathcal{D}^m \widetilde{g}^{n+1}_t,v).
            \end{aligned}
        \end{equation*}
        \item Set $u_\mathup{ms}^{n+1}=w^{n+1}-\mathcal{D}^m \widetilde{g}^{n+1}+\widetilde{g}^{n+1}$.
    \end{enumerate}
    \item Diffusion approach (D-approach)
    \begin{enumerate}
        \item Find $\mathcal{D}^m_i \widetilde{g}^{n+1}\in V^{m}_\mathup{ms}$ such that for $v\in V^{m}_\mathup{ms}$ such that
        \begin{equation*}
            \begin{aligned}
                \left(\frac{\mathcal{D}^m_i \widetilde{g}^{n+1}-\mathcal{D}^m_i \widetilde{g}^n}{\tau},v\right) + a(\mathcal{D}^m_i \widetilde{g}^{n+1},v)+s(\pi \mathcal{D}^m_i \widetilde{g}^{n+1},\pi v)=a_{(K_i)}(\widetilde{g}^{n+1},v)+(\widetilde{g}_t^{n+1},v)_{(K_i)}.
            \end{aligned}
        \end{equation*}
        Then, set $\mathcal{D}^m \widetilde{g}^{n+1} = \sum^N_{i=1}\mathcal{D}^m_i \widetilde{g}^{n+1}$.
        \item Find $w^{n+1}\in V^{m}_\mathup{ms}$ such that for $v\in V^m_\mathup{ms}$,
        \begin{equation*}
\begin{aligned}
                \left(\frac{w^{n+1}-w^{n}}{\tau},v\right)+a(w^{n+1},v)
                &= (f^{n+1},v)-a(\widetilde{g}^{n+1},v)-(\widetilde{g}^{n+1}_t,v)\\
                &-(\bbeta\cdot\nabla u^n,v)+a(\mathcal{D}^m \widetilde{g}^{n+1},v)+(\mathcal{D}^m \widetilde{g}^{n+1}_t,v).
\end{aligned}
        \end{equation*}
        \item Set $u_\mathup{ms}^{n+1}=w^{n+1}-\mathcal{D}^m \widetilde{g}^{n+1}+\widetilde{g}^{n+1}$.
    \end{enumerate}
\end{enumerate}

\subsection{Nonlinear Convection Diffusion IBVPs}

In this subsection, we are interested in the convection diffusion with a nonlinearity term $f(u)$:
\begin{equation}
    \begin{cases}
        \partial_t u + \bbeta\cdot\nabla u = f(u) + \nabla\cdot(\boldsymbol{A}u), &\text{ in } \Omega\times[0,T],\\
        u = g, &\text{ on }\Gamma_D\times[0,T],\\
        b u + \bnu\cdot(\boldsymbol{A}\nabla u -\bbeta u) = q, &\text{ on }\Gamma_N\times[0,T],\\
        u(\cdot,0) = u_\mathup{init}, &\text{ on }\Omega.
    \end{cases}
\end{equation}

The traditional approach is to perform operator splitting. It is to decompose the convection-diffusion operator into two sub-problems, each targeting one operator \cite{CONNORS2014181}. 
However, with inhomogeneous boundary conditions, some operators are not left-invariant \cite{doi:10.1137/140994204}. Therefore, a correction term $\eta_n$ needs to be introduced.
On the other hand, with the current scheme on convection diffusion equations, the convection and diffusion operators can be considered at once. In other words, our ultimate goal is to construct the solution at the next step via:
\[
u^{n+1} = (\mathcal{S}^{\tau/2}_{f-\eta_n}\circ \mathcal{S}^\tau_{CD+\eta_n}\circ \mathcal{S}^{\tau/2}_{f-\eta_n})(u^n)
\]
where $\mathcal{S}^{\tau/2}_{f-\eta_n}$ maps to the solution considering the adjusted nonlinearity term $f-\eta_n$ for a stepsize $\tau/2$ and $\mathcal{S}^\tau_{CD+\eta_n}$ maps to the solution considering the convection diffusion equation with the source term $\eta_n$. 
Also, this corrector term $\eta_n$ is to be carefully selected and get cancelled out in the process; otherwise can be accumulated over time and become a stiff term. The problem becomes even more apparent with time-variant boundary conditions. In light of this, we will decompose the nonlinearity into two parts, one dependent on the boundary and one independent. We will illustrate this idea using time-invariant Dirichlet boundary conditions. The time-variant version extends naturally by following a similar fashion.
The proof of this well-known method can be found in \cite{doi:10.1137/140994204,doi:10.1137/16M1056250,doi:10.1137/19M1257081}. To summarise: at each step $n$,
\begin{enumerate}
    \item Define $z\in L^2(\Omega)$ such that $\left.z\right|_{\partial \Omega}=g(\cdot,t_n)$.
    \item Find $w(\frac{\tau}{2})$ by \[
    \begin{cases}
        \partial_t w = f(w+z) - f(z)\\
        w(0) = u^n-z.
    \end{cases}
    \]
    \item Find $v(\tau)$ by \[
    \begin{cases}
        \partial_t v +\mathcal{A}v = f(z) - \mathcal{A}z -\partial_t z\\
        v(0) = w(\frac{\tau}{2}).
    \end{cases}
    \]
    
    \item Find $w(\frac{\tau}{2})$ by \[
    \begin{cases}
        \partial_t w = f(w+z) - f(z)\\
        w(0) = v(\tau).
    \end{cases}
    \]
    \item Define $z\in L^2(\Omega)$ such that $\left.z\right|_{\partial \Omega}=g(\cdot,t_{n+1})$.
    \item Define $u^{n+1} = w(\frac{\tau}{2}) + z$.
\end{enumerate}

Above, the introduction of the corrector $f(z)$ is independent of the solution in Step 3. Therefore, $f(z)-\mathcal{A}z-\partial_t z$ can be treated as a source term of a convection diffusion equation, which has been discussed already.
We note that for the first step, one has the flexibility to determine the interior value of $z$.
Aligned with our previous direction, we will use our choice of $\mathcal{D}^m\widetilde{g}-\widetilde{g}$ and $\mathcal{N}^m q$ for $z$, catering to the corresponding boundary conditions.

\subsection{Numerical Results}

We demonstrate the numerical experiments for time-invariant and time-variant boundary conditions. $\Omega$, $\kappa$, and $\bbeta$ are taken as in section 3. For simplicity, we assume the source term vanishes $f=0$. We compare the CD-approach and D-approach. Experiments will be run on the coarse mesh $H=\frac{1}{10},\frac{1}{20}$ and $\frac{1}{40}$ with the number of eigenfunctions $l_m=3$ and a fixed timestep $\tau = \frac{1}{10}$. Reference solutions are generated on a $200\times200$ mesh with $1000$ timesteps with the bilinear Lagrange finite element method. Without further mentioning, the error terms are recorded at the final time $T=1$.

\subsubsection{Time-invariant Boundary Conditions}

We first consider time-invariant boundary conditions:
\begin{equation}
    \begin{cases}
        u_t - \nabla\cdot(\boldsymbol{A}\nabla u) + \bbeta\cdot\nabla u = 0 & \text{ in } [0,1]^2 \times (0,1]\\
        u(x,t) = x_1^2+e^{x_1x_2} &\text{ for } (\partial [0,1]^2 )\times (0,1]\\
        u(x,0) = x_1^2 + e^{x_1 x_2} &\text{ on } [0,1]^2.
    \end{cases}
\end{equation}

Since the boundary condition is independent of time, $\mathcal{D}^m \widetilde{g}_t$ and $\widetilde{g}_t$ vanish in the numerical method. The error analysis followed directly from the time-independent problem in section 3.
\begin{table}[ht]
\centering
\begin{tabular}{c|ccccccc}
\hline\hline
 & \multicolumn{1}{c}{$N_\mathup{ov}$} & \multicolumn{1}{c}{$H$} & \multicolumn{1}{c}{$\Lambda$} & \multicolumn{1}{c}{$\|\cdot\|_{L^2}$}&\multicolumn{1}{c}{$\|\cdot\|_\mathcal{A}$}&\multicolumn{1}{c}{Time (s)} 
\\
\hline 
\multirow{3}{*}{CD-approach} & $3$ &$1/10$& 2.273414&4.54E-03& 5.90E-02&1119.079\\
& $4$ &$1/20$&2.328069&5.65E-04(12.5\%) &1.99E-02(33.8\%) &1156.276\\
& $5$ &$1/40$&3.185349 &1.17E-04(20.7\%)&9.80E-03(49.3\%)&1134.435 \\\hline
\multirow{3}{*}{D-approach} & $3$ &$1/10$& 2.273423&4.53E-03& 5.90E-02&1181.700\\
& $4$ &$1/20$&2.328069&6.31E-04(13.9\%) &2.06E-02(35.0\%)&1209.790\\
& $5$ &$1/40$&3.185349 &1.68E-04(26.6\%) &1.12E-02(54.6\%) &1140.202 \\ \hline\hline
\end{tabular}
\caption{Time-invariant Boundary. Comparison of the CD- and D-approaches with varying $H$ and $N_\mathup{ov}$ and fixed $l_m=3$, $c_\mathup{flow}=0$, $\kappa_1/\kappa_0=10^4$}
\label{tab:time_invariant_0}
\end{table}
\begin{table}[ht]
\centering
\resizebox{\textwidth}{!}{
\begin{tabular*}{\textwidth}{@{\extracolsep{\fill}}c|ccccccc}
\hline\hline
 & \multicolumn{1}{c}{$N_\mathup{ov}$} & \multicolumn{1}{c}{$H$} & \multicolumn{1}{c}{$\Lambda$} & \multicolumn{1}{c}{$\|\cdot\|_{L^2}$}&\multicolumn{1}{c}{$\|\cdot\|_\mathcal{A}$}&\multicolumn{1}{c}{Time (s)} 
\\
\hline 
\multirow{3}{*}{$\begin{array}{c}
     \text{CD-} \\
     \text{approach} 
\end{array}$} & $3$ &$1/10$& 0.5430&1.89E-03& 3.71E-02&1119.07\\
& $4$ &$1/20$&0.6613&3.71E-04(19.6\%) &1.18E-02(31.8\%) &1156.27\\
& $5$ &$1/40$&1.9085 &3.74E-05(10.1\%)&3.65E-03(30.9\%)&1134.43 \\\hline
\multirow{3}{*}{$\begin{array}{c}
     \text{D-} \\
     \text{approach} 
\end{array}$} & $3$ &$1/10$& 0.5430&2.03E-02& 6.80E-02&1181.70\\
& $4$ &$1/20$&0.6613&2.50E-02 &7.06E-02&1209.79\\
& $5$ &$1/40$&1.9085 &2.48E-02 &6.93E-02 &1140.20 \\ \hline\hline
\end{tabular*}
}
\caption{Time-invariant Boundary. Comparison of the CD- and D-approaches with varying $H$ and $N_\mathup{ov}$ and fixed $l_m=3$, $c_\mathup{flow}=4$ and $\kappa_1/\kappa_0=10^4$}
\label{tab:time_invariant_4}
\end{table}

From Table \ref{tab:time_invariant_0}, both second-order convergence in $L^2$-norm and at least first-order convergence in the energy norm can be observed for both D- and CD-approaches. Similar running times are recorded for both cases. However, since the convection term is dependent on the velocity field, the CD-approach outperforms D-approach in Table \ref{tab:time_invariant_4}. Not only does it perform relatively poorly, the D-approach also fails to achieve the convergence in $\|\cdot\|_\mathcal{A}$ with respect to $H$.

\subsubsection{Time-variant Dirichlet Boundary Conditions}

Consider the following:
\begin{equation}
    \begin{cases}
        u_t - \nabla\cdot(\kappa\nabla u) + \bbeta\cdot\nabla u = 0 & \text{ in } [0,1]^2\times [0,1]\\
        u(x,t) = (x_1^2+e^{x_1 x_2})e^{-t} &\text{ for } (\partial [0,1]^2 )\times [0,1]\\
        u(x,0) = x_1^2 + e^{x_1 x_2} &\text{ on } [0,1]^2.
    \end{cases}
\end{equation}

\begin{table}[ht]
\centering
\begin{tabular}{c|ccccccc}
\hline\hline
 & \multicolumn{1}{c}{$N_\mathup{ov}$} & \multicolumn{1}{c}{$H$} & \multicolumn{1}{c}{$\Lambda$} & \multicolumn{1}{c}{$\|\cdot\|_{L^2}$}&\multicolumn{1}{c}{$\|\cdot\|_\mathcal{A}$}&\multicolumn{1}{c}{Time (s)} 
\\
\hline 
\multirow{3}{*}{CD-approach} & $7$ &$1/10$& 2.27341&1.24E+00& 9.45E-01&5031.579
\\
& $8$ &$1/20$&2.32806&6.27E-01(50.6\%)&4.00E-01(42.3\%)&5109.764
\\
& $9$ &$1/40$&3.18534 &1.70E-01(27.1\%)&9.80E-03(40.8\%)&6793.296
 \\\hline
\multirow{3}{*}{D-approach} & $7$ &$1/10$& 2.2782&7.61E+00 & 6.91E+00 &2193.394
\\
& $8$ &$1/20$&2.33011&2.98E+00(39.2\%) &1.11E+00(16.0\%)&2234.225
\\
& $9$ &$1/40$&3.19735 &3.88E-01(13.0\%) &1.80E+00(-) &2522.632
\\ \hline\hline
\end{tabular}
\caption{Time-variant Dirichlet Boundary Condition: Relative Errors for $u^\mathup{ms}$ with different number of oversampling layers $N_\mathup{ov}$ and coarse mesh $H$, fixed $\tau = 0.1$, $l_m=3$, $\kappa_1/\kappa_0=10^4$}
\label{tab:time_variant_Dirichlet}
\end{table}

 \begin{figure}[ht]
 \begin{subfigure}[b]{0.33\textwidth}
    \centering
    \includegraphics[width=\textwidth]{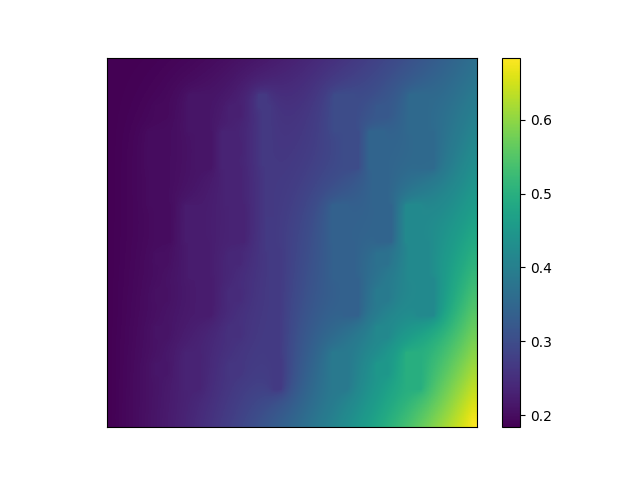}
    \caption{Reference Solution}
    \label{fig:Diri_ref_sol}
\end{subfigure}
\begin{subfigure}[b]{0.33\textwidth}
    \centering
    \includegraphics[width=\textwidth]{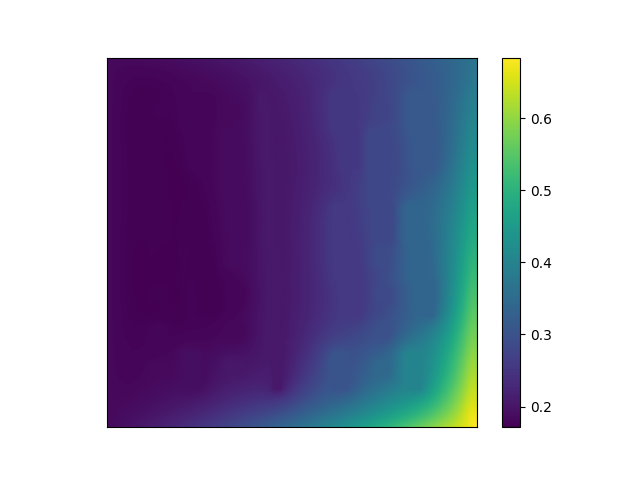}
    \caption{CD-approach}
    \label{fig:Diri_cd_sol}
\end{subfigure}
\begin{subfigure}[b]{0.33\textwidth}
    \centering
    \includegraphics[width=\textwidth]{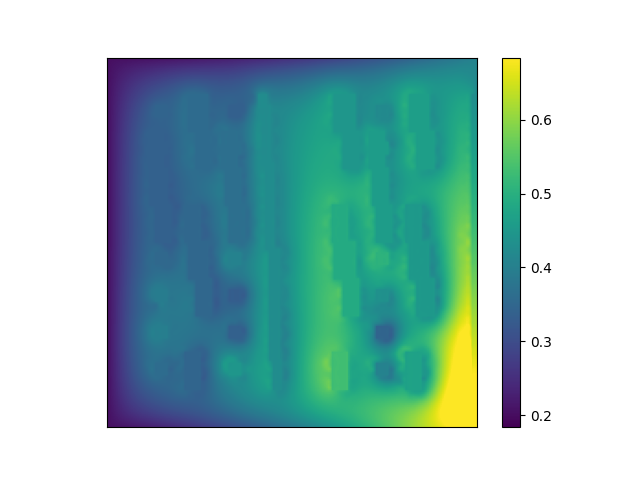}
    \caption{D-approach}
    \label{fig:Diri_d_sol}
\end{subfigure}
\caption{Solution Profiles at $T=1$ for time-variant Dirichlet IBVP, with fixed $H=\frac{1}{40}$, $l_m=3$, $\kappa_1/\kappa_0=10^4$ and $N_\mathup{ov}=9$. $\tau=\frac{1}{1000}$ for the reference solution and $\tau = \frac{1}{10}$ for the numerical solutions.}
\label{fig:diri_time_variant}
\end{figure}

As can be seen in Table \ref{tab:time_variant_Dirichlet}, the D-approach is more efficient than the CD-approach, at the cost of a higher sensitivity to the Péclet number. CD-approach is more accurate at the cost of longer computational time. Even though both present spatial convergence but the CD-approach is more robust. The time-variant case also verifies our theoretical error estimates by showing second-order (or higher) convergence in $L^2$-norm and first-order convergence in the energy norm with respect to $H$. As in Table \ref{tab:time_variant_corrector}, the increase in the number of oversampling layers does allow decay in the corrector error, and therefore verifies our theoretical convergence in $H$ as shown in Table \ref{tab:corrector_ch2}. The solution profiles can be found in Figure \ref{fig:diri_time_variant}.

\begin{table}[ht]
        \centering
        \begin{tabular}{c|ccccc}\hline\hline
             $N_\mathup{ov}$ & $2$&$3$ &$4$&$5$&$6$\\\hline
             $\|\cdot\|_\mathcal{A}$ & 3.87E-04 &1.64E-05&6.14E-06&6.13E-06&6.12E-06 \\
             $\|\cdot\|_{L^2}$ & 1.45E-03&4.37E-04&4.52E-04&4.52E-04&4.52E-04\\\hline\hline
        \end{tabular}
        \caption{Time-variant Dirichlet Boundary Condition: Relative Errors for $\mathcal{D}^m \widetilde{g}$ with different number of oversampling layers $N_\mathup{ov}$ and fixed $H=1/40$, $\tau = 0.1$, $l_m=3$, $\kappa_1/\kappa_0=10^4$}
        \label{tab:time_variant_corrector}
\end{table}

\begin{table}[ht]
    \centering
    \begin{tabular}{c|cc}\hline\hline
         $H$& $\|\cdot\|_{L^2}$ & $\|\cdot\|_{\mathcal{A}}$ \\ \hline
         $1/10$ &5.66E-03&5.48E-05\\
         $1/20$ &1.97E-03(34.8\%)&2.47E-05(45.1\%)\\
         $1/40$ &4.73E-04(24.0\%)&6.39E-06(25.9\%)\\\hline\hline
    \end{tabular}
    \caption{Time-variant Dirichlet Boundary Condition:  Errors for $\mathcal{D}^m \widetilde{g}$ with different coarse mesh $H$ and fixed $\tau = 1/10$, $\kappa_1/\kappa_0 = 10^4$, $l_m = 3$, $N_\mathup{ov} = 7$}
    \label{tab:corrector_ch2}
\end{table}
\subsubsection{Nonlinear Example}

We demonstrate combining CEM-GMsFEM with Strang Splitting to solve the following nonlinear convection-diffusion problem with a time-invariant Dirichlet boundary condition:
\begin{equation}
    \begin{cases}
        \partial_t u-\nabla \cdot(\boldsymbol{A}(x)\nabla u)+ \bbeta(x) \cdot\nabla u = u-u^3 &\text{ on }[0,1]^2\times (0,1]\\
        u = x_1^2+e^{x_1x_2} &\text{ on }(\partial [0,1]^2 )\times (0,1]\\
        u(\cdot,0) = x_1^2 + e^{x_1x_2} &\text{ on } [0,1]^2
    \end{cases}
    \label{eqn:splitting}
\end{equation}
    with $\bbeta = \bbeta_{in}$ and $c_\mathup{flow}=\frac{1}{4}$.  

 The reference solutions are generated on a $200\times 200$ mesh with $1000$ steps using the bilinear Lagrange finite element method and Backward Euler for the time discretization. Running tests on a combinations of stepsize $\tau\in \{\frac{1}{10},\frac{1}{20},\frac{1}{40}\}$, mesh size $H\in\{\frac{1}{10},\frac{1}{20},\frac{1}{40}\}$ and oversampling layers $N_\mathup{ov}\in \{7,8,9\}$. 

\begin{table}[ht]
    \centering
    \begin{tabular}{ccc|ccc}
    \hline\hline 
        $\tau$ & $N_\mathup{ov}$ & $H$ & $\Lambda$ & $\|\cdot\|_{L^2}$ & $\|\cdot\|_\mathcal{A}$\\ \hline
         $1/10$ & $7$ & $1/10$ & 2.27823 & 6.97E-02 & 7.92E-01\\
         $1/20$ & $8$ & $1/20$ & 2.33011 & 8.27E-03(11.8\%) & 1.25E-01(15.7\%) \\
         $1/40$ & $9$ & $1/40$ & 3.19735 & 4.99E-03(60.6\%) & 4.04E-02(32.3\%)\\
         \hline\hline
    \end{tabular}
    \caption{Nonlinearity case with time-invariant Dirichlet Boundary Condition, $\kappa_1/\kappa_0 = 10^4$, $l_m=3$}
    \label{tab:nonlinear}
\end{table}

As shown in Table \ref{tab:nonlinear}, the first order convergence in energy norm with respect to space and time are guaranteed. However, to achieve second order convergence in $L^2-$norm, higher oversampling layer is needed. The results could potentially be improved via adapting other temporal discretization scheme such as exponential integration \cite{POVEDA2024112796}.

\section{Conclusions}
In this paper, we propose an application of CEM-GMsFEM to solve convection diffusion equations under various types of inhomogeneous boundary conditions along with high-contrast coefficients. The method begins with constructing an auxiliary space and builds a multiscale space upon it. Boundary correctors are built upon this multiscale space by solving local oversampled cell problems. For both time independent and dependent problems, we provide convergence anaylsis and show second order convergence in $L^2$ and first order convergence in energy norm with respect to the coarse mesh, given sufficient oversampling, as agreed with numerical results. We also compare different time discretization strategies using the Backward Euler scheme. For nonlinear problems, we apply this modified method with Strang Splitting and demonstrate using a Dirichlet initial boundary value problem.

\section{Acknowledgements}
The research of Eric Chung is partially supported by the Hong Kong RGC General Research Fund (Projects: 14304021 and 14305423).

\printbibliography
\end{document}